\numberwithin{equation}{section}
  \newtheorem{thm}{Theorem}[section]
  \newtheorem{lem}[thm]{Lemma}
  \newtheorem{prop}[thm]{Proposition}
  \newtheorem{cor}[thm]{Corollary}
  \theoremstyle{definition}
  \newtheorem{defn}[thm]{Definition}
  \newtheorem{exm}[thm]{Example}
  \newtheorem{rmk}[thm]{Remark}
  \newtheorem{open}[thm]{Problem}
 \newcommand\ra{\rightarrow}
\newcommand{\lex}{\,\overrightarrow{\times}\,}
 \newcommand\s{\subseteq}
 \newcommand\B{\mathrm{B}}
\newcommand{\Ker}{\mbox{\rm Ker}}
\newcommand{\id}{\mbox{\rm Id}}
\newcommand\C{\mathrm{C}}
 \numberwithin{equation}{section}
 \def\iff{if and only if }
\let\Right\right
\let\Left\left
\def\right#1{\Right#1\@ifnextchar){\!\right}{}}
\def\left#1{\Left#1\@ifnextchar({\!\left}{}}
\begin{document}
\title[Representation and Embedding of Pseudo MV-algebras with Square Roots II]{Representation and Embedding of Pseudo MV-algebras with Square Roots II. Closures}

\author[Anatolij Dvure\v{c}enskij and Omid Zahiri]{Anatolij Dvure\v{c}enskij$^{^{1,2,3}}$, Omid Zahiri$^{^{1,*}}$}

\date{}%
\thanks{The paper acknowledges the support by the grant of
the Slovak Research and Development Agency under contract APVV-20-0069  and the grant VEGA No. 2/0142/20 SAV,  A.D}
\thanks{The project was also funded by the European Union's Horizon 2020 Research and Innovation Programme on the basis of the Grant Agreement under the Marie Sk\l odowska-Curie funding scheme No. 945478 - SASPRO 2, project 1048/01/01,  O.Z}
\thanks{* Corresponding Author: Omid Zahiri}
\address{$^1$Mathematical Institute, Slovak Academy of Sciences, \v{S}tef\'anikova 49, SK-814 73 Bratislava, Slovakia}
\address{$^2$Palack\' y University Olomouc, Faculty of Sciences, t\v r. 17. listopadu 12, CZ-771 46 Olomouc, Czech Republic}
\address{$^3$Depart. Math., Constantine the Philosopher University in Nitra, Tr. A. Hlinku 1, SK-949 01 Nitra, Slovakia}
\email{dvurecen@mat.savba.sk, zahiri@protonmail.com}
\thanks{}

\keywords{Pseudo MV-algebra, unital $\ell$-group, symmetric pseudo MV-algebra, square root, strict square root, square root closure, two-divisibility, divisibility, embedding}
\subjclass[2020]{06C15, 06D35}

%\maketitle

\begin{abstract}
In \cite{DvZa3}, we started the investigation of pseudo MV-algebras with square roots. In the present paper, the main aim is to continue to study the structure of pseudo MV-algebras with square roots focusing on their new characterizations.
The paper is divided into two parts. In the first part, we investigate the relationship between a pseudo MV-algebra with square root and its corresponding unital $\ell$-group in the scene of two-divisibility.

In the present second part, we find some conditions under which a particular class of pseudo MV-algebras can be embedded into pseudo MV-algebras with square roots. We introduce and investigate the concepts of a strict square root of a pseudo MV-algebra and a square root closure, and we compare both notions. We show that each MV-algebra has a square root closure. Finally, using the square root of individual elements of a pseudo MV-algebra, we find the greatest subalgebra of a special pseudo MV-algebra with weak square root.
\end{abstract}

\date{}

\maketitle

\section*{Introduction}

In this paper, we continue with the study from \cite{DvZa4}, where we have studied square roots on pseudo MV-algebras. Square roots on pseudo MV-algebra were generalized from ones on MV-algebras in \cite{DvZa3}. Motivation for studying square roots and strict square roots and their basic properties can be found in \cite{DvZa3} together with the necessary notions and definitions. In this part, sections, theorems, propositions, lemmas, examples, and equations are numbered in continuation of \cite{DvZa4}.

\section{Embedding and strict square root closure}%5

In this section, we  study the possibility of embedding a pseudo MV-algebra $M$ into a pseudo MV-algebra with strict square root. Moreover, we introduce a strict square root closure of a pseudo MV-algebra.

First, we recall some notes about divisible (pseudo) MV-algebras. Let $M=\Gamma(G,u)$ be a pseudo MV-algebra, where $(G,u)$ is a
unital $\ell$-group.

\subsection{Commutative case}
Every MV-algebra $M=\Gamma(G,u)$ can be embedded into a divisible one: Let $G^d$ be the group-theoretic divisible hull of the Abelian group $G$. Then $G^d$ can be converted into an $\ell$-group assuming that an element $h\in G^d$ is positive in $G^d$ if $nh\in G^+$ for some integer $n\ge 1$ (see \cite[P. 23]{Anderson}). If $G^d$ is the divisible hull of the $\ell$-group $G$, then $G^d$ is an $\ell$-group with strong unit $u$,
and we use $M^d$ to denote the MV-algebra $\Gamma(G^d,u)$.
If $M$ is a linearly ordered MV-algebra, then $M^d$ is linearly ordered, too (see \cite[Cor 1.4.26]{DiLe}, \cite[Lem IV.5.A]{Fuc}). For more information about divisible MV-algebras, we suggest to see \cite{DiLe,DiSe,DvRi,LaLe}.

\begin{thm}\cite[Prop 5.2]{DvZa2}
Every MV-algebra can be embedded in an MV-algebra with square root.
\end{thm}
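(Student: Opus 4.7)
The plan is to show that the divisible hull construction described in the paragraph preceding the theorem already does the job. Given $M=\Gamma(G,u)$, I would form the Abelian divisible $\ell$-group $G^d$ and set $M^d := \Gamma(G^d,u)$. The natural group embedding $G\hookrightarrow G^d$ preserves the order and the strong unit $u$, so it restricts to an MV-algebra embedding $M \hookrightarrow M^d$ (it automatically preserves $\oplus$, $\odot$, $\neg$, $0$ and $1$). Thus it suffices to exhibit a square root on $M^d$.

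The key point is two-divisibility. For any $a \in M^d$, since $G^d$ is divisible, the element
\[
s(a) := \tfrac{1}{2}(a+u)
\]
exists in $G^d$, and $0 \le a \le u$ gives $\tfrac{u}{2} \le s(a) \le u$, so $s(a) \in M^d$. Using the identity $b \odot b = (2b-u)\vee 0$ valid in $\Gamma(G^d,u)$, a direct computation yields
\[
s(a)\odot s(a) = \bigl((a+u)-u\bigr)\vee 0 = a.
\]
Thus $s(a)$ squares to $a$.

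It remains to check that $s$ satisfies the axioms of a square root in the sense recalled in \cite{DvZa3}, in particular that $s(a)$ is the \emph{largest} $b\in M^d$ with $b\odot b=a$. If $b\in M^d$ satisfies $b\odot b=a$, then $(2b-u)\vee 0 = a$ forces $2b-u \le a$ in $G^d$, hence $b \le \tfrac{1}{2}(a+u)=s(a)$. This gives maximality and the remaining MV-algebraic identities for the square root (e.g.\ $s(a)\oplus s(a) = a\oplus u = 1$) follow by the same type of one-line calculation in $G^d$.

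The only potential obstacle is purely bookkeeping: verifying each axiom in the definition of a square root operation (as introduced earlier in the series). None of these calculations is deep once the key element $\tfrac{1}{2}(a+u)\in G^d$ is available, because every identity reduces to an identity in the divisible $\ell$-group $G^d$ via the $\Gamma$-functor. Hence the whole proof is essentially the observation that divisibility of $G^d$ plus the formula $s(a)=\tfrac{1}{2}(a+u)$ upgrades $M^d$ from a mere divisible MV-algebra to one equipped with a square root, and $M\hookrightarrow M^d$ is the desired embedding.
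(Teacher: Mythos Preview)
The paper does not actually supply a proof of this theorem; it is quoted verbatim from \cite[Prop 5.2]{DvZa2}. Your argument is correct and is precisely the route the surrounding text sets up: the paragraph before the theorem builds the divisible hull $G^d$ and $M^d=\Gamma(G^d,u)$, and Lemma~\ref{div01}(ii) is exactly the verification that $r(x)=(x+u)/2$ is a (strict) square root whenever the ambient $\ell$-group is two-divisible and $u$ is central. So your proposal coincides with the paper's implicit argument.

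One small slip: for the second axiom you must check that $b\odot b\le a$ implies $b\le s(a)$, not merely that $s(a)$ is maximal among elements with $b\odot b=a$. Your computation already handles this: from $(2b-u)\vee 0\le a$ one still gets $2b-u\le a$ (either $2b-u\le 0\le a$ or $2b-u=(2b-u)\vee 0\le a$), hence $b\le (a+u)/2$. With that adjustment the verification of (Sq1)--(Sq2) is complete, and no further ``remaining identities'' are needed in the MV-algebra case.
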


\subsection{Non-commutative case} For the non-commutative case, the situation is different.
Holland, \cite{Ho}, showed that every $\ell$-group could be embedded in a divisible $\ell$-group. He proved that every $\ell$-group $G$ is an $\ell$-subgroup of $A(\Omega)$ for some suitable totally ordered set $\Omega$, where $A(\Omega)$ is the set of all order-preserving bijective maps on $\Omega$.
But, if $G$ has a strong unit, there is no guarantee that the divisible $\ell$-group $A(\Omega)$ has a strong unit.

On the other side, let $M=\Gamma(G,u)$ be an arbitrary pseudo MV-algebra. Take the divisible $\ell$-group $A(\Omega)$ in which $G$ can be embedded; for simplicity, we can assume $G\subseteq A(\Omega)$. Set $G(u)=\{g\in A(\Omega)\mid g\le nu$ for some integer $n\ge 1\}$. Then $G(u)=\{g\in A(\Omega)\mid |g|\le mu$ for some integer $m\ge 1\}$ and $(G(u),u)$ is a unital $\ell$-group. Let $g\in \Gamma(G(u),u)$. Given $n\ge 1$, there exists $h\in A(\Omega)$ such that $nh=g$, so that $nh\ge 0$ which implies $h\ge 0$ (see \cite[Prop 3.6]{Dar}), and $0\le h\le nh\le u$. Therefore, $h\in \Gamma(G(u),u)$, proving that $M=\Gamma(G,u)$ can be embedded into a divisible pseudo MV-algebra $\Gamma(G(u),u)$.

Using similar reasoning, it is possible to show that also $G(u)$ is divisible: Let $g\in G(u)$, there is an integer $m\ge 1$ such that $|g|\le mu$. Given $n\ge 1$, there is an element $h\in A(\Omega)$ such that $nh=g$. Then $|h|\le n|h|= |nh|=|g|\le mu$, implying $h\in G(u)$. Consequently, every unital $\ell$-group $(G,u)$ can be embedded into a divisible unital $\ell$-group. We note that there is no guarantee that $G(u)$ enjoys unique extraction of roots.

In the sequel, we find some classes of unital $\ell$-groups that can be embedded into divisible (two-divisible) unital $\ell$-groups.

The class of $\ell$-groups that can be embedded into a representable two-divisible $\ell$-group will be denoted by $\mathcal{RDG}$. The class  $\mathcal{RDG}$ contains the class of MV-algebras. This class is closed under direct products and subalgebras. In Example \ref{div001}, we will see some $\ell$-groups belonging to $\mathcal{RDG}$.

Recall that, in \cite{DvZa3}, we found some results about symmetric representing pseudo MV-algebras with
square root with the additional condition $u/2\in \C(G)$. The following lemma shows that the assumption $u/2\in \C(G)$ is superfluous. In each symmetric representing pseudo MV-algebra $M=\Gamma(G,u)$ with square root, if $u/2$ exists, it always belongs to the center of $G$.

\begin{lem}\label{div01}
Let $M=\Gamma(G,u)$ be a pseudo MV-algebra, where $(G,u)$ is a unital representable $\ell$-group.
\begin{itemize}[nolistsep]
\item[{\rm (i)}] If $M$ has a square root, then $M$ is symmetric. In addition, if $u/2$ is defined, then $u/2\in \C(G)$.

\item[{\rm (ii)}] If $G$ is two-divisible and $u\in \C(G)$, then $r:M\to M$ defined by $r(x)=(x-u)/2+u$ is a strict square root on $M$.
Moreover, $u/2\in \C(G)$.
\end{itemize}
\end{lem}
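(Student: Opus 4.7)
The plan is to handle items (i) and (ii) separately: the first reduces, via a subdirect decomposition of the representable $\ell$-group $G$, to a known fact from \cite{DvZa3} on linearly ordered algebras, while the second is a direct verification in which the only subtlety is centrality of $u/2$.

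For (i), I would start by expressing $G$ as a subdirect product $G\hookrightarrow\prod_{i\in I}G_i$ with each $G_i$ totally ordered; this is exactly the definition of representability. Each projection $\pi_i$ carries $u$ to a strong unit $u_i$ of $G_i$ (or to $0$, in which case the factor is trivial and can be discarded) and yields a pseudo MV-homomorphism $M\to M_i:=\Gamma(G_i,u_i)$, giving an embedding $M\hookrightarrow\prod_i M_i$. Because the square root is characterised by the equation $r(x)\odot r(x)=x$ and is unique in every linearly ordered pseudo MV-algebra, each $M_i$ inherits a square root from $M$. Invoking the result of \cite{DvZa3} that a linearly ordered pseudo MV-algebra with square root is symmetric, I obtain $u_i\in\C(G_i)$ for every $i$, hence $u\in\C(\prod_i G_i)\cap G=\C(G)$, so $M$ is symmetric. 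For the claim about $u/2$, fix $v\in G$ with $2v=u$ and $g\in G$; then $v':=g+v-g$ satisfies $2v'=g+u-g=u=2v$. Since two-division is unique in any totally ordered group and this uniqueness transfers to any representable $\ell$-group, $v'=v$, i.e., $v=u/2\in\C(G)$.

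For (ii), two-divisibility of $G$ supplies $v\in G$ with $2v=u$, and the uniqueness-of-two-division argument above forces $u/2=v\in\C(G)$. To verify that $r$ maps into $M$, let $x\in M$ and set $w=(x-u)/2$, so $2w=x-u\in[-u,0]$; representability of $G$ yields $w\in[-u/2,0]$, whence $r(x)=w+u\in[u/2,u]\subseteq M$. Using $u\in\C(G)$, I compute $2r(x)=2w+2u=(x-u)+2u=x+u$, which gives $r(x)\odot r(x)=(2r(x)-u)\vee 0=x$, confirming $r$ is a square root. The additional strictness axioms from \cite{DvZa3} then follow from the explicit formula for $r$ together with centrality of $u$, via routine identities.

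The main obstacle is part (i): it depends on the result of \cite{DvZa3} characterising linearly ordered pseudo MV-algebras with square root as symmetric, and on confirming that square roots descend along the projections of the subdirect representation. Once those are secured, the remaining ingredients---uniqueness of two-division in representable $\ell$-groups, well-definedness of $r$ in (ii), and the square-root and strictness computations---are straightforward.
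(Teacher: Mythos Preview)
Your overall strategy for (i) matches the paper's: decompose subdirectly into linearly ordered factors, push the square root down along the surjections, and conclude each factor is symmetric. Two points to watch. First, the reason square roots descend is that the projections are \emph{surjective} homomorphisms (\cite[Prop~3.9]{DvZa3}), not that ``the square root is characterised by $r(x)\odot r(x)=x$''---(Sq1) alone does not determine $r$. Second, ``a linearly ordered pseudo MV-algebra with square root is symmetric'' is not a ready-made theorem in \cite{DvZa3}; this is precisely what the paper establishes at this step by a short case split: in a chain with more than two elements the Boolean element $r_i(0)^-\odot r_i(0)^-$ must be $0$ or $1$; the value $1$ forces $r_i(0)=0$ and hence $M_i$ Boolean (contradiction), while the value $0$ forces $r_i$ strict, and then \cite[Thm~5.6]{DvZa3} yields symmetry. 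You should expect to reproduce this argument rather than cite it.

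Your argument for $u/2\in\C(G)$, on the other hand, is genuinely different and cleaner than the paper's: you observe that any conjugate $g+u/2-g$ also doubles to $u$ (using the just-established $u\in\C(G)$) and invoke unique extraction of roots in representable $\ell$-groups. The paper instead first treats the chain case by an order trick (if $u/2+g\le g+u/2$, add $u/2$ on the left and use $u\in\C(G)$ to get the reverse inequality) and then lifts via the subdirect embedding. Both work; yours is shorter and also serves you in (ii). For (ii) you are on the same path as the paper but skip the verification of (Sq2), which is where representability and $u\in\C(G)$ genuinely enter; the paper checks (Sq1), (Sq2), and $r(0)=u/2=r(0)^-$ explicitly, then appeals to (i) for $u/2\in\C(G)$ and to \cite[Exm~3.7(iii)]{DvZa3} to conclude strictness.
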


\begin{proof}
(i) Let $r:M\to M$ be a square root on $M$. Consider the subdirect embedding $f:M\to \prod_{i\in I}M_i$, where each $M_i$ is linearly ordered. For each $i\in I$, the onto homomorphism $\pi_i\circ f:M\to M_i$ and \cite[Prop 3.9]{DvZa3} imply that
$M_i$ has a square root $r_i$ induced by $\pi_i\circ f$ and by $r$, where $\pi_i$ is the $i$-th natural projection map.
If $|M_i|=2$, then clearly $M_i$ is symmetric. Otherwise, the only Boolean elements of $M_i$ are $0$ and $1$, so by Proposition 2.4(11)(b), $r_i(0)^-\odot r_i(0)^-=0$ or $r_i(0)^-\odot r_i(0)^-=1$. The equality
$r_i(0)^-\odot r_i(0)^-=0$ implies that $r_i(0)^-\leq r_i(0)$ consequently, $r_i(0)^-=r_i(0)$ (\cite[(4.1)]{DvZa3}) which means
$r_i$ is strict and so $M_i$ is symmetric by \cite[Thm 5.6]{DvZa3}.
If $r_i(0)^-\odot r_i(0)^-=1$, then $r_i(0)=0$. It follows from \cite[Thm 3.8]{DvZa3} that $M_i$ is a Boolean algebra which is
absurd (since $M_i$ is a chain with more than $2$ elements).
Hence, for each $i\in I$, the pseudo MV-algebra $M_i$ is symmetric; consequently, $\prod_{i\in I}M_i$ is symmetric, too.
Therefore, $M$ is symmetric.

Let $u/2$ be defined. Assume that $M$ is a chain. We claim that $u/2\in \C(G)$. Let $g\in G$. Without loss of generality, we assume that
$u/2+g\leq g+u/2$. Then $g+u=u+g=u/2+u/2+g\leq u/2+g+u/2$ and so $g+u/2\leq u/2+g$. That is, $u/2+g=g+u/2$ which means $u/2\in \C(G)$.

Now, let $M$ be representable; then so is the unital $\ell$-group $(G,u)$. Let $f:G\to \prod_{i\in I}G_i$ be a subdirect embedding where $\{(G_i,u_i)\mid i\in I\}$ is a family of linearly ordered unital $\ell$-groups.
On the other hand, each $M_i=\Gamma(G_i,u_i)$ is symmetric, where $u_i=\pi_i(f(u))$.
For each $i\in I$, we have $\pi_i(f(u/2))=(\pi_i(f(u)))/2=u_i/2$, since $\pi_i(f(u/2))+\pi_i(f(u/2))=\pi_i(f(u))$.
By the first step of the proof, $\pi_i(f(u/2))\in \C(G_i)$ for each $i\in I$.
Let $g\in G$. Then $\pi_i(f(u/2+g))=\pi_i(f(u/2))+\pi_i(f(g))=\pi_i(f(g))+\pi_i(f(u/2))$ for all $i\in I$ and so
$f(u/2+g)=f(g+u/2)$ which entails that $u/2+g=g+u/2$, since $f$ is one-to-one. Therefore, $u/2\in \C(G)$.

(ii)  Let $x,y\in M$. Then
$r(x)\odot r(x)=((x-u)/2+u-u+(x-u)/2+u)\vee 0=(x-u+u)\vee 0=x$. Also, if $y\odot y\leq x$, then
$y-u+y\leq (y-u+y)\vee 0\leq x$, so $2y-2u\leq x-u$, consequently $y-u\leq (x-u)/2$ and so $y\leq (x-u)/2+u=r(x)$ (since $G$ is representable and $u\in \C(G)$).
In addition, $r(0)=(0-u)/2+u=u/2=u-r(0)=r(0)^-$. The mapping $r$ is a weak strict square root and $r(0)=u/2$.
Now, (i) implies that $u/2\in \C(G)$ and so $r(x)=(x-u)/2+u=(x+u)/2$. Therefore, $r$ is a strict square root; see \cite[Exm 3.7(iii)]{DvZa3}.
\end{proof}

\begin{thm}\label{div1}
Let $(G;+,0)$ be an $\ell$-group belonging to $\mathcal{RDG}$ and $(H,u)$ be an Abelian linearly ordered unital $\ell$-group. The pseudo MV-algebra $M=\Gamma(H\lex G,(u,0))$ can be embedded in a pseudo MV-algebra with strict square root.
\end{thm}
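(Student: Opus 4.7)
The plan is to build a two-divisible representable unital $\ell$-group containing $H\lex G$ as a unital $\ell$-subgroup, and then apply Lemma \ref{div01}(ii) to get a strict square root on the corresponding interval.

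First, I would enlarge each factor separately. Because $(H,u)$ is Abelian, its divisible hull $H^d$ is again an Abelian linearly ordered $\ell$-group with the same strong unit $u$, and $H$ embeds into $H^d$ as a unital $\ell$-subgroup (by the commutative discussion recalled before Theorem~5.1). Because $G\in \mathcal{RDG}$, there exists a representable two-divisible $\ell$-group $G'$ together with an $\ell$-group embedding $\iota_G\colon G\hookrightarrow G'$. Now form the lexicographic product $H^d\lex G'$ with distinguished element $(u,0)$. Since $u$ is a strong unit of $H^d$, the standard check shows that $(u,0)$ is a strong unit of $H^d\lex G'$, and the map
\[
f\colon H\lex G\longrightarrow H^d\lex G',\qquad f(h,g)=(\iota_H(h),\iota_G(g)),
\]
is a unital $\ell$-group embedding (the lexicographic order is reflected because both $\iota_H$ and $\iota_G$ are order-embeddings). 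Consequently $M=\Gamma(H\lex G,(u,0))$ embeds into $N:=\Gamma(H^d\lex G',(u,0))$.

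The core of the proof is then to verify that $N$ satisfies the hypotheses of Lemma~\ref{div01}(ii). Two-divisibility of $H^d\lex G'$ is immediate from coordinatewise addition: given $(h,g)$, pick $h'\in H^d$ with $2h'=h$ (possible since $H^d$ is divisible) and $g'\in G'$ with $2g'=g$ (possible since $G'$ is two-divisible); then $2(h',g')=(h,g)$. Centrality of $(u,0)$ is even easier: $H^d$ is Abelian, so $(u,0)+(h,g)=(u+h,g)=(h+u,g)=(h,g)+(u,0)$ for every $(h,g)\in H^d\lex G'$.

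The only slightly non-routine verification is representability of $H^d\lex G'$. I would obtain it from representability of $G'$ as follows: fix a subdirect embedding $\phi\colon G'\hookrightarrow\prod_{i\in I}K_i$ with each $K_i$ linearly ordered, and define $\Psi\colon H^d\lex G'\to \prod_{i\in I}(H^d\lex K_i)$ by $\Psi(h,g)=\bigl((h,\pi_i(\phi(g)))\bigr)_{i\in I}$. A direct case analysis on the lexicographic order (splitting on whether the first coordinate is zero) shows that $\Psi$ is an $\ell$-group embedding; since each $H^d\lex K_i$ is a lexicographic product of two linearly ordered Abelian or not-necessarily-Abelian groups with the first one totally ordered, each factor is itself linearly ordered, so $H^d\lex G'$ is a subdirect product of linearly ordered $\ell$-groups, i.e.\ representable. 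Having checked the three hypotheses, Lemma~\ref{div01}(ii) supplies a strict square root on $N$, and the embedding $M\hookrightarrow N$ completes the proof. The single real obstacle is the representability argument above; everything else reduces to coordinate-by-coordinate manipulation in the lexicographic product.
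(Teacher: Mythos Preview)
Your proof is correct and follows essentially the same route as the paper's: enlarge $H$ to its divisible hull $H^d$, embed $G$ into a representable two-divisible $G'$, form the lexicographic product $H^d\lex G'$, and invoke Lemma~\ref{div01}(ii). The only difference is that you spell out the representability of $H^d\lex G'$ via the subdirect embedding $\Psi$, whereas the paper simply asserts it as clear.
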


\begin{proof}
If $|M|=2$, then $M$ is a Boolean algebra and by \cite[Thm 3.8]{DvZa3}, $\id_M$ is a square root on $M$. So, let $|M|\neq 2$.
Consider the $\ell$-group $H\lex G$.
Since $H$ is a linearly ordered Abelian $\ell$-group with strong unit $u$, there exists a linearly ordered divisible
$\ell$-group $H^d$ with the strong unit $w$ such that $(H,u)$ can be embedded in $(H^d,w)$ as a unital $\ell$-group.
Let $f:H\to H^d$ be the embedding with $f(u)=w$.
By the assumptions, a representable divisible $\ell$-group $D$ and an $\ell$-group embedding $g:G\to D$ exist. Consider the lexicographic product $H^d\lex D$.
The mapping $\alpha:H\lex G\to H^d\lex D$ sending $(x,y)$ to $(f(x),g(y))$ is an $\ell$-group embedding.
Clearly, $H^d\lex D$ is a representable two-divisible $\ell$-group with the strong unit $(w,0)$.
For each $(x,y)\in H^d\lex D$, $(x,y)/2=(x/2,y/2)$.
Also, $(w,0),(w,0)/2=(w/2,0)\in \C(H^d\lex D)$, so by Lemma \ref{div01}, $r:\Gamma(H^d\lex D)\to \Gamma(H^d\lex D)$
defined by $r(x,y)=((x,y)-(w,0))/2+(w,0)$ is a strict square root on $\Gamma(H^d\lex D)$.

On the other hand, a pseudo MV-algebra embedding $\beta:\Gamma(H\lex G,(u,0))\to \Gamma(H^d\lex D,(w,0))$ can be induced
by the unital $\ell$-group embedding $\alpha:H\lex G\to H^d\lex D$.
Therefore, $M=\Gamma(H\lex G,(u,0))$ can be embedded in a pseudo MV-algebra with square root.
In addition, by Lemma \ref{div01}, $r:\Gamma(H^d\lex D,(w,0))\to \Gamma(H^d\lex D,(w,0))$ is strict, and so $M$ can be embedded
in a pseudo MV-algebra with strict square root.
\end{proof}

As a special case of Theorem \ref{div1}, if $G$ is a representable divisible pseudo MV-algebra, then clearly
$M=\Gamma(H\lex G,(u,0))$ can be embedded in a pseudo MV-algebra with strict square root.

\begin{exm}\label{div001}
(i) Take the non-Abelian $\ell$-group $G_1:=\mathbb Z\lex\mathbb Z\lex\mathbb Z$ with a strong unit $u=(1,0,0)$, where $G_1$ is with the following operations for all $(a,b,c),(x,y,z)\in G_1$:
\begin{eqnarray*}
(a,b,c)+(x,y,z)=(a+x,b+y,c+z+ay),\ \ -(a,b,c,d)=(-a,-b,-c+ab).
\end{eqnarray*}
It is a linearly ordered and non-divisible $\ell$-group.
It can be embedded into linearly ordered divisible $\ell$-groups $G_2=\mathbb D\lex\mathbb D\lex\mathbb D$, $G_3=\mathbb Q\lex\mathbb Q\lex\mathbb Q$, and $G_4=\mathbb R\lex\mathbb R\lex\mathbb R$, respectively, where the group operations are defined analogously as for $G_1$, and $\mathbb D$ is the group of dyadic numbers in $\mathbb R$, i.e. $\mathbb D=\{m/2^n\mid  m\in \mathbb Z, n=0,1,\ldots\}$.

Therefore, $G_1$ is an element of $\mathcal{RDG}$.

(ii) The non-Abelian $\ell$-group $G_5:=\mathbb Q\lex\mathbb Q\lex\mathbb Q\lex\mathbb Q$ that was introduced in Example 4.11 is divisible and representable with a strong unit. So, it belongs to $\mathcal{RDG}$.

(iii) By Theorem \ref{div1}, if $H$ is a subgroup of $\mathbb R$, then $H\lex G_1$ and $H\lex G_2$ are elements of
$\mathcal{RDG}$.

(iv) A locally nilpotent group is a group whose finitely generated subgroups are nilpotent. An $\ell$-group is locally nilpotent if it is locally nilpotent as a group.
According to Mal'cev (see \cite[Thm 7.3.2]{KoMe}),
every locally nilpotent linearly ordered group $G$ can be embedded in a divisible locally nilpotent linearly
ordered group. Therefore, every locally nilpotent linearly ordered group $G$ belongs to $\mathcal{RDG}$. In addition, by Theorem \ref{div1}, $H\lex G\in \mathcal{RDG}$ for each Abelian unital $\ell$-group $(H,1)$.
\end{exm}

\begin{rmk}\label{div2}
Let $M=\Gamma(G,u)$ be an arbitrary linearly ordered pseudo MV-algebra.
Then $K:=\{g\in G\mid n|g|<u,~\forall n\in\mathbb N\}$ is the only maximal convex $\ell$-subgroup of $G$; see \cite[Prop 5.4, Thm 6.1]{Dvu2}. Then $I=K\cap [0,u]$ is a normal and maximal ideal of $M$. It follows that $M/I$ is an MV-algebra which is isomorphic to a subalgebra of the MV-algebra of the real unit interval $[0,1]$. We can show that $M$ is an $(H,1)$-perfect MV-algebra, where $(H,1)=\Gamma^{-1}(M/I)\cong (G/H,u/H)$.

Now, using Theorem \ref{div1}, we get that if $K\in\mathcal{RDG}$ and $M$ is a strongly $(H,1)$-perfect MV-algebra, then $M$ can be embedded in a pseudo MV-algebra with square root.
\end{rmk}

\begin{thm}\label{div4}
Let $M=\Gamma(G,u)$ be such that $G\in \mathcal{RDG}$ is a linearly ordered $\ell$-group with strong unit $u$. The following statements hold:
\begin{itemize}[nolistsep]
\item[{\rm (i)}] $G$ can be embedded into a two-divisible linearly ordered group $A$ with a strong unit. In addition, if $u\in \C(G)$, then
$u\in \C(A)$.
\item[{\rm (ii)}] If $M$ is symmetric, then $M$ can be embedded in a linearly ordered pseudo MV-algebra with strict square root.
\end{itemize}
\end{thm}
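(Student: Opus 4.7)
By the definition of $\mathcal{RDG}$, I begin by fixing an $\ell$-group embedding $\iota\colon G\hookrightarrow D$ with $D$ representable and two-divisible, together with a subdirect decomposition $D\hookrightarrow\prod_{i\in I}L_i$ whose factors $L_i$ are linearly ordered and two-divisible (two-divisibility passes to quotients). The kernels $K_i:=\ker(\pi_i\circ\iota)$ are convex normal subgroups of $G$, totally ordered by inclusion (convex subgroups of a linearly ordered group form a chain), with $\bigcap_iK_i=\{0\}$. For each $g\in G^+\setminus\{0\}$, set $S_g:=\{i:g\notin K_i\}$; convexity of the $K_i$ together with linearity of $G$ gives, for $g_1,\ldots,g_n>0$ with minimum $g^*$, that $g_j\in K_i\Rightarrow g^*\in K_i$, hence $S_{g^*}\subseteq\bigcap_j S_{g_j}$. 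So the $S_g$ satisfy the finite intersection property and extend to an ultrafilter $\mathcal U$ on $I$. The ultrapower $L:=\prod_i L_i/\mathcal U$ is linearly ordered and two-divisible (both first-order), and $\psi\colon G\to L$, $g\mapsto[\pi_i(\iota(g))]_{\mathcal U}$, is an $\ell$-group embedding (injectivity because $S_g\in\mathcal U$ for every $g\ne 0$). Finally I take $A:=\{a\in L:|a|\le n\psi(u)\text{ for some }n\in\mathbb N\}$, the convex $\ell$-subgroup of $L$ generated by $\psi(u)$: it is linearly ordered, has strong unit $\psi(u)$, contains $\psi(G)$, and is two-divisible because in the linearly ordered $L$ squares are unique, so if $2b=a\in A$ then $2|b|=|a|\le n\psi(u)$ forces $|b|\le n\psi(u)$, i.e., $b\in A$.

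For the centrality claim, assume $u\in\C(G)$, so that $\psi(u)$ commutes with every element of $\psi(G)$ in $L$. I replace $L$ with the centralizer $L':=C_L(\psi(u))$: this is an $\ell$-subgroup (lattice operations in a linearly ordered $L$ reduce to $\max$ and $\min$, preserved by $L'$), it contains $\psi(G)$, and it is two-divisible by the unique-square-root principle --- if $2l'=l\in L'$, then conjugating $2l'=l$ by $\psi(u)$ leaves it fixed, so $2(\psi(u)l'\psi(u)^{-1})=2l'$, whence $\psi(u)l'\psi(u)^{-1}=l'$. Replacing $A$ by the convex $\ell$-subgroup of $L'$ generated by $\psi(u)$ then gives $\psi(u)\in\C(A)$ automatically.

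\textbf{Part (ii).} If $M$ is symmetric, then $u-x=-x+u$ for all $x\in[0,u]$, so $u$ commutes with every element of $[-u,u]$. Since $G$ is linearly ordered with strong unit $u$, I induct on $n$: the base $g\in[0,u]$ is given, and for $g\in[u,nu]$ I write $g=u+g'$ with $g':=-u+g\in[0,(n-1)u]$, whence by the inductive hypothesis $u+g=u+u+g'=u+g'+u=g+u$. This gives $u\in\C(G)$. Applying part~(i) yields a linearly ordered two-divisible $\ell$-group $A\supseteq G$ with strong unit $u\in\C(A)$; Lemma~\ref{div01}(ii) then supplies a strict square root $r(x):=(x-u)/2+u$ on $\Gamma(A,u)$, and the inclusion $G\hookrightarrow A$ preserving $u$ induces the desired pseudo MV-algebra embedding $M=\Gamma(G,u)\hookrightarrow\Gamma(A,u)$.

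\textbf{Main obstacle.} The technical heart is part~(i), specifically constructing the linearly ordered two-divisible overgroup $L$ from the merely representable $D$: the ultrafilter argument leveraging the chain structure of kernels in the linearly ordered $G$ is the key device, and the subsequent verification that the centralizer $C_L(\psi(u))$ inherits two-divisibility rests on unique extraction of square roots in linearly ordered groups. Once (i) is secured, the reduction \emph{$M$ symmetric $\Rightarrow u\in\C(G)$} and the invocation of Lemma~\ref{div01}(ii) in part~(ii) are routine.
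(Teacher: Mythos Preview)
Your proof is correct and takes a genuinely different route from the paper's. For part~(i), the paper works \emph{inside} the given representable two-divisible overgroup $H$: it sets $G_0=G$, $G_{n+1}=G_n/2+G_n/2$, and $A=\bigcup_n G_n$, then verifies directly that $A$ is a two-divisible subgroup of $H$ with strong unit $u$, and argues that $A$ is linearly ordered via a subdirect-irreducibility argument (since $G$ is a chain, some projection in a subdirect decomposition of $A$ must be injective on $G$, and then inductively on each $G_n$). Centrality of $u$ is propagated through the $G_n$ by the same halving trick you use in part~(ii). Your approach instead passes to an ultraproduct of the linear factors of $H$, choosing the ultrafilter so that the diagonal map from $G$ stays injective (exploiting that convex subgroups of a chain are totally ordered by inclusion), so that linear order and two-divisibility come for free via {\L}o\'s; centrality is then handled by cutting down to the centralizer $C_L(\psi(u))$, whose two-divisibility follows from unique extraction of roots. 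The paper's construction is more explicit and is reused verbatim later (e.g.\ in the proof of Proposition~\ref{10.1}), whereas your ultraproduct is cleaner but non-constructive and yields a much larger ambient group. One terminological slip: $\prod_i L_i/\mathcal U$ is an ultraproduct, not an ultrapower. For part~(ii), your explicit induction showing ``$M$ symmetric $\Rightarrow u\in\C(G)$'' for linearly ordered $G$ is a welcome addition---the paper's proof invokes this step tacitly when applying the ``in addition'' clause of (i) and Lemma~\ref{div01}(ii).
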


\begin{proof}
(i) Let $H$ be a representable two-divisible $\ell$-subgroup containing $G$ as an $\ell$-subgroup.
Then it enjoys unique extraction of roots. Set
$G_0:=G$, $G_1:=G_0/2+G_0/2$ and $G_{n+1}=G_n/2+G_n/2$, where $X/2:=\{x/2\mid  x\in X\}$ for each $X\s H$.
By the assumption, $G_n$'s are well-defined.
We have
\begin{equation*}
G=G_0\s G_1\s G_2\s \cdots\s G_n\s G_{n+1}\s\cdots.
\end{equation*}
Set $A:=\bigcup_{n\in\mathbb N}G_n$.
For each $x,y\in A$ there exists $n\in \mathbb N$ such that $x,y\in G_n$ and so $x+y\in G_{n+1}\s A$ which means $A$ is a subgroup
of $H$ containing $G$ (note that $G=G_0\s A$). Clearly, $A$ is two-divisible.

(1) Since $H$ is representable, $G/n$ is a chain, where $G/n=\{x/n\mid x\in G\}$.

(2) We claim that for each $a\in A$, there exists $m\in\mathbb N$ such that $|a|\leq mu$. Indeed, for $a\in G_0$, it is clear.
If $a\in G_1$, then $a=x/2+y/2$ for some $x,y\in G_0$ and so $|a|=|x/2+y/2|\leq |x/2|+|y/2|+|x/2|+|y/2|$ (see \cite[Page 2, 1.1.3a]{Anderson}).
By (1), we can assume that $|x/2|\leq |y/2|$. It follows that $|a|\leq 4|y/2|$. If $0\leq y/2$, then $|a|\leq 2y$. Choose $m\in\mathbb N$
such that $y\leq mu$, then we have $|a|\leq 2mu$.
If $y/2\leq 0$, then $|a|\leq 4(-y/2)=-2y$. Take $m\in\mathbb N$ such that $-y\leq mu$, then we have $|a|\leq 2mu$.
Let $1\leq n$ and for each $a\in G_n$ there exists $m\in\mathbb N$ such that $|a|\leq m$. Choose $a\in G_{n+1}$.
Then $a=a_1+a_2$ where $a_1,a_2\in G_n$. By the assumption, there is $m\in\mathbb N$ such that $|a_1|,|a_2|\leq mu$.
It follows that $|a|=|a_1+a_2|\leq |a_1|+|a_2|+|a_1|\leq 3mu$.

(3) The group $A$ is linearly ordered. Since $H$ is representable, $A$ and $G$ are representable. Let $\varphi:A\to \prod_{i\in I}A_i$ be a subdirect embedding of unital $\ell$-groups, where each $A_i$ is linearly ordered. For each $i\in I$, set $B_i:=\pi_i\circ\varphi(G)$. Clearly, $\varphi|_{G}:G\to\prod_{i\in I}B_i$ is a subdirect embedding. Since $G$ is subdirectly irreducible (because $G$ is a chain), there exists $j\in I$ such that $\pi_j\circ\varphi$ is an isomorphism,
consequently, $\pi_j\circ\varphi:G\to A_j$ is one-to-one. We claim that $\pi_j\circ\varphi:A\to A_j$ is an isomorphism.
Let $x\in A$ be such that $\pi_j\circ\varphi(x)=0$. Then there exists $n\in\mathbb N$ such that $x\in G_n$. We use induction on $n$.
Clearly, if $x\in G_0=G$, then we get $x=0$. If $x\in G_1$, then $x=x_1+x_2$ for some $x_1,x_2\in G$.
\begin{eqnarray*}
0=\pi_j\circ\varphi(x)=\pi_j\circ\varphi(x_1+x_2)= \pi_j\circ\varphi(x_1)+\pi_j\circ\varphi(x_2),
\end{eqnarray*}
which implies
$\pi_j\circ\varphi(x_1)=-\pi_j\circ\varphi(x_2)$ and so $\pi_j\circ\varphi(2x_1)=\pi_j\circ\varphi(-2x_2)$.
Since $2x_1,-2x_2\in G$, we have $2x_1=-2x_2$ and so $x_1=-x_2$ which entails $x=0$.
Now, let $1\leq n$ and each $\pi_j\circ\varphi$ be one-to-one on $G_n$. As in the proof for $G_1$,
we can show that $\pi_j\circ\varphi$ is one-to-one on $G_{n+1}$. Hence $\pi_i\circ\varphi:A\to A_i$ is
an isomorphism, and consequently, $A_i$ is a chain.
By (1)--(3), $G$ can be embedded in a linearly ordered two-divisible $\ell$-group $A$ with strong unit.

Now, let $u\in \C(G)$. We will show that $u\in \C(A)$. First, we show that for each $a\in G/2$, we have $a+u=u+a$.
Otherwise, assume that $a+u<u+a$ (because $A$ is a chain), then $a+a+u\leq a+u+a$.
From $u\in \C(G)$ and $2a\in G$, it follows that $u+2a=2a+u\leq a+u+a$ and so $u+a\leq a+u$ which contradicts with the assumption.
So, $u+a=a+u$ for all $a\in G/2$ and consequently $a+u=u+a$ for all $a\in G_1$. We can easily conclude that $a+u=u+a$ for all
$a\in G_1/2$, which entails $u$ commutes with all elements of $G_2$. By a similar way, $u$ commutes with all elements of $A:=\bigcup_{n\in\mathbb N}G_n$.

(ii) By (i), there is an $\ell$-group embedding $f:G\to A$, where $A$ is a linearly ordered two-divisible $\ell$-group with a strong unit $f(u)$.
Then $\Gamma(f):\Gamma(G,u)\to\Gamma(A,f(u))$ is an embedding of pseudo MV-algebras.
By Lemma \ref{div01}, $r:\Gamma(A,f(u))\to \Gamma(A,f(u))$ defined by $r(x)=(x-f(u))/2+f(u)$ is a strict square root.
Therefore, the proof of part (ii) is completed.
\end{proof}

\begin{cor}\label{div05}
Let $M=\Gamma(G,u)$ be a symmetric representable pseudo MV-algebra such that $G\in \mathcal{RDG}$. Then $M$ can be embedded in a pseudo MV-algebra with strict square root.
\end{cor}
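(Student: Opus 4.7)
The plan is to produce a unital $\ell$-group $(A,u)$ that is representable, two-divisible, and has $u$ in its center, and that contains $(G,u)$ as a unital $\ell$-subgroup; once this is done, Lemma~\ref{div01}(ii) furnishes the strict square root $r(x)=(x-u)/2+u$ on $\Gamma(A,u)$, and the induced map $\Gamma(G,u)\hookrightarrow\Gamma(A,u)$ is the required embedding.

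First I would extract $u\in\C(G)$ from symmetry. The identity $x^-=x^\sim$ for $x\in[0,u]$ translates to $u-x=-x+u$, so $u$ commutes with $-x$, and hence with $x$, for every $x\in[0,u]$; via the Riesz-type decomposition of positive elements of a unital $\ell$-group into finite sums of pieces from $[0,u]$, together with $G=G^+-G^+$, this commutation propagates to all of $G$.

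Second, using $G\in\mathcal{RDG}$, I would fix an $\ell$-embedding $G\hookrightarrow D$ with $D$ representable and two-divisible, and reproduce the construction from the proof of Theorem~\ref{div4}(i) inside $D$: set $G_0=G$, $G_{n+1}=G_n/2+G_n/2$, and $A=\bigcup_n G_n$. Then $A$ is two-divisible by construction and representable as a sub-$\ell$-group of $D$. The linearity step used in Theorem~\ref{div4}(i)(2) to verify that $u$ is still a strong unit of $A$ can be replaced by the identity $|g/2|=|g|/2\le|g|$ (valid in any representable two-divisible $\ell$-group) combined with the universal inequality $|x+y|\le|x|+|y|+|x|$; together these bound every element of $A$ by a multiple of $u$.

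For the centrality $u\in\C(A)$, I would take a subdirect embedding $A\hookrightarrow\prod_j A_j$ into linearly ordered $\ell$-groups. In each factor $A_j$ the projected unit $u_j$ lies in the center of $\pi_j(G)$ by the first step, and the linear-order centrality argument of Theorem~\ref{div4}(i) (supposing $a+u_j<u_j+a$ for $a\in G_n/2$, adding $a$ on the left, and invoking $u_j\in\C(\pi_j(G_n))$ together with $2a\in\pi_j(G_n)$) propagates centrality inductively from $G_n$ to $G_n/2$ and hence to $G_{n+1}$; therefore $u_j\in\C(A_j)$ for every $j$, so $u\in\C(A)$. Lemma~\ref{div01}(ii) then applies and finishes the proof. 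I expect the main technical nuisance to be the bookkeeping through the iterated halvings in the non-linear ambient group, which is precisely why the argument funnels through the subdirect decomposition of $A$ into linearly ordered factors.
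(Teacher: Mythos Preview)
Your approach differs from the paper's and contains a genuine gap. The paper does \emph{not} build a single two-divisible $\ell$-group $A\supseteq G$; instead it pulls back the normal prime convex $\ell$-subgroups of the ambient $H$ to $G$, obtains a subdirect embedding of $M$ into a product of linearly ordered quotients $M/(P\cap M)$, applies Theorem~\ref{div4} (the \emph{linear} case) to each quotient separately, and then takes the product of the resulting pseudo MV-algebras with strict square roots.

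The gap in your plan is the unproved assertion that $A=\bigcup_n G_n$ is a sub-$\ell$-group of $D$. In Theorem~\ref{div4} this holds because $G$ is a chain: the paper shows (step~(3)) that $A$ is itself linearly ordered, using that $G$ is subdirectly irreducible, and a chain subgroup of an $\ell$-group is automatically an $\ell$-subgroup. For a merely representable $G$ that argument fails, and in the non-commutative setting the easy route is blocked too: since $a/2+b/2\ne(a+b)/2$ in general, $G/2$ is not a subgroup and $G_1=G/2+G/2$ is not obviously closed under $\vee,\wedge$ (contrast Theorem~\ref{div7}, where commutativity gives $D_n=G/2^n$ and lattice closure is checked directly). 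Without $A$ being an $\ell$-group you cannot form $\Gamma(A,u)$, you cannot invoke Lemma~\ref{div01}(ii), and you cannot take the subdirect decomposition of $A$ into chains that your centrality argument relies on. The paper's detour through linear quotients is exactly what sidesteps this obstruction; if you want to salvage your direct construction you would need an independent proof that $\bigcup_n G_n$ (or some canonical enlargement of it) is closed under the lattice operations of $D$, and that is not supplied.
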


\begin{proof}
Suppose that $(G,u)$ is a unital $\ell$-group and $f:G\to H$ is an $\ell$-group embedding in a representable two-divisible $\ell$-group $H$. Let $X:=X(M)$ be the set of all proper normal convex $\ell$-subgroup of $H$; we can assume that $G\subseteq H$. Then
$P\cap G$ ($P\cap M$) is a normal convex $\ell$-subgroup of $G$ (normal ideal of the pseudo MV-algebra $M$) for every $P\in X$.
Clearly, $\bigcap_{P\in X}(P\cap M)=\{0\}$.
Consider the $\ell$-group embedding $f_P:M\to \prod_{P\in X}M/(P\cap M)$.
For each $P\in X$, $g_P:G/(P\cap G)\to H/P$ is an embedding, $H/P$ is two-divisible and representable (it is a chain).
In addition, for each $P\in X$, $G/(P\cap G)$ is a chain, hence by Theorem \ref{div4}, $M/(P\cap M)$ can be embedded in a
pseudo MV-algebra $N_P$ with the strict square root $r_P$ (note that $\Gamma(G/(P\cap G),u/(P\cap G))=M/(P\cap M)$). Let $g_P:M/(P\cap M)\to N_P$ be the mentioned embedding for all $P\in X$.
Consider the pseudo MV-algebra $N:=\prod_{P\in X}N_P$ with the strict square root $r:N\to N$ defined by
$r((x_P)_{P\in X})=(r_P(x_P))_{P\in X}$.
Then the mapping $M\xrightarrow{\ \ \alpha\ \ }\prod_{P\in X}M/(P\cap M)\xrightarrow{(g_P)_{P\in X}}\prod_{P\in Y}N_P$ is an
embedding of pseudo MV-algebras.
\end{proof}

It was shown that if $M=\Gamma(G,u)$ is a symmetric pseudo MV-algebra with $G\in\mathcal{RDG}$, then $M$ can be embedded into a pseudo MV-algebra with strict square root. In other words, $M$ has a ``cover" with strict square root. Now, we find the least such ``cover" for $M$.

\begin{defn}\label{div6}
Let $M$ be a pseudo MV-algebra. A {\it strict square root closure} for $M$ is a triple $(C,r,i)$, where $C$ is a pseudo MV-algebra, $r:C\to C$ is a strict square root, and $i:M\to C$ is an embedding of pseudo MV-algebras such that if $N$ is another pseudo MV-algebra with a strict square root $s$
and $j:M\to N$ is an injective homomorphism of pseudo MV-algebras, then there exists a unique injective homomorphism $\alpha:C\to N$ satisfying the following conditions:
\begin{itemize}[nolistsep]
\item[{\rm (C1)}] $\alpha\circ i= j$;
\item[{\rm (C2)}] $\alpha(r(x))=s(\alpha(x))$ for all $x\in M$.
\end{itemize}
\end{defn}

In the last definition, condition (C2) implies that $\alpha$ is a homomorphism of pseudo MV-algebras with square root.
In addition, if $(C,r,i)$ and $(C',r',i')$ are square root closures of $M$, then $C\cong C'$. Hence, $C$ will be denoted by $\mathbf C(M)$, which is determined uniquely up to isomorphism. Sometimes, we need that $M,C,N$ belong to a special class $\mathcal V$ of pseudo MV-algebras, e.g. the class of linear pseudo MV-algebras, etc. If a pseudo MV-algebra $M$ has a strict square root closure, $M$ has to be symmetric.

\begin{lem}\label{div07}
Let $(M_1;\oplus,^-,^\sim,0,1)$ and $(M_2;\oplus,^-,^\sim,0,1)$ be pseudo MV-algebras such that $\mathbf C(M_1)$ and $\mathbf C(M_2)$ exist. The following statements hold:
\begin{itemize}[nolistsep]
\item[{\rm (i)}] $\mathbf C(M_1)\cong \mathbf C(\mathbf C(M_1))$.
\item[{\rm (ii)}] If $f:M\to M_2$ is a one-to-one homomorphism of pseudo MV-algebras and $\mathbf C(M_1)$ exists, then $\mathbf C(M_1)$ can be embedded into $\mathbf C(M_2)$.
\end{itemize}
\end{lem}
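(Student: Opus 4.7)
The plan is to unwind Definition~\ref{div6} purely diagrammatically: both parts should follow from feeding suitable data into the universal property. I do not expect any serious obstacle; the only moment requiring a small argument beyond chasing diagrams is the final bijectivity step in part~(i), where I have to note that a one-sided inverse of an injective map is automatically two-sided.

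For part~(i), I would write $\mathbf C(\mathbf C(M_1))=(D,r_D,i_D)$, with $i_D\colon\mathbf C(M_1)\to D$ the canonical embedding and $r_D$ its strict square root. The key observation is that $(\mathbf C(M_1),r_1,\mathrm{id}_{\mathbf C(M_1)})$ is admissible data in the universal property of $D$: the target is already a pseudo MV-algebra with strict square root $r_1$, and the identity is trivially an injective homomorphism. Applying the universal property produces a unique injective homomorphism $\alpha\colon D\to\mathbf C(M_1)$ with $\alpha\circ i_D=\mathrm{id}_{\mathbf C(M_1)}$ and $\alpha(r_D(x))=r_1(\alpha(x))$ for $x\in\mathbf C(M_1)$. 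From $\alpha\circ i_D=\mathrm{id}$ I read off that $\alpha$ is surjective (with $i_D$ serving as a right inverse), and since the universal property already guarantees injectivity, $\alpha$ is an isomorphism; hence $\mathbf C(\mathbf C(M_1))\cong\mathbf C(M_1)$, with $i_D=\alpha^{-1}$ giving the inverse isomorphism.

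For part~(ii), write $(C_j,r_j,\iota_j)=\mathbf C(M_j)$ for $j=1,2$, and form the composite $\iota_2\circ f\colon M_1\to C_2$, which is injective because $f$ and $\iota_2$ both are. Since $C_2$ is a pseudo MV-algebra carrying the strict square root $r_2$, the triple $(N,s,j)=(C_2,r_2,\iota_2\circ f)$ qualifies as admissible data in the universal property of $\mathbf C(M_1)$. This yields a unique injective homomorphism $\alpha\colon C_1\to C_2$ satisfying $\alpha\circ\iota_1=\iota_2\circ f$ and $\alpha(r_1(x))=r_2(\alpha(x))$ for every $x\in M_1$, which is precisely the required embedding of $\mathbf C(M_1)$ into $\mathbf C(M_2)$.
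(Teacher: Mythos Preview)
Your proposal is correct and follows essentially the same approach as the paper: both parts are obtained by feeding the obvious data into the universal property of Definition~\ref{div6}. For part~(ii) your argument is identical to the paper's; for part~(i) you even shortcut the paper slightly, since you note that $\alpha\circ i_D=\mathrm{id}_{\mathbf C(M_1)}$ already forces $\alpha$ surjective, whereas the paper first produces an auxiliary embedding $\beta\colon\mathbf C(M_1)\to\mathbf C(\mathbf C(M_1))$ from the universal property of $\mathbf C(M_1)$ and then uses uniqueness to conclude $\alpha\circ\beta=\mathrm{id}$.
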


\begin{proof}
(i) Let $i_1:M\to \mathbf C(M_1)$ and $i_2:\mathbf C(M_1)\to \mathbf C(\mathbf C(M_1))$ be the embeddings  in Definition \ref{div6} represented in the Figure \ref{fig1}.
\setlength{\unitlength}{1mm}
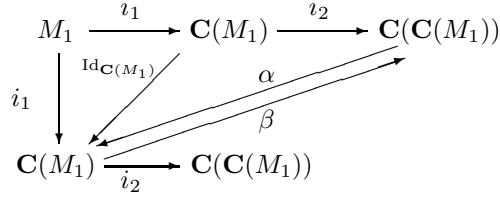
\begin{figure}[!ht]
\begin{center}
\begin{picture}(80,20)
\put(12,17){\vector(2,0){12}}
\put(37,17){\vector(2,0){12}}
\put(8,14){\vector(0,-1){12}}
\put(24,14){\vector(-1,-1){12}}
\put(5,-2){\makebox(4,2){{ $\mathbf C(M_1)$}}}
\put(31,-2){\makebox(4,2){{ $\mathbf C(\mathbf C(M_1))$}}}
\put(5,16){\makebox(4,2){{ $M_1$}}}
\put(28,16){\makebox(4,2){{ $\mathbf C(M_1)$}}}
\put(56,16){\makebox(4,2){{ $\mathbf C(\mathbf C(M_1))$}}}
\put(53,15){\vector(-3,-1){40}}
\put(14,0){\vector(3,1){40}}
\put(15,19){\makebox(4,2){{ $i_1$}}}
\put(15,-4){\makebox(4,2){{ $i_2$}}}
\put(14,-1){\vector(2,0){10}}
\put(40,19){\makebox(4,2){{ $i_2$}}}
\put(1,7){\makebox(4,2){{$i_1$}}}
\put(14,11){\makebox(4,2){{\tiny{$\id_{\mathbf C(M_1)}$}}}}
\put(33,10){\makebox(4,2){{ $\alpha$}}}
\put(33,4){\makebox(4,2){{ $\beta$}}}
\end{picture}
\caption{\label{fig1} The diagram of strict square root closures}
\end{center}
\end{figure} \\
There exist injective homomorphisms $\alpha:\mathbf C(\mathbf C(M_1))\to \mathbf C(M_1)$ and $\beta:\mathbf C(M_1)\to \mathbf C(\mathbf C(M_1))$ such that $\alpha\circ i_2=\id_{\mathbf C(M_1)}$ and
$\beta\circ i_1=i_2\circ i_1$. It follows that $\alpha\circ\beta:\mathbf C(M_1)\to \mathbf C(M_1)$ commutes the diagram, that is, $\alpha\circ\beta\circ i_1=i_1$.
On the other hand, $\id_{\mathbf C(M_1)}\circ i_1=i_1$, so $\alpha\circ\beta=\id_{\mathbf C(M_1)}$ which means $\alpha$ is onto. Therefore, $\alpha:\mathbf C(\mathbf C(M_1))\to \mathbf C(M)$ is an isomorphism.

(ii) Let $i_1:M_1\to \mathbf C(M_1)$ and $i_2:M_2\to \mathbf C(M_2)$ be the embeddings in Definition \ref{div6}.
Since $i_2\circ f:M_1\to \mathbf C(M_2)$ is an embedding, there exists a map $\alpha:\mathbf C(M_1)\to \mathbf C(M_2)$ such that
$\alpha\circ i_1=i_2\circ f$. Since $\alpha$ is one-to-one, $\mathbf C(M_1)$ can be embedded into $\mathbf C(M_2)$.
\end{proof}

\begin{thm}\label{div7}
Each MV-algebra has a strict square root closure.
\end{thm}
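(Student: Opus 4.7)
The plan is to take $M = \Gamma(G,u)$ and set $C := \Gamma(G^{(2)}, u)$, where
$G^{(2)} := \{h \in G^d \mid 2^n h \in G \text{ for some } n \geq 0\}$
is the two-divisible hull of $G$ inside its divisible hull $G^d$ from the commutative-case discussion preceding Theorem~\ref{div1}. This $G^{(2)}$ is a two-divisible Abelian $\ell$-group, and $u$ remains a strong unit in it because every $h \in G^{(2)} \subseteq G^d$ satisfies $|h| \leq mu$ for some $m \in \mathbb{N}$. I let $i : M \hookrightarrow C$ be the natural inclusion; since $G^{(2)}$ is representable, two-divisible, and $u \in \C(G^{(2)})$, Lemma~\ref{div01}(ii) supplies the strict square root $r : C \to C$ given by $r(x) = (x-u)/2 + u = (x+u)/2$.

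Next, I verify that $C$ is generated by $i(M)$ under the MV-algebra operations together with iterates of $r$; this is essential for the uniqueness of $\alpha$ below. A direct induction on $n$ gives $r^n(x) = (x + (2^n - 1)u)/2^n$, so that $u/2^n = r^n(0)^- \in C$ and $r^n(x) \ominus r^n(0) = x/2^n$ for every $x \in M$. Since $u$ is a strong unit in the Abelian $\ell$-group $G$, any $g$ with $0 \leq g \leq 2^n u$ decomposes as a sum $g = g_1 + \cdots + g_{2^n}$ with $g_i \in [0,u]$; halving each summand and reassembling via $\oplus$ (which coincides with the $\ell$-group sum because $g/2^n \leq u$) recovers the arbitrary element $g/2^n \in C$ from elements of $i(M)$ using the MV-operations and $r$.

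For the universal property, let $N = \Gamma(H, v)$ be a pseudo MV-algebra with strict square root $s$ and $j : M \to N$ an embedding. The map $j$ extends uniquely to a unital $\ell$-group homomorphism $\tilde{j} : (G,u) \to (H,v)$, and I aim to extend this further to $\tilde{\alpha} : (G^{(2)}, u) \to (H,v)$ by setting $\tilde{\alpha}(g/2^n) := \tilde{j}(g)/2^n$. The crux is to produce these halves inside $H$. Strictness of $s$ forces $s(0) + s(0) = v$ and $s(y) + s(y) = y + v$ for $y \in N$; a subdirect reduction of $H$ to its linearly ordered quotients combined with Lemma~\ref{div01}(i) places $v/2 = s(0)$ in $\C(H)$, so that $\tilde{j}(g)/2 = s(\tilde{j}(g)) - s(0)$ is a genuine half of $\tilde{j}(g)$, and torsion-freeness of $\ell$-groups makes it unique. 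Iterating this halving yields $\tilde{j}(g)/2^n \in H$ for every $n$, and passing back to the unit interval, $\alpha := \Gamma(\tilde{\alpha}) : C \to N$ is the required embedding.

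The remaining verifications are immediate: (C1) holds because $\tilde{\alpha}$ extends $\tilde{j}$; (C2) reads $\alpha(r(x)) = \alpha((x+u)/2) = (\alpha(x)+v)/2 = s(\alpha(x))$; injectivity of $\alpha$ follows from injectivity of $\tilde{j}$ and torsion-freeness of $H$; and uniqueness of $\alpha$ is handed to us by the generation result of the second paragraph. The principal obstacle sits in the third paragraph: ensuring that $v/2$ centralises the image $\tilde{j}(G)$ even when $H$ is non-commutative, so that the candidate halves of elements of $\tilde{j}(G)$ genuinely live in $H$ and are compatible with $\tilde{j}$. This is precisely where the Abelianness of $M$ and the representability forced by a strict square root (Lemma~\ref{div01}) combine to pin down the halving operation and close the argument.
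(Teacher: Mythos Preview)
Your construction of $C = \Gamma(G^{(2)}, u)$ coincides with the paper's: they set $H = \bigcup_n D_n$ with $D_0 = G$ and $D_n = D_{n-1}/2$ inside the divisible hull, which is precisely your $G^{(2)}$. The strict square root via Lemma~\ref{div01}(ii) and the extension $\tilde\alpha(g/2^n) := \tilde j(g)/2^n$ also match the paper's approach. Your generation argument for uniqueness is a useful addition that the paper leaves implicit (it resurfaces as Corollary~\ref{co:RDP}).

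The one overreach is your handling of non-commutative $N$. Lemma~\ref{div01} does \emph{not} show that a strict square root forces representability; representability is a \emph{hypothesis} there, so your ``subdirect reduction of $H$ to its linearly ordered quotients'' is unjustified for a general pseudo MV-algebra $N$, and the conclusion $v/2 \in \C(H)$ is not available. There is also a domain issue: $s$ lives only on $[0,v]$, so the expression $s(\tilde j(g))$ does not parse for arbitrary $g \in G$; and even for $y\in N$, the identity $s(y)-v+s(y)=y$ does not rearrange to $s(y)+s(y)=y+v$ without commutativity. The paper sidesteps all of this by restricting---consistently with the remark after Definition~\ref{div6} about working in a fixed class $\mathcal V$---to the case where $N$ is an MV-algebra, and then invoking Theorem~4.4 of Part~I to conclude that $\Gamma^{-1}(N)$ is already two-divisible. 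Halves $\tilde j(g)/2^n$ then exist automatically in the Abelian $\ell$-group $\Gamma^{-1}(N)$, with no need to appeal to $s$ or to centrality of $v/2$, and your map $\tilde\alpha$ goes through exactly as written.
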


\begin{proof}
Let $M=\Gamma(G,u)$ be an MV-algebra and $i:G\to D$ be an $\ell$-group embedding, where $D$ is an Abelian divisible unital $\ell$-group.
We can assume that $i$ is an inclusion map. Define $D_0=G$ and $D_n=D_{n-1}/2$ for all $n\in\mathbb N$.
Note that since $D$ is representable, $D$ enjoys unique extraction of roots.
Assume that $H:=\bigcup_{i\in\mathbb N} D_i$. Clearly, $G\s H$ and $D_{n-1}\s D_n$ for all $n\in\mathbb N$. We assert that $\Gamma(H,u)$ is a strict square closure for $M$.

(1) Let $x,y\in G$. Then $x/2+y/2=(x+y)/2$ (because $D$ is commutative) and $D_1$ is closed under $+$. Also,
$2(x/2\wedge y/2)\leq x,y$, so $2(x/2\wedge y/2)\leq x\wedge y$, that is $x/2\wedge y/2\leq (x\wedge y)/2$.
On the other hand, $x\wedge y\leq x,y$ implies that $(x\wedge y)/2\leq x/2,y/2$, so that $(x\wedge y)/2\leq x/2\wedge y/2$.
It follows that, $x/2\wedge y/2=(x\wedge y)/2$. Similarly, $x/2\vee y/2=(x\vee y)/2$. Thus, $D_1$ is closed under $\vee$ and $\wedge$.
Similarly, we can prove that $D_n$ is closed under $+$, $\vee$, and $\wedge$. Therefore, $H$ is an $\ell$-subgroup of $D$ which is
two-divisible. By Lemma \ref{div01}, $r:\Gamma(H,u)\to \Gamma(H,u)$ defined by $r(x)=(x+u)/2$ is a strict square root on $\Gamma(H,u)$
and $i:\Gamma(G,u)\to \Gamma(H,u)$ is an embedding of MV-algebras.

(2) From (1), we conclude that for each $x\in H$, there exists $n\in\mathbb N\cup \{0\}$ such that $2^{n}x\in G$. Let $n_x$ be the least element $n$ of $\mathbb N\cup \{0\}$ such that $2^{n}x\in G$.
For example, if $x\in G$, then $n_x=0$, if $x\in D_1\setminus D_0$, then $n_x=1$, and if $x\in D_n\setminus D_{n-1}$, then $n_x=n$.

(3) Let $N$ be an MV-algebra with a strict square root $s$, and $f:M\to N$ be a homomorphism of MV-algebras.
By Theorem 4.4, $(A,w):=\Gamma^{-1}(N)$ is a two-divisible unital $\ell$-group with strong unit $w$.
Consider the $\ell$-group homomorphism
$F:=\Gamma(f):G\to A$. Define $\alpha:H\to A$ by $\alpha(x)=F(2^{m}x)/2^{m}$, where $2^mx\in G$. Let $x\in H$ and $m\in\mathbb N$ such that
$2^mx\in G$. Then $m=n_xp+q$ where $p,q\in \mathbb N\cup \{0\}$ and $0\leq q<n_x$. By definition of $n_x$, we have $q=0$
(otherwise, $2^q x\in G$ which is absurd). That is, $n_x|m$ and $\alpha(x)=F(2^{m}x)/2^{m}=F(2^{n_xp}x)/2^{n_xp}=
F(2^{n_x}x)/2^{n_x}$, whence $F$ is well-defined. Also, $\alpha(0)=0$, $\alpha(u)=w$ and $\alpha(i(g))=F(x)$ for all $g\in G$.

(4) Let $x,y\in H$ be given. Then there exist $m,n\in\mathbb N$ such that $2^mx,2^ny\in G$ and so
$2^{mn}x,2^{mn}y\in G$. Thus $\alpha(x+y)=F(2^{mn}(x+y))/2^{mn}=F(2^{mn}x)/2^{mn}+F(2^{mn}y)/2^{mn}=F(2^{m}x)/2^{m}+F(2^{n}y)/2^{n}
=\alpha(x)+\alpha(y)$ (since $F$ is a homomorphism and $G$, $H$ and $A$ enjoy unique extraction of roots).

(5) If $x\leq y$, then $2^{mn}x\leq 2^{mn}y$ and so $F(2^{mn}x)\leq F(2^{mn}y)$ consequently,
$\alpha(x)=F(2^{mn}x)/2^{mn}\leq F(2^{mn}y)/2^{mn}=\alpha(y)$. Hence, $\alpha:H\to A$ is ordered-preserving.
Moreover, if $M$, or equivalently $G$, is a chain, then $D$ and $H$ are also chains. So, $\alpha$ is an $\ell$-group homomorphism.

(6) Similarly to (1), we can show that $x/2^n\wedge y/2^n=(x\wedge y)/2^n$ for all $x,y\in G$, and for $2^nx,2^ny\in G$, we have
$x\wedge y=(2^nx\wedge 2^ny)/2^n$. Thus, $2^nx\wedge 2^ny=2^n(x\wedge y)$. It follows that
\begin{eqnarray*}
\alpha(x)\wedge \alpha(y)&=& \dfrac{F(2^{mn}x)}{2^{mn}}\wedge \dfrac{F(2^{mn}y)}{2^{mn}}=\dfrac{F(2^{mn}x)\wedge F(2^{mn}y)}{2^{mn}}\\
&=& \dfrac{F(2^{mn}x\wedge 2^{mn}y)}{2^{mn}}=\dfrac{F(2^{mn}(x\wedge y))}{2^{mn}}=\alpha(x\wedge y).
\end{eqnarray*}

(7) Analogously to (6), $\alpha$ preserves $\vee$.

(8) If $x\in D_n$, then  $\alpha(r(x))=\alpha((x+u)/2)=F(2^{n+1}(x+u)/2)/2^{n+1}=(F(2^{n}x)+F(2^{n}u))/2^{n+1}=F(2^{n}x)/2^{n+1}+w/2=(\alpha(x)+w)/2$.
Moreover, by Theorem 4.4, $A$ is two-divisible, so Lemma \ref{div01} implies that $k:N\to N$, defined by $k(x)=(x+w)/2$, $x\in N$, is a square root on $N$, consequently $k=s$ (because the square root on a pseudo MV-algebra is unique). Hence,
$s(\alpha(x))=(\alpha(x)+w)/2=\alpha(r(x))$.

Consequently, $\alpha$ is a one-to-one homomorphism of pseudo MV-algebras with square roots. Summing up all the above cases, we conclude that $\Gamma(H,u)$ is a strict square closure for the MV-algebra $M$.
\end{proof}

The following notion will be used in the following statement. If $X$ is a subset of an $\ell$-group $G$, then $\sum_{i=1}^n X:=\{x_1+\cdots+x_n\mid x_1,\ldots,x_n\in X\}$.

\begin{cor}\label{co:RDP}
Let $M=\Gamma(G,u)$ be an MV-algebra, $\mathbf C(M)$ be its strict square root closure, and assume $M \subseteq \mathbf C(M)$. For each $x\in \mathbf C(M)$, there exists an integer $n\in\mathbb N$ such that
$2^nx\in \sum_{i=1}^{2^n}M$. That is, $\mathbf C(M)=\bigcup_{n=0}^\infty (\sum_{i=1}^{2^n}M/2^{n})$.
\end{cor}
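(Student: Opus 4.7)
The plan is to exploit the explicit construction of $\mathbf{C}(M)$ given in the proof of Theorem \ref{div7}. Recall from there that $\mathbf{C}(M) = \Gamma(H,u)$ with $H = \bigcup_{n \geq 0} D_n$, where $D_0 = G$ and $D_n = D_{n-1}/2$ is formed inside a fixed Abelian divisible $\ell$-group $D$ that enjoys unique extraction of roots. Under the identification $M \subseteq \mathbf{C}(M)$, the inclusion $G \subseteq H$ is the underlying one.

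First, I would fix $x \in \mathbf{C}(M)$ and use the description of $H$ to choose the least $n \in \mathbb{N} \cup \{0\}$ such that $x \in D_n$, which is equivalent to $2^n x \in G$. Since $x \in \Gamma(H,u)$ satisfies $0 \leq x \leq u$, multiplying by $2^n$ yields $0 \leq 2^n x \leq 2^n u = \underbrace{u + \cdots + u}_{2^n}$ in the positive cone of $G$.

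The key step is then an appeal to the Riesz decomposition property, which holds in every Abelian $\ell$-group and in particular in $G$ (here is where the hypothesis that $M$ is an MV-algebra, not just a pseudo MV-algebra, is used). RDP produces $m_1, \ldots, m_{2^n} \in G$ with $0 \leq m_i \leq u$ (so $m_i \in M$) and $2^n x = m_1 + \cdots + m_{2^n}$. This already establishes the first claim, namely $2^n x \in \sum_{i=1}^{2^n} M$. Dividing by $2^n$ inside $H$, which is legitimate because $H$ is two-divisible and inherits unique extraction of roots from $D$, one gets $x = (m_1 + \cdots + m_{2^n})/2^n \in \sum_{i=1}^{2^n} M / 2^n$, so $\mathbf{C}(M) \subseteq \bigcup_{n \geq 0} \bigl(\sum_{i=1}^{2^n} M / 2^n\bigr)$.

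For the reverse inclusion I would simply observe that for any $m_1,\ldots,m_{2^n} \in M$, the element $y := (m_1 + \cdots + m_{2^n})/2^n$ lies in $D_n \subseteq H$ and satisfies $0 \leq y \leq (2^n u)/2^n = u$, hence $y \in \Gamma(H,u) = \mathbf{C}(M)$. The only delicate point in the whole argument is the legitimate application of RDP to bound-sums of $u$'s, but this is classical for Abelian $\ell$-groups and requires no further work; everything else is bookkeeping inside the tower $D_0 \subseteq D_1 \subseteq \cdots$ from Theorem \ref{div7}.
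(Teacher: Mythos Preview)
Your proposal is correct and follows essentially the same route as the paper: both use the explicit description $\mathbf{C}(M)=\Gamma(H,u)$ with $H=\bigcup_n D_n$ from Theorem~\ref{div7}, observe that $x\in D_n$ means $2^n x\in G$ with $0\le 2^n x\le 2^n u$, and then invoke the Riesz decomposition (interpolation) property of the Abelian $\ell$-group $G$ to split $2^n x$ as a sum of $2^n$ elements of $M$. The only cosmetic difference is in the reverse inclusion: the paper verifies $M/2\subseteq\mathbf{C}(M)$ via the formula $y/2=r(y')'$ and then argues that sums of elements bounded by $u/2$ coincide with their $\oplus$-sums, whereas you simply note that $(\sum m_i)/2^n\in D_n\subseteq H$ and lies in $[0,u]$---your version is slightly more direct but relies on exactly the same structural facts.
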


\begin{proof}
Consider the notations in the proof of Theorem \ref{div7}.
For each $n\in\mathbb N$ set $M_n:=\sum_{i=1}^{2^n}M/2^{n}$.
According to Lemma \ref{div07}, $\mathbf C(M)$ is an MV-algebra with strict square root generated by $M$.
By Theorem \ref{div7}, $\mathbf C(M)=\Gamma(H,u)$, where $H=\bigcup_{i\in\mathbb N}D_i$. For each $y\in M$, $y/2=u-(u-y+u)/2=r(y')'\in \mathbf C(M)$, where $r(x)=(x+u)/2$, $x\in \mathbf C(M)$ is a (unique) strict square root on $\mathbf C(M)$. It entails that
$M/2\s \mathbf C(M)$. Since $x,y\leq u/2$ for all $x,y\in M/2$, we have $x+y=(x+y)\wedge u=x\oplus y\in \mathbf C(M)$. That is, $M_1\s \mathbf C(M)$. In a similar way, we can show that
$M_n\s \mathbf C(M)$.

Now, let  $x\in \mathbf C(M)$. If $x\in D_0=G$, then $x\in \Gamma(G,u)=M/2^0$. If $x\in D_1$, then $x=g/2\leq u$ for some $g\in D_0$, so $g\leq 2u$.
Due to the Riesz Interpolation Property (RIP), there exist $g_1,g_2\in \Gamma(G,u)$ such that $g=g_1+g_2$, whence $x=(g_1+g_2)/2=g_1/2+ g_2/2\in M/2+M/2$. On the other hand, $g_1/2+g_2/2=g_1/2\oplus g_2/2\in M/2\oplus M/2$, where $\oplus$ is the binary operation on the MV-algebra $\Gamma(H,u)$, since
$g_1/2,g_2/2\leq u/2$.

Let $0\leq n$. If $x\in D_{n}$, then $x=g/2^{n}$ for some $g\in G$, so $2^{n}x=g\leq 2^{n}u$.
Due to (RIP), there exist positive elements $g_1,\ldots, g_{2^{n}}\leq u$ of $G$ (that is,  $g_1,\ldots, g_{2^{n}} \in M$) such that $g=\sum_{i=1}^{2^{n}}g_i$,
whence $x=g/2^{n}=(\sum_{i=1}^{2^{n}}g_i)/2^n=\sum_{i=1}^{2^{n}}(g_i/2^n)\in \sum_{i=1}^{2^{n}}M/2^n$.
Therefore, $\mathbf C(M)\s\bigcup_{n=0}^\infty (\sum_{i=1}^{2^n}M/2^{n})$, and so, we have $\mathbf C(M)=\bigcup_{n=0}^\infty (\sum_{i=1}^{2^n}M/2^{n})$.
\end{proof}

The following example shows that there are non-commutative linearly ordered symmetric pseudo MV-algebras without strict square roots that can be embedded into a linearly ordered symmetric pseudo MV-algebra with strict square root with some kind of minimality of the embedding.

\begin{exm}
Let $G=\mathbb Z\lex \mathbb Z \lex \mathbb Z\lex \mathbb Z$, $u=(1,0,0,0)$, $H=\mathbb D\lex \mathbb D \lex \mathbb D\lex \mathbb D$, where $\mathbb D$ is the group of dyadic numbers in $\mathbb R$, and let the group operations on $G$ and $H$ be defined by (4.2). The groups $G$ and $H$ are linearly ordered, $u\in \C(G)$, $G$ is not two-divisible whereas $H$ is two-divisible, and $G$ is a subgroup of $H$.
The pseudo MV-algebras $M=\Gamma(G,u)$ and $C=\Gamma(H,u)$ are symmetric and linearly ordered, $M$ is not two-divisible, and $C$ is two-divisible. We assert that $\Gamma(H,u)$ is the least two-divisible pseudo MV-algebra containing $\Gamma(G,u)$ and contained in $\Gamma(H,u)$, moreover $\Gamma(H,u)$ is with strict square root.

Indeed, first, we show that $H$ is the least subgroup of $H$ that is two-divisible and contains $G$. We have $\mathbb D=\{m/2^n\mid m\in \mathbb Z, n\ge 0\}$ and $\mathbb D$ is two-divisible. Let $H_1$ be a two-divisible subgroup of $H$ containing $G$. Then $\mathbb D\times\{0\}\times\{0\}\times \{0\},  \{0\}\times\mathbb D\times\{0\}\times\{0\}, \{0\}\times \{0\}\times \mathbb D\times\{0\}, \{0\}\times \{0\}\times\{0\}\times \mathbb D \subseteq H_1$. Take $(a,b,c,d)\in H$. Then $(a,b,c,d)= (a,0,0,0)+(0,b,0,0)+ (0,0,c,0)+(0,0,0,d-bc)\in H_1$, so that $H_1=H$.
\end{exm}

In Corollary \ref{div05}, we showed that if $M=\Gamma(G,u)$ is a pseudo MV-algebra such that $G\in\mathcal{RDG}$, then $M$ can be embedded into a pseudo MV-algebra with strict square root. There is another way for embedding $M$ into a pseudo MV-algebra with square root. We will prove that $M$ can be embedded in a pseudo MV-algebra with square root, not necessarily strict.

\begin{prop}\label{10.1}
Every symmetric pseudo MV-algebra $M=\Gamma(G,u)$, where $G \in \mathcal{RDG}$, can be embedded into a symmetric pseudo MV-algebra with square root, not necessarily strict.
\end{prop}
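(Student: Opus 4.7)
My plan is to reduce to the linearly ordered case by exploiting a subdirect decomposition inherited from the target of the defining embedding of $G$, and then to assemble a square root on the resulting product componentwise. Theorem \ref{div4}(ii) already handles each linearly ordered factor that lies in $\mathcal{RDG}$, so the main work is orchestrating the descent to linearly ordered pieces and the reassembly into one pseudo MV-algebra.

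First, since $G\in\mathcal{RDG}$, I would fix an $\ell$-group embedding $f:G\to H$ into a representable two-divisible $\ell$-group $H$, together with a subdirect embedding $\psi:H\to\prod_{j\in J}H_j$ in which each $H_j$ is linearly ordered; each $H_j$ is again two-divisible because any $y\in H_j$ lifts to some $h\in H$ whose half witnesses $y/2$. For each $j$, set $K_j:=f^{-1}(\ker(\pi_j\circ\psi))$ and $G_j:=G/K_j$. The composition $G\to H\to H_j$ factors through an $\ell$-group embedding $G_j\hookrightarrow H_j$, so each nontrivial $G_j$ is linearly ordered and lies in $\mathcal{RDG}$. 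Writing $u_j:=u+K_j$, which is a strong unit of $G_j$, the pseudo MV-algebra $M_j:=\Gamma(G_j,u_j)$ is symmetric, since symmetry of $M$ gives $u\in\C(G)$ and this centrality descends to $u_j\in\C(G_j)$. Thus each $M_j$ is a linearly ordered symmetric pseudo MV-algebra with $G_j\in\mathcal{RDG}$.

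Next, Theorem \ref{div4}(ii) provides an embedding $\iota_j:M_j\hookrightarrow N_j$ into a linearly ordered pseudo MV-algebra $N_j$ equipped with a strict square root $r_j$. I would then form $N:=\prod_{j\in J}N_j$ under componentwise operations and define $r:N\to N$ coordinatewise by $r((x_j)_j):=(r_j(x_j))_j$. A direct componentwise check confirms that $r$ is a square root on $N$: both the identity $r(x)\odot r(x)=x$ and the universal property $y\odot y\le x\Rightarrow y\le r(x)$ descend from the factor-wise statements. Because each $N_j$ is symmetric (a strict square root forces symmetry), the product $N$ is symmetric too. Composing the subdirect embedding $M\hookrightarrow\prod_j M_j$ (obtained by restricting the analogous embedding of $G$ to $[0,u]$) with $\prod_j\iota_j$ yields the desired embedding of $M$ into the symmetric pseudo MV-algebra $N$ with square root; incidentally, this $r$ is actually strict, slightly stronger than the statement requires.

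The main technical obstacle is arranging that each quotient $G_j$ really belongs to $\mathcal{RDG}$, which is not automatic since $\mathcal{RDG}$ is closed under subalgebras and products but a priori not under $\ell$-group quotients. The workaround is the decisive point of the plan: instead of starting from an intrinsic subdirect decomposition of $G$, I take the one \emph{induced} from a fixed subdirect decomposition of $H$. Each $G_j$ is then automatically an $\ell$-subgroup of the representable two-divisible $\ell$-group $H_j$, so $\mathcal{RDG}$-membership of $G_j$ comes for free. Once this step is in place, the remainder of the argument is a routine componentwise assembly of Theorem \ref{div4}(ii).
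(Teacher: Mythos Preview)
Your proof is correct and follows essentially the same route as the paper: pull back a subdirect decomposition of the ambient representable two-divisible $\ell$-group $H$ to obtain linearly ordered quotients $G_j$ of $G$ that automatically lie in $\mathcal{RDG}$ (since they embed in the linearly ordered two-divisible $H_j$), apply Theorem~\ref{div4} to each factor, and take the product. The one cosmetic difference is that the paper singles out the two-element factors $M_j\cong\{0,1\}$ and equips them with the identity map rather than embedding them further via Theorem~\ref{div4}(ii); this is precisely what makes the paper's resulting square root ``not necessarily strict,'' whereas your uniform treatment yields a strict square root in every case, as you observe.
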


\begin{proof}
Let $M=\Gamma(G,u)$ be a symmetric pseudo MV-algebra such that $G\in\mathcal{RDG}$.
There exists an $\ell$-group embedding $h:G\to H$, where $H$
is two-divisible and representable. Let $Y$ be the set of all normal prime ideals of $H$.
By \cite[Thm 4.1.1]{Anderson}, the $\ell$-group $G$ is also representable.
Set $X=\{h^{-1}(Q)\mid Q\in Y\}$. Then each element of $X$ is a normal convex $\ell$-subgroup of $G$ and $\bigcap_{P\in X}P=\{0\}$.
It follows that $f:G\to \prod_{P\in X}G/P$ defined by $f(g)=(g/P)_{P\in X}$ is a one-to-one $\ell$-group homomorphism.
Choose $f^{-1}(Q)=P\in X$. We know that $\{P\cap M\mid P\in X\}$ is a set of normal prime ideals of $M$, and $M$ is a subdirect product of the family
$\{M_P=M/(P\cap M)\mid P\in X\}$ of linearly ordered pseudo MV-algebras.

Every $M_P$ is one of the following three cases.

(i) If $|M_P|=1$, then we can remove $M_P$.

(ii) If $|M_P|=2$, then $M_P=\{0/(P\cap M),1/(P\cap M)\}$. We set $r_P=\id_{M_P}$ and $K_P=\Gamma(\mathbb Z,1)$.
Let $g_P:M_P\to \Gamma(\mathbb Z,1)$ be the only isomorphism between Boolean algebras with two elements.

(iii) Suppose that $3\leq|M_P|$. We have $\Gamma(G/P,u/P)=M_P$. Since the mapping $h_P:G/P\to H/Q$ sending
$g/P$ to $h(g)/Q$ is an $\ell$-group embedding and $H/Q$ is two-divisible, so $G/P$ is a linearly ordered group belonging to $\mathcal{RDG}$.
By the proof of Theorem \ref{div4}(i), $A_P:=\bigcup G_{P,n}$ is a two-divisible $\ell$-group, where $G_{P,0}=G_P$ and $G_{P,n+1}=G_{P,n}/2+G_{P,n}/2$. Since $u\in \C(G)$, by that theorem $(u/P)/2\in \C(A_P)$. Lemma \ref{div01} implies that
there exists a strict square root $r_P$ on $K_P:=\Gamma(A_P,u/P)$. Set $g_P=\Gamma(h_P)$.

Now, define the pseudo MV-algebra $M_0:=\prod_{P\in X}K_P$, where $K_P$ is either $\Gamma(\mathbb Z,1)$, if $|M_P|=2$ or $\Gamma(A_P,u/P)$, if $|M_P|>2$, and let $r:M_0\to M_0$ be the square root defined by $r((x_P)_{P\in X})=(r_P(x_P))_{P\in X}$. If all $|M_P|>2$, $r$ is a strict square root on $M_0$, otherwise $r$ is not strict.

Consider the embedding $\varphi:M\xrightarrow{f_M}\prod_{P\in X}M/(P\cap M)\xrightarrow{(g_P)_{P\in X}}\prod_{P\in X}K_P=M_0$, where $f_M$ is the restriction of $f$ onto $M$. Then $M$ can be embedded into a symmetric pseudo MV-algebra $M_0$ with square root, not necessarily strict.
\end{proof}

A normal ideal $I$ of a pseudo MV-algebra $(M;\oplus,^-,^\sim,0,1)$ is called  (i) a {\em strict square ideal} if $M/I$ is a pseudo MV-algebra with a strict square root, and (ii) a {\it Boolean ideal}, if $M/I$ is a Boolean algebra (equivalently,  $x\wedge x^-\in I$ for all $x\in M$).

The following lemma will be useful for the proof of Proposition \ref{10.5}.

\begin{lem}\label{strictlem}
Let $I$ be a normal ideal of a pseudo MV-algebra $(M;\oplus,^-,^\sim,0,1)$ with a square root $r$ and let $w=r(0)^-\odot r(0)^-$.
\begin{itemize}
\item[{\rm (i)}]  $I$ is a strict square ideal \iff $w\in I$.
\item[{\rm (ii)}] If $J$ is a normal ideal of $M$ containing a strict square ideal $I$, then $J$ is a strict square ideal, too.

The intersection of each family of strict square ideals of $M$ is a strict square ideal.
\item[{\rm (iii)}] The interval $[0,w]$ is the least square ideal of $M$.
\end{itemize}
\end{lem}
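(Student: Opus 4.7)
My plan rests on the observation (already invoked in this paper via \cite[Prop 3.9]{DvZa3}) that every surjective homomorphism of pseudo MV-algebras transports a square root to its codomain, combined with the uniqueness of square roots. Applied to the quotient projection $\pi\colon M \to M/I$, this produces the unique square root $\bar r$ on $M/I$ given by $\bar r(x/I) = r(x)/I$.

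For part (i), I would simply compute: $I$ is a strict square ideal if and only if $\bar r$ is strict, which by the characterization already used in the proof of Lemma~\ref{div01}(i) (namely \cite[(4.1)]{DvZa3}) is equivalent to $\bar r(0)^- \odot \bar r(0)^- = 0_{M/I}$, i.e.\ $(r(0)^- \odot r(0)^-)/I = 0/I$, i.e.\ $w \in I$. Part (ii) then follows immediately: the containment $I \subseteq J$ transfers $w \in I$ to $w \in J$, and an arbitrary intersection of normal ideals is a normal ideal that inherits membership of $w$ from each member of the family, so (i) applies in both cases.

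For part (iii) the main work is to show that $[0,w]$ is actually a normal ideal. The cleanest route is to establish that $w$ is a Boolean element of $M$, i.e.\ $w \oplus w = w$ and $w \wedge w^- = 0$; I would argue this via subdirect decomposition. Since the existence of a square root forces $M$ to be symmetric (Lemma~\ref{div01}(i)), $M$ subdirectly embeds into linearly ordered pseudo MV-algebras $M_i$, each carrying an induced square root $r_i$. By Proposition~2.4(11)(b) each $r_i(0)^- \odot r_i(0)^- \in \{0,1\}$, so every coordinate of $w$ is Boolean, whence $w$ itself is. Once $[0,w]$ is recognised as the principal normal ideal generated by the Boolean element $w$, part (i) makes it a strict square ideal, and minimality is a one-line argument: any strict square ideal contains $w$ by (i), and being downward closed must contain all of $[0,w]$.

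The step I expect to be the main obstacle is the Boolean character of $w$ in (iii) --- it is the only non-formal ingredient, since the remaining content of the lemma reduces to definition-chasing via (i). Once the subdirect argument is in place the three statements fall out almost mechanically.
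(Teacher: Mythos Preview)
Your treatment of (i) and (ii) is essentially identical to the paper's: both apply \cite[Prop~3.9]{DvZa3} to obtain the induced square root on $M/I$ and then read off the equivalence $w\in I$.

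The gap is in (iii). You invoke Lemma~\ref{div01}(i) to conclude that the existence of a square root makes $M$ symmetric, and from there that $M$ subdirectly embeds into chains. But Lemma~\ref{div01}(i) has the standing hypothesis that $(G,u)$ is a \emph{representable} unital $\ell$-group; the subdirect embedding into linearly ordered factors is exactly that hypothesis, not a consequence of symmetry. The lemma you are proving carries no representability assumption, so you are not entitled to a decomposition into chains, and your coordinatewise argument that $w$ is Boolean does not get off the ground.

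The paper avoids this entirely by citing \cite[Prop~3.3(11)]{DvZa3}, which shows directly (for an arbitrary pseudo MV-algebra with square root) that $w=r(0)^-\odot r(0)^-$ is a Boolean element, and \cite[Thm~4.3]{DvZa3}, which gives the decomposition $M\cong[0,w]\times[0,w^-]$. Once $w\in\mathrm B(M)$ is in hand, your remaining steps (that $[0,w]$ is a normal ideal, that (i) makes it a strict square ideal since $w\in[0,w]$, and the one-line minimality argument) are correct and match the paper.
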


\begin{proof}
(i) Let $r$ be a square root on $M$. Due to \cite[Prop 3.9]{DvZa3}, $r_I:M/I\to M/I$ defined by $r_I(x/I)=r(x)/I$ is a square root on $M/I$.
\begin{eqnarray}
\mbox{ $I$ is strict } \Leftrightarrow r_I(0/I)^-\odot r_I(0/I)^-=0/I \Leftrightarrow (r(0)^-\odot r(0)^-)/I=0/I \Leftrightarrow w\in I.
\end{eqnarray}
(ii) The proof is a straight consequence of (i).

(iii) By \cite[Prop 3.3(11)]{DvZa3}, $w$ is a Boolean element and by \cite[Thm 4.3]{DvZa3}, $M\cong [0,w]\times [0,w^-]$
(the mapping $f:M\to [0,w]\times[0,w^-]$ defined by $f(x)=(x\wedge w,x\wedge w^-)$ is the isomorphism).
Therefore, $[0,w]$ is a normal ideal of $M$, and $M/[0,w]$ is isomorphic to $[0,w^-]$ which is a strict pseudo MV-algebra. It entails that $[0,w]$ is a strict square ideal.
On the other hand, if $J$ is a strict square ideal of $M$, then by (ii), $w\in J$ and so $[0,w]\s J$. Therefore,
$[0,w]$ is the least square ideal of $M$.
\end{proof}

For each non-degenerate pseudo MV-algebra $M$,
$X(M)$ denotes the set of all proper normal prime ideals of $M$, $X(M)_1$ is the set of all Boolean normal prime ideals of $M$ and $X(M)_2=X(M)\setminus X(M)_1$. In addition, we define $I_1:=\bigcap X(M)_1$ and $I_2:=\bigcap X(M)_2$.
If there is no ambiguity, we will denote them simply by $X$, $X_1$, and $X_2$, respectively.

\begin{prop}\label{10.5}
Let $M=(M;\oplus,^-,^\sim,0,1)$ be a representable pseudo MV-algebra with a square root  $r$. The following statements hold:
\begin{itemize}[nolistsep]
\item[{\rm (i)}] If $r$ is strict, then $P$ is not a Boolean ideal, for each $P\in X(M)\setminus\{M\}$.
\item[{\rm (ii)}] If $M\cong M_1\times M_2$ is the decomposition of $M$ that was mentioned in  \cite[Thm 4.3]{DvZa3}, where $M_1$ is a Boolean algebra and $M_2$ is a strict MV-algebra, then $M_i$ is isomorphic to a subalgebra of $\prod_{P\in X_i}(M/P)$, for $i=1,2$.
\item[{\rm (iii)}] The ideal $M_2=\bigcap_{P\in X_1(M)}P$ is the least Boolean ideal and $M_1=\bigcap_{P\in X_2(M)}P$ is the least strict square ideal of $M$.
In addition, $M$ is strict \iff $\bigcap_{P\in X_2}P=\{0\}$.
\end{itemize}
\end{prop}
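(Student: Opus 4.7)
The plan is to handle the three parts in sequence, relying on Lemma \ref{strictlem}, the Boolean-times-strict decomposition $M\cong M_1\times M_2$ from \cite[Thm 4.3]{DvZa3}, and the fact that a representable pseudo MV-algebra subdirectly embeds into the product of its quotients by normal primes. For (i), strictness of $r$ gives $w=r(0)^-\odot r(0)^-=0$, so $w\in P$ for every normal ideal $P$; by Lemma \ref{strictlem}(i), every proper prime $P\in X(M)$ is a strict square ideal, so the induced square root $r_P$ on the chain $M/P$ is itself strict. If moreover $P$ were Boolean with $P\ne M$, then $M/P$ would be a non-trivial Boolean chain, forcing $|M/P|=2$; the defining identity $r_P(0)\odot r_P(0)=0$ then pins down $r_P(0)=0$, whence $r_P(0)^-=1\ne 0=r_P(0)$, contradicting strictness.

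For (ii), the heart of the matter is to sort $X(M)\setminus\{M\}$ via the pivot element $w$. Since $w$ is Boolean by \cite[Prop 3.3(11)]{DvZa3} and $M/P$ is a chain, $w/P\in\{0,1\}$, so either $w\in P$ or $w^-\in P$. If $w^-\in P$, then $M_2=[0,w^-]\subseteq P$, and under $M\cong M_1\times M_2$ the ideal $P$ corresponds to $P_1\times M_2$ with $P_1$ a normal prime of $M_1$; the quotient $M/P\cong M_1/P_1$ is Boolean, so $P\in X_1$. If $w\in P$, then $M_1\subseteq P$, $P$ corresponds to $M_1\times P_2$, and $M/P\cong M_2/P_2$ inherits a strict square root via Lemma \ref{strictlem}(i); part (i) applied inside $M/P$ then rules out Boolean behaviour, so $P\in X_2$. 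Consequently $\prod_{P\in X_i}(M/P)$ is isomorphic to the product of $M_i/Q$ over the normal primes $Q$ of $M_i$; since each $M_i$ is a direct factor of the representable $M$, and hence representable, its canonical subdirect embedding supplies the required $M_i\hookrightarrow\prod_{P\in X_i}(M/P)$.

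For (iii), the same bookkeeping gives $\bigcap_{P\in X_1}P=\{0\}\times M_2$ and $\bigcap_{P\in X_2}P=M_1\times\{0\}$, where representability of the factors collapses the inner intersections to $\{0\}$. Since $M/M_2\cong M_1$ is Boolean and $M/M_1\cong M_2$ is strict, $M_2$ is a Boolean ideal and $M_1$ is a strict square ideal. For minimality, take any Boolean (resp.\ strict square) ideal $J$; then $M/J$ is Boolean (resp.\ admits a strict square root) and, being a quotient of the representable $M$, is itself representable, so $J$ equals the intersection of the normal primes $P\supseteq J$. Each such $P$ has a Boolean (resp.\ strict-square-root) quotient $M/P$, forcing $P\in X_1$ (resp.\ $P\in X_2$ by part (i)), whence $J\supseteq\bigcap_{P\in X_1}P=M_2$ (resp.\ $J\supseteq M_1$). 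The final equivalence drops out: $M$ is strict iff $\{0\}$ is itself a strict square ideal, iff $M_1\subseteq\{0\}$, iff $\bigcap_{P\in X_2}P=\{0\}$.

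The main obstacle will be part (ii), namely pinning down the bijection between $X_1,X_2$ and the normal prime spectra of $M_1,M_2$. Everything rests on the dichotomy ``$w\in P$ versus $w^-\in P$'' combined with the Boolean-versus-strict classification of chain quotients (via \cite[Thm 3.8]{DvZa3} and part (i)); once that alignment is established, (iii) reduces to routine manipulation with normal primes in the representable setting.
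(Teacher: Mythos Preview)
Your argument is correct and follows the same overall architecture as the paper: both proofs pivot on the element $w=r(0)^-\odot r(0)^-$, use the decomposition $M\cong[0,w]\times[0,w^-]$ from \cite[Thm 4.3]{DvZa3}, and sort the normal primes according to whether $w\in P$ or $w^-\in P$.

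The organizational difference is worth noting. For (ii), the paper stays inside the subdirect embedding $f\colon M\to\prod_{P\in X(M)}M/P$: it checks directly that $w/P=1/P$ for $P\in X_1$ and $w/P=0/P$ for $P\in X_2$, so $f(w)$ lands in the ``$X_1$-block'' $E_1$ and $f(w^-)$ in the ``$X_2$-block'' $E_2$, whence $f([0,w])\subseteq E_1$ and $f([0,w^-])\subseteq E_2$. You instead exploit the product structure $M\cong M_1\times M_2$ to set up an explicit bijection between $X_i$ and the normal prime spectrum of $M_i$ (via $P\leftrightarrow P_1\times M_2$ or $M_1\times P_2$), and then invoke representability of each factor $M_i$ to obtain the subdirect embedding $M_i\hookrightarrow\prod_Q M_i/Q$. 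This buys you something for (iii): the identities $\bigcap_{P\in X_1}P=\{0\}\times M_2$ and $\bigcap_{P\in X_2}P=M_1\times\{0\}$ fall out immediately from your bijection, whereas the paper re-argues these intersections separately using Lemma~\ref{strictlem}(iii) and the characterization $[0,w]=\bigcap\{P\in X(M)\mid w\in P\}$. Your minimality argument for (iii) is essentially identical to the paper's (write $J$ as an intersection of normal primes above it, then classify those primes), though the paper also appeals to Lemma~\ref{strictlem}(ii) for the strict case where you invoke part~(i) directly.
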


\begin{proof}
(i) Choose $P\in X(M)\setminus\{M\}$. Due to the onto homomorphism $\pi_P:M\to M/P$ and \cite[Prop 3.9]{DvZa3}, $r_P:M/P\to M/P$
defined by $r_P(x/P)=r(x)/P$ is a square root, which is clearly strict, since $r$ is strict.
Hence $r_P(0/P)=r_P(0/P)^-$.
If $M/P$ is a Boolean algebra, then it has only two elements, that is, $M/P=\{0/P,1/P\}$ and
by \cite[Thm 3.8]{DvZa3}, $r_p(0/P)=0/P$, so $0/P=(0/P)^-=1/P$ which is absurd.
Therefore, $M/P$ is not a Boolean algebra.

(ii) According to \cite[Thm 4.4]{DvZa3}, $M\cong M_1\times M_2$, where $M_1=[0,w]$ is a Boolean algebra, $M_2=[0,w^-]$ is a strict MV-algebra, and $w=r(0)^-\odot r(0)^-$.
Set $E_1:=\{(x_P/P)_{P\in X(M)}\in \prod_{P\in X(M)}M/P\mid x_P=0,~\forall P\in X_2(M)\}$ and $E_2=\{(x_P/P)_{P\in X(M)}\in \prod_{P\in X(M)}M/P\mid x_P=0,~\forall P\in X_1(M)\}$.

Let $a=(a_P/P)_{P\in X(M)}$, where $a_P=1$ if $P\in X_1(M)$ and $a_P=0$ if $P\in X_2$. We define a mapping $g:\prod_{P\in X(M)}M/P\to E_1\times E_2$ by $g(x)=(x\wedge a,x\wedge a')$ for all $x\in \prod_{P\in X(M)}M/P$. Then $g$ is an isomorphism. Let $f:M\to \prod_{P\in X(M)}M/P$ be the natural subdirect embedding.
Consider the embedding $\varphi:=g\circ f$.

Case 1. If $P\in X_2(M)$, then $M/P$ is not a Boolean algebra, so $r_P:M/P\to M/P$ defined by $r(x/P)=r(x)/P$ is strict, by \cite[Cor 4.5]{DvZa3}, and in other words, $P$ is a strict square ideal. We have $(r(0)^-\odot r(0)^-)/P=r_P(0/P)^-\odot r_P(0/P)^-=0/P$, that is, $w\in P$ and $w^-/P=1/P$.

Case 2. If $P\in X_1(M)$, then $M/P$ is a Boolean algebra, so $r_P(0/P)=0/P$. Thus $(r(0)^-\odot r(0)^-)/P=r_P(0/P)'\odot r_P(0/P)=1/P$, so that $w/P=1/P$ and
$w'/P=0/P$.

Hence, $f(w)\in E_1$ and $f(w^-)\in E_2$ yielding $f(M_1)=f([0,w])\s E_1$ and $f(M_2)=f([0,w^-])\s E_2$.
The element $f(w)$ is a top element of $E_1$, and $f(w^-)$ is a top element of $E_2$. Therefore, $M_1$ is isomorphic to a subalgebra of $E_1$, and $M_2$ is isomorphic to a subalgebra of $E_2$.
In addition, $M_1=\{0\}$ \iff $w=0$ \iff $E_1=\{0\}$. Similarly, $M_2=\{0\}$ \iff $w^-=0$ \iff $E_2=\{0\}$.

(iii) Clearly, $I=\bigcap_{P\in X_1(M)}P$ is a Boolean ideal of $M$ (it is normal, too). If $J$ is a Boolean ideal of $M$, then
$J=\bigcap\{P\mid J\s P\}$, since $M/J$ is representable. Each element $P$ of this set is a Boolean ideal of $M$ (because it contains $J$).
It follows that $I\s J$. Since $w^-\in\B(M)$, the set $M_2=[0,w^-]$ is an ideal of $M$ which is normal (since $M\cong M_1\times M_2$ and $\{0\}\times M_2$ is a normal ideal of
$M_1\times M_2$). Also, $M/M_2\cong M_1$ is a Boolean algebra, so $I\s M_2$. On the other hand, by the proof of part (ii),
for each $P\in X_1(M)$, we have $w^-\in P$, whence $M=[0,w']\s I$. Therefore, $M_2=\bigcap_{P\in X_1(M)}P$.

Let $K=\bigcap\{P\mid P\in X_2(M)\}$. According to (ii), each $P\in X_2(M)$ is a strict square ideal, so by Lemma \ref{strictlem},
$K$ is a strict square ideal of $M$. Lemma \ref{strictlem} implies that $[0,w]\s K$.

On the other hand, $[0,w]=\bigcap\{P\in X(M)\mid [0,w]\s P\}=\bigcap\{P\in X(M)\mid w\in P\}$, since $M$ is representable, so by Lemma \ref{strictlem}, each element of the set $\{P\in X(M)\mid [0,w]\s P\}$ is a strict square ideal.
Consequently, $M/P$ is not a Boolean algebra. Hence $\{P\in X(M)\mid [0,w]\s P\}\s X_2(M)$ and $K\s[0,w]$.
Therefore,  $K=[0,w]$ is the least strict square ideal of $M$; see Lemma \ref{strictlem}(iii).

The proof of the other parts is straightforward.
\end{proof}

In the following theorem, we characterize representable pseudo MV-algebra with square root using normal prime ideals.

\begin{thm}
Let $(M;\oplus,^-,^\sim,0,1)$ be a representable pseudo MV-algebra with a square root $r$. Then  $M$ is strict \iff $M$ does not have any proper Boolean normal prime ideal.

Consequently, if $M$ has a proper  Boolean normal prime ideal, then $M$ is not strict.
\end{thm}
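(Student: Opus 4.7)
The plan is to read the theorem straight off Proposition~\ref{10.5}, using part (i) for one direction and part (iii) for the other.

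For the forward implication, assume $r$ is strict. Proposition~\ref{10.5}(i) states that under this hypothesis, for every $P\in X(M)\setminus\{M\}$, the quotient $M/P$ is not a Boolean algebra, i.e.\ $P$ is not a Boolean ideal. Since the proper normal prime ideals are exactly the members of $X(M)$ (which, as defined before Proposition~\ref{10.5}, excludes $M$ itself), this already says that $M$ possesses no proper Boolean normal prime ideal.

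For the converse, suppose no $P\in X(M)$ is Boolean. In the notation fixed just before Proposition~\ref{10.5}, this means $X_1(M)=\emptyset$, so $X_2(M)=X(M)$. Because $M$ is representable, it embeds subdirectly into the product $\prod_{P\in X(M)}M/P$ of its linearly ordered quotients; equivalently, $\bigcap_{P\in X(M)}P=\{0\}$. Hence $\bigcap_{P\in X_2(M)}P=\{0\}$ as well, and the last clause of Proposition~\ref{10.5}(iii) yields that $M$ is strict. (Alternatively, one may invoke Proposition~\ref{10.5}(iii) directly: the least Boolean ideal $M_2=\bigcap_{P\in X_1(M)}P$ equals the full empty intersection $M$, so $M_2=[0,w^-]=M$, forcing $w^-=1$ and $w=0$; Lemma~\ref{strictlem}(i) with $I=\{0\}$ then gives that $\{0\}$ is a strict square ideal, i.e.\ $r$ itself is strict.)

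The ``consequently'' clause is the contrapositive of the forward direction: if $M$ admits a proper Boolean normal prime ideal, then the equivalence just proved implies that $M$ cannot be strict. There is no real obstacle here since all the work has been done in Proposition~\ref{10.5} and Lemma~\ref{strictlem}; the only mild subtlety is noting that representability is precisely what guarantees $\bigcap_{P\in X(M)}P=\{0\}$, so it cannot be dropped.
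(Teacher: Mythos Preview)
Your proof is correct and follows essentially the same route as the paper. The paper's argument is a direct inline version of the same reasoning: for the forward direction it reproves the content of Proposition~\ref{10.5}(i), and for the converse it observes $X(M)=X(M)_2$ and then shows $w\in\bigcap X(M)_2=\bigcap X(M)=\{0\}$ via Lemma~\ref{strictlem}, which is exactly what underlies the last clause of Proposition~\ref{10.5}(iii) that you cite.
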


\begin{proof}
Suppose that $M$ is strict. If $P$ is a Boolean normal prime ideal of $M$, then $M/P$ is a Boolean algebra.
By \cite[Prop 3.9]{DvZa3}, $r_P(x)=r(x)/P$ is a square root on $M/P$. Since $r$ is strict, $r(0/P)=r(0)/P=r(0)^-/P=(r(0)/P)^-$ and the Booleanicity of $M/P$ entails $r(0/P)=0/P$. It follows that $0/P=1/P$, which means $P=M$. Conversely, assume that $M$ has no proper Boolean normal prime ideal.
If $M$ is degenerate, then clearly, the statement holds. Let $M$ be non-degenerate. The pseudo MV-algebra $M$ is not a Boolean algebra. Otherwise, each prime ideal of $M$ is a Boolean normal prime ideal. So, $X(M)=X(M)_2$.
Set $w=r(0)^-\odot r(0)^-$. By Lemma \ref{strictlem}(iii), $w\in\bigcap\{P\colon P\in X(M)_2\}=\bigcap\{P\colon P\in X(M)\}=\{0\}$. Therefore, $r(0)=r(0)^-$ and $r$ is a strict square root on $M$.
\end{proof}

Finally, we present a criterion showing when an MV-algebra $N$ with strict square root is a strict square root closure of an MV-algebra. It will be used in Example \ref{ex:D(M)}.

\begin{thm}\label{th:crit}
Let $M=\Gamma(G,u)$ be an MV-algebra which is a subalgebra of an MV-algebra $N=\Gamma(H,u)$ with strict square root.
The following statements are equivalent:
\begin{itemize}
\item[{\rm (i)}] $N\cong \mathbf C(M)$.
\item[{\rm (ii)}] For each $h\in H$, there is an integer $n$ such that $2^nh \in G$.
\item[{\rm (iii)}] For each $x\in N$, there are an integer $n$ and elements $x_1,\ldots,x_{2^n}\in M$ such that $x= x_1/2^n+\cdots+x_{2^n}/2^n$.
\end{itemize}
\end{thm}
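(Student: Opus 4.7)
The plan is to establish the cycle (i) $\Rightarrow$ (iii) $\Rightarrow$ (ii) $\Rightarrow$ (i). The implication (i) $\Rightarrow$ (iii) is immediate from Corollary \ref{co:RDP}, which describes $\mathbf{C}(M)$ explicitly as $\bigcup_{n \ge 0}\bigl(\sum_{i=1}^{2^n} M / 2^n\bigr)$; transporting this along an isomorphism $N \cong \mathbf{C}(M)$ yields (iii).

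For (iii) $\Rightarrow$ (ii) I would argue as follows. Fix $h \in H$. Since $u$ is a strong unit in $H$, there exists $m \in \mathbb{N}$ with $|h| \le mu$, hence $0 \le h + mu \le 2mu$. The Riesz Interpolation Property (valid in the Abelian $\ell$-group $H$) lets me write $h + mu = y_1 + \cdots + y_{2m}$ with each $y_j \in [0,u] = N$. Applying (iii) to each $y_j$ produces integers $n_j$ and elements of $M$ expressing $y_j$ as a sum of $2^{n_j}$ halvings $x_{j,k}/2^{n_j}$. Setting $n = \max_j n_j$ clears every denominator simultaneously, so $2^n y_j = \sum_k 2^{n-n_j} x_{j,k} \in G$ for each $j$. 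Consequently $2^n(h+mu) \in G$, and subtracting $2^n m u \in G$ gives $2^n h \in G$.

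The main step is (ii) $\Rightarrow$ (i). The universal property of $\mathbf{C}(M)$ (Definition \ref{div6}), applied to the injective homomorphism $j: M \hookrightarrow N$ and the strict square root on $N$, furnishes a unique injective homomorphism $\alpha: \mathbf{C}(M) \to N$ of pseudo MV-algebras with square root such that $\alpha \circ i = j$. I plan to prove that $\alpha$ is onto, which gives the isomorphism. Given $y \in N \subseteq H$, (ii) yields $n$ with $2^n y \in G$; since $0 \le 2^n y \le 2^n u$, RIP in $G$ writes $2^n y = g_1 + \cdots + g_{2^n}$ with $g_k \in M$. In $\mathbf{C}(M)$, two-divisibility of the underlying $\ell$-group makes sense of $z := \sum_k i(g_k)/2^n$, which lies in $[0,u] = \mathbf{C}(M)$ since each summand is bounded by $u/2^n$. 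Then $\alpha(z) = \sum_k g_k/2^n = y$, so $\alpha$ is surjective.

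The point where care is needed is that $\alpha$, viewed via $\Gamma^{-1}$ as an $\ell$-group embedding of $\Gamma^{-1}(\mathbf{C}(M))$ into $H$, has to preserve the operation $x \mapsto x/2$. This holds because $H$ is a two-divisible Abelian $\ell$-group (Theorem 4.4 guarantees two-divisibility from the existence of a strict square root on $N$), so $H$ is representable and enjoys unique extraction of roots; an $\ell$-group homomorphism into such an $H$ automatically commutes with halving. Once this is in place, everything else in the proof is bookkeeping with Riesz decompositions and the universal property.
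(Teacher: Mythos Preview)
Your proof is correct and follows essentially the same strategy as the paper's. The paper organizes the equivalences as (i) $\Rightarrow$ (ii), (ii) $\Leftrightarrow$ (iii), (ii) $\Rightarrow$ (i), while you run the cycle (i) $\Rightarrow$ (iii) $\Rightarrow$ (ii) $\Rightarrow$ (i); your (i) $\Rightarrow$ (iii) invokes Corollary~\ref{co:RDP} directly whereas the paper goes back to step~(2) of the proof of Theorem~\ref{div7}, and your (iii) $\Rightarrow$ (ii) shifts by $mu$ rather than splitting $h=h^+-h^-$, but these are cosmetic differences and the (ii) $\Rightarrow$ (i) argument via the universal property and Riesz decomposition is identical.
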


\begin{proof}
Due to Mundici's representation of MV-algebras, we can assume that $(G,u)$ is a unital $\ell$-subgroup of the unital $\ell$-group $(H,u)$. Moreover, Theorem 4.4 says that $H$ is two-divisible, and $s(y)=(y+1)/2$, $y\in N$, is a unique strict square root on $N$. Let $r$ be a unique strict square root on $\mathbf C(M)$.

(i) $\Rightarrow$ (ii). If $\mathbf C(M)\cong N$, then statement (ii) follows from the proof (2) of Theorem \ref{div7}.

(ii) $\Rightarrow$ (iii). If $x\in N$, then by (ii), there is an integer $n\ge 0$ such that $2^nx \in G$. Then $2^nx\le 2^nu$, so by (RIP), there are $x_1,\ldots,x_{2^n}\in M$ such that $2^nx= x_1+\cdots+x_{2^n}$, so that  $x = x_1/2^n+\cdots+x_{2^n}/2^n$.

(iii) $\Rightarrow$ (ii). If $h\in N$, there is $n$ and $h_1,\ldots, h_{2^n}\in M$ such that $2^nh=h_1+\cdots+h_{2^n}$ and $2^nh\in G$. Let $h\in H^+$. Due to (RIP), there are elements $h_1,\ldots,h_k\in N$ such that $h=h_1+\cdots +h_k$. By (iii), every $h_i$ is of the form $h_i= x_{i1}/2^{n_i}+\cdots+ x_{i2^{n_i}}/2^{n_i}$. We can assume $n_1\le \cdots\le n_k$. Then $x_{ij}/2^{n_i}= 2^{n_k-n_i}x_{ij}/2^{n_k}$ and $2^{n_k-n_i}x_{ij}$ belongs to $G$. Therefore, for $n=n_k$, we have $2^{n}h\in G$. Finally, let $h\in H$. It is of the form $h= h^+-h^-$, where $h^+,h^-\in H^+$. Now, it is easy to see that there is $n\in \mathbb N$ such that $2^nh\in G$.

(ii) $\Rightarrow$ (i). Let $i_C:M\to \mathbf C(M)$ be the embedding from the definition of the strict square root closure and let $j_N:M\to N$ be the inclusion embedding, $j_N(x)=x$, $x\in M$. There is a unique injective homomorphism $\alpha: \mathbf C(M)\to N$ such that $\alpha\circ i_C=j_N$ and $\alpha \circ r= s\circ \alpha$.

Since (ii) and (iii) are equivalent, there are an integer $n$ and elements $x_1,\ldots,x_{2^n}\in M$ such that $x=x_1/2^n+\cdots+x_{2^n}/2^n$. Since every $x_i \in M$, then $i_C(x_i), i_C(x_i)/2^n \in \mathbf C(M)$. Moreover, $i_C(x_1)/2^n+\cdots+ i_C(x_{2^n})= i_C(x_1)/2^n\oplus \cdots\oplus i_C(x_{2^n})\in \mathbf C(M)$. Applying $\alpha$, we get
\begin{align*}
\alpha(i_C(x_1)/2^n)+\cdots+ \alpha(i_C(x_{2^n})/2^n) &=
\alpha(i_C(x_1))/2^n+\cdots+ \alpha(i_C(x_{2^n}))/2^n\\
&= j_N(x_1)/2^n+\cdots + j_N(x_{2^n})/2^n\\
&= x_1/2^n+\cdots + x_{2^n}/2^n=x,
\end{align*}
which proves that $\alpha(\mathbf C(M))=N$, and $\alpha$ is an injective and surjective homomorphism. Consequently $N\cong \mathbf C(M)$.
\end{proof}

\section{Square root closure and strict square root closure}%6

In this section, we introduce the notion of a square root closure of an MV-algebra, and we compare it with the strict square root.

\begin{defn}\label{n1}
A pseudo MV-algebra $M$ is called {\em Boolean subdirectly irreducible} if  each subdirect embedding $f:M\to \prod_{i\in J}M_i$ can be reduced to
the embedding $f_2:M\to \prod_{i\in J_2}M_i$, where $f_2(x)=(\pi_i\circ f(x))_{i\in J_2}$, and $J_2:=\{i\in J\mid M_i \mbox{ is not a Boolean algebra}\}$.
Equivalently,
\begin{eqnarray}
\label{ne1}  f^{-1}(\{(x_i)_{i\in J}\in\prod_{i\in I}M_i\mid x_i=0,~\forall i\in J_2\})=\Ker f_2=\{0\}.
\end{eqnarray}
\end{defn}

\begin{thm}\label{n3}
Let $M$ be a pseudo MV-algebra. Then $M$ is Boolean subdirectly irreducible  \iff each one-to-one homomorphism $f:M\to N_1\times N_2$ of pseudo MV-algebras, where $N_1$ is a Boolean algebra and $N_2$ is a pseudo MV-algebra, can be reduced to the embedding $\pi_2\circ f:M\to N_2$.
\end{thm}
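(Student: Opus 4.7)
The plan is to prove the equivalence by reducing to the definition of Boolean subdirect irreducibility via two elementary facts: (a) a product of Boolean algebras is itself a Boolean algebra, and (b) a subalgebra of a Boolean algebra is Boolean. These let me freely pass between ``many Boolean factors on one side'' and ``a single Boolean factor on the left.''

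For the forward direction ($\Rightarrow$), I would start with a one-to-one homomorphism $f\colon M\to N_1\times N_2$ with $N_1$ Boolean, and replace $N_i$ by $\pi_i(f(M))$ to obtain a genuine subdirect embedding $\bar f\colon M\to \bar N_1\times \bar N_2$. By fact~(b), $\bar N_1$ is Boolean. If $\bar N_2$ is not Boolean, then in the notation of Definition \ref{n1} the index set $J_2$ consists only of the index for $\bar N_2$, and Boolean subdirect irreducibility gives that $f_2 = \bar\pi_2\circ \bar f$ is injective; this is exactly the injectivity of $\pi_2\circ f\colon M\to N_2$. The degenerate sub-case where $\bar N_2$ is also Boolean forces $J_2=\emptyset$ in \eqref{ne1}, which by the Boolean subdirect irreducibility of $M$ forces $M=\{0\}$, and the claim is then vacuous.

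For the reverse direction ($\Leftarrow$), I would take an arbitrary subdirect embedding $f\colon M\to \prod_{i\in J}M_i$, partition $J = J_1\cup J_2$ where $J_1=\{i\in J\mid M_i \text{ is Boolean}\}$, and set $N_1:=\prod_{i\in J_1}M_i$ and $N_2:=\prod_{i\in J_2}M_i$. By fact~(a), $N_1$ is a Boolean algebra. Rewriting $f$ as a one-to-one homomorphism $\tilde f\colon M\to N_1\times N_2$ and invoking the hypothesis of the theorem, $\pi_2\circ\tilde f$ is injective; but this map is precisely $f_2$ in the sense of Definition \ref{n1}, so $\Ker f_2=\{0\}$, i.e.\ $M$ is Boolean subdirectly irreducible.

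The only real subtlety is the degenerate case in the forward direction when the residual factor $\bar N_2$ happens to be Boolean; I handle it by noting that the definition literally forces $M$ to be trivial there. Otherwise the proof is a straightforward bookkeeping argument between binary products and arbitrary products, and nothing deeper than (a) and (b) is needed.
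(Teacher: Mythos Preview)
Your proof is correct and takes essentially the same approach as the paper: the reverse directions are identical, and in the forward direction you pass to a subdirect embedding by restricting to the images $\bar N_i=\pi_i(f(M))$, while the paper equivalently passes to the quotients $M\to M/I\times M/J$ with $I=\Ker(\pi_2\circ f)$ and $J=\Ker(\pi_1\circ f)$ (the same data, via the first isomorphism theorem). You are in fact slightly more careful than the paper, since you explicitly treat the degenerate case where $\bar N_2$ is Boolean and hence $J_2=\emptyset$, which the paper's argument glosses over.
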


\begin{proof}
First, assume that $M$ is Boolean subdirectly irreducible. Let $f:M\to N_1\times N_2$ be a one-to-one homomorphism, where $N_1$ and $N_2$ are a Boolean algebra and a pseudo MV-algebra, respectively.
It suffices to show that $\Ker(\pi_2\circ f)=\{0\}$. Clearly, $\Ker(\pi_2\circ f)=f^{-1}(N_1\times \{0\})$. Set
$I:=f^{-1}(N_1\times \{0\})$ and $J=f^{-1}(\{0\}\times N_2)$. We have $I\cap J=\{0\}$, so $g:M\to M/I\times M/J$ defined by
$g(x)=(x/I,x/J)$ is a subdirect embedding, where $M/I$ is isomorphic to a subalgebra of $N_2$ and $M/J$ is isomorphic to a subalgebra of $N_1$.
Since $N_1$ is a Boolean algebra, by the assumption $\pi_2\circ g:M\to M/I$ is an embedding, which implies that $I=\{0\}$.
Therefore, $\pi_2\circ f:M\to N_2$ is an injective homomorphism.

Conversely, let $f:M\to \prod_{i\in I}M_i$ be a subdirect embedding. Set $I_1=\{i\in I\mid M_i \mbox{ is a Boolean algebra}\}$ and $I_2=I\setminus I_1$.
Suppose that $E_j=\prod_{i\in I_j}M_i$ for $j=1,2$. Consider the map $g:M\to E_1\times E_2$ defined by $g(x)=((\pi_i\circ f(x))_{i\in I_1},(\pi_i\circ f(x))_{i\in I_2})$.
The map $g$ is a one-to-one homomorphism of pseudo MV-algebras. Since $E_1$ is a Boolean algebra, by the assumption,
$\{0\}=\Ker(\pi_2\circ g)=g^{-1}(E_1\times\{0\})=\{x\in M\mid \pi_i\circ f(x)=0,~ \forall i\in I_2\}$.

It follows that $f^{-1}(\{(x_i)_{i\in I}\in\prod_{i\in I}M_i\mid x_i=0,~\forall i\in I_2\})=\{0\}$.
Hence, $M$ is Boolean subdirectly irreducible.
\end{proof}

The following theorem shows a relation between Boolean subdirectly irreducible pseudo MV-algebras and strict pseudo MV-algebras.
Hence, it will provide us with many examples of Boolean subdirectly irreducible pseudo MV-algebras.

\begin{thm}\label{n6}
Let $M$ be a pseudo MV-algebra with a square root $r$. Then $M$ is Boolean subdirectly irreducible \iff $r$ is strict.
\end{thm}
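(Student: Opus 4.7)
The plan is to pivot both implications on the equivalent characterization of Boolean subdirect irreducibility supplied by Theorem~\ref{n3}, and to invoke two structural results about pseudo MV-algebras with square root: \cite[Prop~3.9]{DvZa3}, which transfers the square root (and its strictness) to any quotient $M/I$ by a normal ideal $I$, and \cite[Thm~3.8]{DvZa3}, which forces $r(0)=0$ on any Boolean algebra carrying a square root. The second main ingredient is the canonical decomposition $M\cong[0,w]\times[0,w^-]$ from \cite[Thm~4.3]{DvZa3}, where $w:=r(0)^-\odot r(0)^-$ is Boolean, the factor $[0,w]$ is a Boolean algebra, and $[0,w^-]$ is strict by \cite[Cor~4.5]{DvZa3}.

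For the direction $r$ strict $\Rightarrow$ $M$ Boolean subdirectly irreducible, I would take an arbitrary one-to-one homomorphism $f\colon M\to N_1\times N_2$ with $N_1$ Boolean (which by Theorem~\ref{n3} is enough) and set $J:=\Ker(\pi_1\circ f)=f^{-1}(\{0\}\times N_2)$. The first isomorphism theorem embeds $M/J$ into $N_1$, so $M/J$ is Boolean. The induced square root $r_J(x/J):=r(x)/J$ is still strict because $r$ is, but on a Boolean algebra \cite[Thm~3.8]{DvZa3} forces $r_J(0/J)=0/J$. Strictness then collapses $M/J$: $0/J=(0/J)^-=1/J$, so $J=M$; hence $f(M)\subseteq\{0\}\times N_2$ and $\pi_2\circ f$ is injective because $f$ is.

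For the converse, assume $M$ is Boolean subdirectly irreducible. Apply Theorem~\ref{n3} to the isomorphism $f\colon M\to[0,w]\times[0,w^-]$ furnished by \cite[Thm~4.3]{DvZa3} (with $N_1=[0,w]$ Boolean); then $\pi_2\circ f$ must be injective. Since $w$ is Boolean, every $x\in M$ satisfies $x=(x\wedge w)\vee(x\wedge w^-)$, so $\Ker(\pi_2\circ f)=[0,w]$, and injectivity forces $w=0$, i.e., $r(0)^-\odot r(0)^-=0$. This yields $r(0)^-\le r(0)$, and inequality~(4.1) of \cite{DvZa3} (already used in Lemma~\ref{div01}) upgrades this to $r(0)^-=r(0)$; that is, $r$ is strict.

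The conceptual crux of the argument is the observation that a Boolean algebra supporting a strict square root must be trivial; once that is extracted, everything else is bookkeeping. The only minor subtlety is correctly identifying the kernel of $\pi_2\circ f$ in the canonical decomposition as the Boolean factor $[0,w]$, which is what translates Boolean subdirect irreducibility cleanly into $w=0$.
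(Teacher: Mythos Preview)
Your proof is correct and follows essentially the same approach as the paper: both directions pivot on Theorem~\ref{n3}, the forward one uses that a Boolean quotient inheriting a strict square root must be degenerate (via \cite[Thm~3.8]{DvZa3}), and the converse reads off $w=0$ from the canonical decomposition of \cite[Thm~4.3]{DvZa3}. The only cosmetic difference is that in the forward direction the paper takes $f$ to be a subdirect embedding and transfers $r$ to the Boolean factor $B$ via the surjection $\pi_1\circ f$, whereas you work with the quotient $M/J$ for $J=\Ker(\pi_1\circ f)$; since $M/J\cong\mathrm{Im}(\pi_1\circ f)$ these are the same object, and your formulation has the slight advantage of not needing surjectivity onto $N_1$.
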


\begin{proof}
Let $r$ be a strict square root. If there exists a subdirect embedding $f:M\to B\times A$, where $B$ is a Boolean algebra and $A$ is a pseudo MV-algebra, then the onto homomorphism $\pi_1\circ f:M\to B$ induces a square root $r_1:B\to B$ defined by
$r_1(\pi_1\circ f(x))=\pi_1\circ f(r(x))$ for all $x\in M$. It follows that $r_1(0)=\pi_1\circ f(r(0))=\pi_1\circ f(r(0)^-)=(\pi_1\circ f(r(0)))^-=r_1(0)^-$.
On the other hand, $r_1(0)=0$, since $B$ is a Boolean algebra (see \cite[Thm 3.8]{DvZa3}).
Hence $0=1$ on $B$, and so $B$ has only one element. It follows that $\Ker(\pi_2\circ f)=f^{-1}(B\times \{0\})=\{0\}$.
Therefore, by Theorem \ref{n3}, $M$ is Boolean subdirectly irreducible.

Conversely, let $M$ be Boolean subdirectly irreducible. By \cite[Thm 4.3]{DvZa3}, $M\cong [0,v]\times [0,v^-]$ where $v=r(0)^-\odot r(0)^-$; the mapping $f:M\to [0,v]\times [0,v']$ defined by $f(x)=(x\wedge v, x\wedge v')$ is an isomorphism

In addition, $[0,v]$ is a Boolean algebra and $[0,v^-]$ is a strict pseudo MV-algebra.
By the assumption, $f_2:M\to [0,v^-]$ defined by $f_2(x)=x\wedge v^-$ is an embedding, so $v=0$.
It follows that $r$ is strict.
\end{proof}

\begin{exm}\label{example}
(i) Let $S$ be a non-empty subset and for each $i\in S$, let $A_i$ be the finite MV-algebra $M_n=\{0,1/n,\ldots,n/n\}$, where $n\in\mathbb N$. By Theorem 4.4, the MV-algebra $\prod_{i\in S}A_i$ is Boolean subdirectly irreducible, but it has no square root.

(ii) Each linearly ordered pseudo MV-algebra with more than two elements is Boolean subdirectly irreducible. In addition, a direct product of any family of such pseudo MV-algebras is Boolean subdirectly irreducible.
\end{exm}

\begin{prop}\label{n2}
A representable pseudo MV-algebra $(M;\oplus,^-,^\sim,0,1)$ is Boolean subdirectly irreducible \iff $\bigcap X(M)_2=\{0\}$.
\end{prop}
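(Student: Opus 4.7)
The plan is to prove both directions via Theorem \ref{n3} together with the natural representation of $M$ as a subdirect product of chains.

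For the forward direction, I would apply the definition of Boolean subdirect irreducibility to the canonical subdirect embedding $\iota\colon M \to \prod_{P \in X(M)} M/P$ supplied by representability. Each factor $M/P$ is linearly ordered, so $M/P$ is a Boolean algebra \iff $|M/P| = 2$ \iff $P \in X(M)_1$. Hence the reduction of $\iota$ to the non-Boolean factors is precisely the natural map $M \to \prod_{P \in X(M)_2} M/P$, whose kernel is $\bigcap X(M)_2$; Boolean subdirect irreducibility forces this kernel to be $\{0\}$.

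For the converse, assume $\bigcap X(M)_2 = \{0\}$ and, via Theorem \ref{n3}, take an arbitrary embedding $f\colon M \to N_1 \times N_2$ with $N_1$ Boolean; my goal is $I := \Ker(\pi_2\circ f) = \{0\}$. Setting $J := \Ker(\pi_1\circ f)$, one has $I \cap J = \{0\}$ and $M/J$ Boolean. If $x \in I\setminus \{0\}$, then $f(x) = (a,0)$ with $a$ Boolean in $N_1$, so $f(x)\oplus f(x) = f(x)$; injectivity of $f$ promotes this to $x \oplus x = x$, making $x$ Boolean in $M$, and $[0,x] \subseteq I$ because $y \le x$ forces $\pi_2(f(y)) \le \pi_2(f(x)) = 0$. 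By \cite[Thm 4.3]{DvZa3}, $M \cong [0,x] \times [0,x^-]$; writing normal ideals of this product as $I = I_1 \times I_2$ and $J = J_1 \times J_2$, the inclusion $[0,x] \subseteq I$ gives $I_1 = [0,x]$, and then $I \cap J = \{0\}$ forces $J_1 = \{0\}$. Consequently $M/J \cong [0,x] \times ([0,x^-]/J_2)$ contains $[0,x]$ as a direct factor of a Boolean algebra, so $[0,x]$ itself is Boolean. Finally, pick $P \in X(M)_2$ with $x \notin P$; since $M/P$ is a non-Boolean chain whose only Boolean elements are $0$ and $1$, the nonzero Boolean image $x/P$ equals $1$, so $x^- \in P$. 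Under the decomposition $x^-$ corresponds to $(0,x^-)$, yielding $\{0\}\times [0,x^-] \subseteq P$, so $M/P$ is a quotient of the Boolean algebra $[0,x]$ and is itself Boolean --- contradicting $P \in X(M)_2$. Hence $I = \{0\}$.

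The hard part will be producing the Boolean-ness of $[0,x]$ itself: the prime $P \in X(M)_2$ alone only pins $x/P$ to $\{0,1\}$, and the inevitable case $x/P = 1$ leads nowhere unless one can factor $M/P$ through $[0,x]$. This is exactly where the companion ideal $J = \Ker(\pi_1\circ f)$ enters, through the joint constraints $I \cap J = \{0\}$ and $M/J$ Boolean, which together realize $[0,x]$ as a direct factor of the Boolean algebra $M/J$.
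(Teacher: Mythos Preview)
Your forward direction matches the paper's exactly. Your converse is correct but takes a genuinely different route from the paper. The paper works directly with an arbitrary subdirect embedding $g\colon M\to\prod_i M_i$ (reduced to chains via representability), observes that each $\Ker(\pi_i\circ g)$ with $i\in I_2$ lies in $X(M)_2$, and then uses a primality argument against the complementary ideal $\bigcap X(M)_1$: if some $a$ survived in $\Ker(g_2)\setminus\bigcap X(M)_2$, a prime $P\in X(M)_2$ avoiding $a$ would be forced to contain the Boolean ideal $\bigcap X(M)_1$ and hence itself be Boolean. Your approach instead invokes Theorem~\ref{n3}, isolates a single Boolean element $x$, and derives the contradiction from the direct-product splitting $M\cong[0,x]\times[0,x^-]$. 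Both arguments ultimately hinge on the same friction---a non-Boolean prime cannot contain a Boolean ideal with trivial complement---but yours localises it to one element while the paper's stays purely ideal-theoretic.

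Two remarks. First, the citation \cite[Thm 4.3]{DvZa3} is not quite the right reference for $M\cong[0,x]\times[0,x^-]$: that theorem concerns the square-root decomposition specifically, whereas you only need the standard fact that any Boolean element of a pseudo MV-algebra splits it as a direct product. Second, your ``hard part''---showing $[0,x]$ is Boolean---is actually immediate and does not require the detour through $J$: you already observed $[0,x]\subseteq I$, and for every $y\in I$ one has $f(y)\in N_1\times\{0\}$, hence $f(y)\oplus f(y)=f(y)$ and so $y\oplus y=y$. Thus every element of $[0,x]$ is idempotent, and $[0,x]$ is Boolean outright. The analysis of $J$ and the product decomposition of the ideals can be dropped.
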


\begin{proof}
Let $M$ be a representable pseudo MV-algebra.
If $M$ is Boolean subdirectly irreducible, then the subdirect embedding $f:M\to \prod_{P\in X(M)}M/P$ implies that
$f_2:M\to \prod_{P\in X(M)_2}M/P$ defined by $f_2(x)=(x/P)_{P\in X(M)_2}$ is an embedding, so
$\bigcap X(M)_2=\bigcap_{P\in X(M)_2}\Ker(\pi_P\circ f)=\Ker(f_2)=\{0\}$.

Conversely, assume that
$\bigcap X(M)_2=\{0\}$. Let $g:M\to \prod_{i\in I}M_i$ be a subdirect embedding. Without loss of generality, we can assume that each $M_i$ is linearly ordered.
Let $I_1=\{i\in I\mid M_i \mbox{ is a Boolean algebra}\}$ and $I_2=I\setminus I_1$.
Consider the map $g_2:M\to \prod_{i\in I_2}M_i$ defined by $g_2(x)=(\pi_i\circ g(x))_{i\in I_2}$.
Clearly, $\Ker(g_2)=\bigcap_{i\in I_2}\Ker(\pi_i\circ g)$.
For each $i\in I_2$, $M/\Ker(\pi_i\circ g)\cong M_i$, so $\Ker(\pi_i\circ g)\in X(M)_2$, consequently, $\bigcap X(M)_2\s \bigcap_{i\in I_2}\Ker(\pi_i\circ g)$.
Similarly, $\Ker(\pi_i\circ g)\in X(M)_1$ for all $i\in I_1$ which implies that $\bigcap X(M)_1\s \bigcap_{i\in I_1}\Ker(\pi_i\circ g)$.
Since $(\bigcap_{i\in I_2}\Ker(\pi_i\circ g))\cap (\bigcap_{i\in I_1}\Ker(\pi_i\circ g))=\{0\}$, we have $(\bigcap_{i\in I_2}\Ker(\pi_i\circ g))\cap \bigcap X(M)_1=\{0\}$.
We claim that $\bigcap X(M)_2=\bigcap_{i\in I_2}\Ker(\pi_i\circ g)$.
If there exists $a\in (\bigcap_{i\in I_2}\Ker(\pi_i\circ g))\setminus \bigcap X(M)_2$, then there exists $P\in X(M)_2$ such that $a\notin P$.
For each $b\in \bigcap X(M)_1$, $a\wedge b\in (\bigcap_{i\in I_2}\Ker(\pi_i\circ g))\cap \bigcap X(M)_1=\{0\}\s P$, so $b\in P$, that is $\bigcap X(M)_1\s P$.
By definition, $X(M)_1=\{Q\in X(M)\mid M/Q \mbox{ is a Boolean algebra}\}$, so each element of $X(M)_1$ is a Boolean ideal of $M$, whence $\bigcap X(M)_1$ is a
Boolean ideal.
Hence, $P$ is a Boolean ideal of $M$ (since $\bigcap X(M)_1\s P$) which contradicts with $P\in X(M)_2$. Therefore, $\{0\}=\bigcap X(M)_2=\bigcap_{i\in I_2}\Ker(\pi_i\circ g)=\Ker(g_2)$ implying
$g_2$ is one-to-one and $M$ is Boolean subdirectly irreducible.
\end{proof}

\begin{cor}\label{n4}
If  $(M;\oplus,^-,^\sim,0,1)$ is a representable pseudo MV-algebra, then $M/I_2$ is  Boolean subdirectly irreducible.
\end{cor}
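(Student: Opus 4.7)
The plan is to apply Proposition \ref{n2} to the quotient $M/I_2$, so the task reduces to verifying two things: that $M/I_2$ is representable, and that $\bigcap X(M/I_2)_2=\{0/I_2\}$.

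First, representability of $M/I_2$ should follow from the fact that $M$ is representable and the class of representable pseudo MV-algebras is closed under quotients (it is a variety, equivalently, it is closed under homomorphic images because, on the $\ell$-group side, representable $\ell$-groups form a variety). Since $I_2$ is an intersection of normal ideals, it is itself a normal ideal, so $M/I_2$ is a well-defined pseudo MV-algebra, inheriting representability from $M$.

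Next I would invoke the standard correspondence between normal (prime) ideals of $M/I_2$ and normal (prime) ideals of $M$ containing $I_2$, via $P \longleftrightarrow P/I_2$. Under this correspondence one has $(M/I_2)/(P/I_2)\cong M/P$, so $P/I_2$ belongs to $X(M/I_2)_2$ precisely when $M/P$ is not a Boolean algebra, i.e., when $P\in X(M)_2$. Crucially, every $P\in X(M)_2$ automatically contains $I_2=\bigcap X(M)_2$, so the map $P\mapsto P/I_2$ sends $X(M)_2$ onto $X(M/I_2)_2$.

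With this in hand, I would compute
\begin{equation*}
\bigcap X(M/I_2)_2 \;=\; \bigcap_{P\in X(M)_2} P/I_2 \;=\; \Bigl(\bigcap_{P\in X(M)_2} P\Bigr)\Big/I_2 \;=\; I_2/I_2 \;=\; \{0/I_2\},
\end{equation*}
where the second equality uses that $I_2\subseteq P$ for every $P$ in the intersection, so $x/I_2\in P/I_2$ is equivalent to $x\in P$. Then Proposition \ref{n2} applied to the representable pseudo MV-algebra $M/I_2$ yields that $M/I_2$ is Boolean subdirectly irreducible. I do not expect a serious obstacle: the only point that requires care is the justification that every normal prime ideal of $M/I_2$ arises from one in $X(M)$ containing $I_2$ (the correspondence theorem for normal ideals of pseudo MV-algebras), and that $I_2$ is contained in each element of $X(M)_2$ by its very definition; both are routine.
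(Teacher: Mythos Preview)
Your proposal is correct and follows essentially the same route as the paper: both argue that $M/I_2$ is representable, use the correspondence $P\leftrightarrow P/I_2$ together with $(M/I_2)/(P/I_2)\cong M/P$ to identify $X(M/I_2)_2=\{P/I_2\mid P\in X(M)_2\}$, compute $\bigcap X(M/I_2)_2=I_2/I_2=\{0/I_2\}$, and then invoke Proposition~\ref{n2}. Your write-up is in fact slightly more explicit than the paper's about why $I_2$ is normal and why each $P\in X(M)_2$ contains $I_2$.
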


\begin{proof}
Clearly, $M/I_2$ is representable. Let $Q\in X(M/I_2)_2$. Then $Q=P/I_2$ for some prime ideal $P$ of $M$ containing $I_2$.
Since $\frac{M/I_2}{P/I_2}\cong M/P$, we get $M/P$ is not a Boolean algebra, which entails that $P\in X(M)_2$. Similarly, the converse also holds.
Whence $X(M/I_2)_2=\{P/I_2\mid P\in X(M)_2\}$ and so $\bigcap X(M/I_2)_2=I_2/I_2=\{0/I_2\}$. By Proposition \ref{n2},
$M/I_2$ is Boolean subdirectly irreducible.
\end{proof}

\begin{thm}\label{n5}
Let $(M;\oplus,^-,^\sim,0,1)$ and $(N;\oplus,^-,^\sim,0,1)$ be pseudo MV-algebras with square roots $r$ and $s$, respectively, and let $f:M\to N$ be a homomorphism of pseudo MV-algebras. If $v=r(0)^-\odot r(0)^-$, $w=s(0)^-\odot s(0)^-$, $M_1$, $M_2$, $N_1$ and $N_2$ be the pseudo MV-algebras defined in \cite[Thm 4.3]{DvZa3}, then the following statements hold:
\begin{itemize}[nolistsep]
\item[{\rm (i)}] $f(M_2)\s N_2$;
\item[{\rm (ii)}] $f(M_1)$ is isomorphic to a subalgebra of $N_1\times A$, where $A=[0,f(v)\wedge w']$ is a Boolean  subdirectly irreducible subalgebra of $N_2$.
\end{itemize}
\end{thm}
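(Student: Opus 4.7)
\emph{Plan.} The key observation, which does not require $f$ to preserve square roots, is the inequality $f(r(0))\le s(0)$: since $f(r(0))\odot f(r(0))=f(r(0)\odot r(0))=f(0)=0$ and $s(0)$ is characterised as the greatest element of $N$ whose $\odot$-square vanishes, the inequality follows. Taking negations and using monotonicity of $\odot$ in both arguments, I obtain $f(v)=f(r(0))^-\odot f(r(0))^-\ge s(0)^-\odot s(0)^-=w$. Because $v$ and $w$ are Boolean by \cite[Prop 3.3(11)]{DvZa3} and $f$ preserves Boolean elements, $f(v)$ is Boolean in $N$, and $w\le f(v)$. This single inequality drives both parts.

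\emph{Part (i).} For $x\in M_2=[0,v^-]$, Booleanness of $v$ gives $v^-=v^\sim$, hence $f(x)\le f(v^-)=f(v)^-\le w^-$, so $f(x)\in[0,w^-]=N_2$.

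\emph{Part (ii).} Since $w\le f(v)$ with both Boolean, applying \cite[Thm 4.3]{DvZa3} inside the pseudo MV-algebra $[0,f(v)]$ along the Boolean element $w$ yields $[0,f(v)]\cong [0,w]\times [0,f(v)\wedge w^-]=N_1\times A$; pushing $f(M_1)\subseteq[0,f(v)]$ through this isomorphism via $y\mapsto(y\wedge w,y\wedge w^-)$ exhibits $f(M_1)$ as a subalgebra of $N_1\times A$. To conclude that $A=[0,f(v)\wedge w^-]$ is Boolean subdirectly irreducible I invoke Theorem \ref{n6}, so it suffices to produce a strict square root on $A$. Writing $a:=f(v)\wedge w^-$, which is Boolean in $N$ and lies in $N_2$, I use \cite[Thm 4.3]{DvZa3} once more to decompose $N_2$ as a direct product with $A$ as one factor. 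Since $N_2$ carries a strict square root (being the strict factor of $N$) and a direct factor of a strict pseudo MV-algebra is again strict, $A$ inherits the required strict square root, and Theorem \ref{n6} then delivers Boolean subdirect irreducibility.

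\emph{Main obstacle.} The substantive step is the opening inequality $f(r(0))\le s(0)$; the remainder is bookkeeping with two mutually compatible Boolean decompositions of $N$ (along $w$ and along $a$). The delicate point is to keep track of the fact that complementation inside $N_2$ is not the restriction of complementation inside $N$, so some care is needed to check that the induced square root on $A$ is indeed strict in the pseudo MV-algebra structure of $A$ with top element $a$.
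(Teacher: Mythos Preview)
Your proposal is correct and follows essentially the same route as the paper: both derive $w\le f(v)$ from $f(r(0))\le s(0)$, read off (i), and for (ii) use the Boolean decomposition $[0,f(v)]\cong N_1\times A$ together with $N_2\cong A\times[0,f(v^-)]$. The only cosmetic difference is that the paper verifies Boolean subdirect irreducibility of $A$ directly from the definition (using that $N_2$ is Boolean subdirectly irreducible), whereas you argue that $A$ inherits a strict square root as a direct factor of $N_2$ and then invoke Theorem~\ref{n6}; these are the same argument up to one application of the equivalence in Theorem~\ref{n6}. (Minor quibble: the splittings you attribute to \cite[Thm~4.3]{DvZa3} are just the standard decompositions along a Boolean element, not that particular theorem.)
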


\begin{proof}
According to \cite[Thm 4.3]{DvZa3}, assume $M_1=[0,v]$, $M_2=[0,v^-]$, $N_1=[0,w]$ and $N_2=[0,w^-]$.
From $f(r(0))\odot f(r(0))=f(0)=0$ it follows that $f(r(0))\leq s(0)$, so $s(0)^-\leq f(r(0)^-)$ which entails that
$w=s(0)^-\odot s(0)^-\leq f(r(0)^-)\odot f(r(0)^-)=f(v)$. Thus, $f(v^-)\leq w^-$.
Hence, $[0,f(v^-)]\s [0,w^-]=N_2$ and $[0,w]\s [0,f(v)]$.

Let $Q\in X(N)$. Since $Q$ is a proper prime ideal of $N$ and $f(v)\wedge f(v^-)=0\in Q$, only one of the statements $f(v)\in Q$ or $f(v^-)\in Q$ holds.
So, $X(N)$ is union of the disjoint sets $\{Q\in X(N)\mid f(v)\in Q\}$ and $\{Q\in X(N)\mid f(v^-)\in Q\}$.
On the other hand, $f(v)=(f(v)\wedge w)\vee (f(v)\wedge w^-)=w\vee (f(v)\wedge w^-)$, so
$[0,f(v)]\cong [0,w]\times [0,f(v)\wedge w^-]$.  It follows that $N\cong [0,w]\times [0,f(v)\wedge w^-]\times [0,f(v^-)]$.
Each set $[0,w]$, $[0,f(v)\wedge w^-]$ and $[0,f(v^-)]$ is a normal ideal of $N$ and so:

(i) If $x\in M_2$, then $f(x)\leq f(v^-)\leq w'$, so $f(x)\in [0,w^-]=N_2$. That is, $f(M_2)\s N_2$.

(ii) If $x\in M_1$, then $f(x)\leq f(v)$ and so $f(x)=(f(x)\wedge w)\vee (f(x)\wedge w^-)=(f(x)\wedge w)\vee (f(x)\wedge f(v)\wedge w^-)$.
Consider the injective homomorphism $g:M_1\to  N_1\times [0,f(v)\wedge w^-]$ defined by $g(x)=(f(x)\wedge w, f(x)\wedge f(v)\wedge w^-)$.
We show that $[0,f(v)\wedge w^-]$ is a Boolean  subdirectly irreducible subalgebra of $N_2$.
Since $f(v)\wedge w^-$ is a Boolean element of $N$, then clearly $[0,f(v)\wedge w^-]$ is a subalgebra of $[0,w^-]$.
Let $g:[0,f(v)\wedge w^-]\to B\times D$ be a subdirect embedding of pseudo MV-algebras, where $B$ is a Boolean algebra and $D$ is a pseudo MV-algebra.

We claim that $\pi_2\circ g:[0,f(v)\wedge w^-]\to D$ is one-to-one, equivalently, $g^{-1}(B\times\{0\})=\{0\}$.

Since $w^-=(f(v)\wedge w^-)\vee (f(v^-)\wedge w^-)=(f(v)\wedge w^-)\vee f(v^-)$ and $f(v),f(v^-),w^-\in \B(N)$, we get that
$[0,w^-]\cong[0,f(v)\wedge w^-]\times [0,f(v^-)]$ and so $\varphi:[0,w^-]\to (B\times D)\times [0,f(v^-)]$ defined by
$\varphi(x)=(g(x\wedge f(v)\wedge w^-),x\wedge f(v^-))$ is a subdirect embedding. Due to Theorem \ref{n6},
$[0,w^-]$ is Boolean subdirectly irreducible, so by (\ref{ne1}), $\varphi^{-1}(B\times \{0\}\times \{0\})=\{0\}$, consequently, by definition of $\varphi$,
$g^{-1}(B\times \{0\})=\{0\}$. So, the claim is true, and $[0,f(v)\wedge w^-]$ is a Boolean  subdirectly irreducible subalgebra of $N_2$.
\end{proof}

Now, we are ready to propose the main theorems of this part that will provide a square root closure (not necessarily strict) for MV-algebras. First, we state the following remark.

\begin{rmk}\label{n7}
Let $f:M\to N$ be an injective homomorphism of MV-algebras and $s:N\to N$ be a square root of $N$.
Let $I_1=\bigcap X(M)_1$ and $I_2=\bigcap X(M)_2$.
If $w=s(0)^-\odot s(0)^-$ and $N_1$ and $N_2$ be the MV-algebras that were introduced in \cite[Thm 4.3]{DvZa3}, then:

(i) The set $\{f^{-1}(Q)\mid Q\in X(N)\}$ represents each proper ideal $I$ of $M$, that is,
for each proper ideal $I$ of $M$, we have $I=\bigcap\{f^{-1}(Q)\mid Q\in X(N) \mbox{ and } f(I)\s Q\}$.
Choose a proper ideal $I$ of $M$. Clearly, $\downarrow f(I)=\{y\in N\mid y\leq f(x),~\exists x\in I\}$
is the least ideal of $N$ containing $f(I)$. It follows that $\downarrow f(I)=\bigcap\{Q\in X(N)\mid \downarrow f(I)\s Q\}=\bigcap\{Q\in X(N)\mid f(I)\s Q\}$ and so
$f^{-1}(\downarrow f(I))=\bigcap\{f^{-1}(Q)\mid Q\in X(N) \mbox{ and }f(I)\s Q\}$. On the other hand, if $x\in f^{-1}(\downarrow f(I))$, then $f(x)\leq f(a)$ for some $a\in I$,
whence $x\leq a$, since $f(x)=f(x\wedge a)$ and $f$ is one-to-one. Thus, $x\in I$ and so $f^{-1}(\downarrow f(I))\s I$. Clearly, $I\s f^{-1}(\downarrow f(I))$. That is,
$I=f^{-1}(\downarrow f(I))$. Therefore, $I=\bigcap\{f^{-1}(Q)\mid Q\in X(N) \mbox{ and } f(I)\s Q\}$.

(ii) We divide $X(N)$ into three disjoint sets as follows:
\begin{eqnarray*}
X(N)_1,\quad Y_1=\{Q\in X(N)_2\mid f^{-1}(Q)\in X(M)_1\}, \quad Y_2=\{Q\in X(N)_2\mid f^{-1}(Q)\in X(M)_2\}.
\end{eqnarray*}
If $Q\in X(N)_1$, then $f^{-1}(Q)\in X(M)_1$, since $M/f^{-1}(Q)$ is embedded in $N/Q$ and $N/Q$ is a Boolean algebra.
So, $I_1=\bigcap X(M)_1\s \bigcap \{f^{-1}(Q)\mid Q\in X(N)_1\cup Y_1\}$. On the other hand, according to (i), $I_1=\bigcap\{f^{-1}(Q)\mid Q\in X(N) \mbox{ and } f(I_1)\s Q\}=\bigcap\{f^{-1}(Q)\mid Q\in X(N) \mbox{ and } I_1\s f^{-1}(Q)\}$.
For each $Q\in X(N)$, if $I_1\s f^{-1}(Q)$, then $f^{-1}(Q)$ is a Boolean ideal of $M$ and so $Q\in Y_1\cup X(N)_1$, whence
$\bigcap X(M)_1\s \bigcap \{f^{-1}(Q)\mid Q\in X(N)_1\cup Y_1\}\s \bigcap\{f^{-1}(Q)\mid Q\in X(N)_1\cup Y_1\mbox{ and } I_1\s f^{-1}(Q)\}=I_1$.
Hence
\begin{eqnarray*}
\label{901}
I_1=\bigcap \{f^{-1}(Q)\mid Q\in X(N)_1\cup Y_1\}=f^{-1}\big((\bigcap X(N)_1)\cap (\bigcap Y_1)\big).
\end{eqnarray*}
Furthermore, clearly, $I_2\s \bigcap\{f^{-1}(Q)\mid Q\in Y_2\}$.  By (i), $I_2=\bigcap\{f^{-1}(Q)\mid Q\in X(N) \mbox{ and } f(I_2)\s Q\}$.
So,
\begin{eqnarray*}
\label{902}
I_2=(\bigcap\{f^{-1}(Q)\mid f(I_2)\s Q\in Y_2\})\cap (\bigcap\{f^{-1}(Q)\mid f(I_2)\s Q\in X(N)_1\cup Y_1\}).
\end{eqnarray*}
It follows that $M/I_2$ can be embedded into $M/A_1\times M/A_2$ by the mapping $\alpha(x)=(x/A_1,x/A_2)$, where $A_2=\bigcap\{f^{-1}(Q)\mid f(I_2)\s Q\in Y_2\}$ and
$A_1=\bigcap\{f^{-1}(Q)\mid f(I_2)\s Q\in X(N)_1\cup Y_1\}$. Since $M/A_1$ is a Boolean algebra and by Corollary \ref{n4}, $M/I_2$ is Boolean subdirectly irreducible, so
$\pi_2\circ \alpha$ is one-to-one. It follows that $I_2=A_2= \bigcap\{f^{-1}(Q)\mid f(I_2)\s Q\in Y_2\}$, whence $\bigcap\{f^{-1}(Q)\mid Q\in Y_2\}\s I_2$.
Therefore,
\begin{eqnarray}
\label{903}
I_2=\bigcap\{f^{-1}(Q)\mid Q\in Y_2\}.
\end{eqnarray}
\end{rmk}

In Definition \ref{div6}, we introduced the concept of a strict square root closure, and in Theorem \ref{div7}, we showed that each MV-algebra has a strict square root closure. Now, we want to define also the concept of square root closure (not necessarily a strict one) for  MV-algebras.

\begin{defn}\label{n8}
Let $M$ be an MV-algebra. A {\em square root closure} for M is a triple $(D,r,i)$,
where $D$ is an MV-algebra, $r:D\to D$ is a square root, and $i : M\to D$ is an embedding of MV-algebras such that
if $N$ is an MV-algebra with square root $s$ and $j:M\to N$ is an injective homomorphism of MV-algebras, there exists a unique injective
homomorphism $\beta:D\to N$ satisfying the following conditions:
\begin{itemize}[nolistsep]
\item[{\rm (D1)}] $\beta\circ i = j$ ;
\item[{\rm (D2)}] $\beta(r(x)) \leq s(\beta(x))$ for all $x\in M$.
\end{itemize}
\end{defn}

\begin{rmk}\label{n9}
(i) Similarly to strict square root closures, we can show that if an MV-algebra $M$ has a square root closure, then it is unique up to isomorphism. So, if it exists, the square root closure of $M$ is denoted by $\mathbf D(M)$.
An MV-algebra is called {\it closed with respect to square root closure}, if $\mathbf D(M)\cong M$.

(ii) If $M$ is an MV-algebra with a square root $r$, then $(M,r,\id_M)$ is a square root closure of $M$.
Indeed, if $N$ is an MV-algebra with a square root $s$ and $j:M\to N$ is an injective homomorphism, then
for $\beta=j$ clearly (D1) holds. For establishing (D2), we have
$\beta(r(x))\odot \beta(r(x))=\beta(r(x)\odot r(x))=\beta(x)$, so $\beta(r(x))\leq s(\beta(x))$ for all $x\in M$.
Therefore, $(M,r,\id_M)$ is a square root closure of $M$. It follows that each Boolean algebra $B$ has a square root closure,
$(B,\id_B,\id_B)$, since the identity map is a square root on $B$. Hence, if $M$ has a square root, then
$M$ is closed with respect to square root closure. It shows an important difference between the ``strict square root closure'' and the ``square root closure''. According to Theorem \ref{div7}, the strict square closure of $B$, $\mathbf C(B)$, is two-divisible, so $\mathbf C(B)$ is not isomorphic to $B$. But $\mathbf D(B)\cong B$.
\end{rmk}

\begin{thm}\label{n90}
Let $M=\Gamma(G,u)$ be an MV-algebra such that $u/2\in M$ and let $\mathbf D(M)=(\Gamma(K,w),r,i)$ be its square root closure. Then $\mathbf D(M)$ is the strict square root closure of $M$.
\end{thm}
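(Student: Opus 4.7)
My strategy is to verify that $(\Gamma(K,w),r,i)$ satisfies the universal property of the strict square root closure. Since by Theorem~\ref{div7} the strict closure $\mathbf C(M)$ exists, and since it is unique up to isomorphism, this will force $\mathbf D(M)\cong \mathbf C(M)$. The first task is to show that $r$ is strict. Because $u/2\in M$ and $i$ is an MV-homomorphism, the identities $u/2\oplus u/2=u$ and $u/2\odot u/2=0$ in $M$ transfer to $i(u/2)\oplus i(u/2)=w$ and $i(u/2)\odot i(u/2)=0$ in $\Gamma(K,w)$. Translated into the unital $\ell$-group $(K,w)$, these give $i(u/2)+i(u/2)\geq w$ and $i(u/2)+i(u/2)\leq w$, hence $2i(u/2)=w$. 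Torsion-freeness of the Abelian $\ell$-group $K$ makes $w/2:=i(u/2)$ unambiguous. Since $r(0)\odot r(0)=0$ forces $r(0)\leq w/2$, while $w/2\odot w/2=0$ forces $w/2\leq r(0)$, we conclude $r(0)=w/2=r(0)^-$, so $r$ is strict.

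Now let $N$ be an arbitrary MV-algebra with strict square root $s$ and let $j:M\to N$ be an injective MV-homomorphism. By Theorem~4.4, $N\cong\Gamma(A,w_N)$ with $A$ a two-divisible Abelian $\ell$-group, and $s(y)=(y+w_N)/2$ for every $y\in N$. Applying the universal property of $\mathbf D(M)$ to the triple $(N,s,j)$ yields a unique injective homomorphism $\beta:\mathbf D(M)\to N$ with $\beta\circ i=j$ and $\beta(r(x))\leq s(\beta(x))$ for all $x\in M$. If this inequality can be upgraded to an equality, then $\beta$ witnesses the universal property of the strict square root closure; uniqueness in the strict sense is inherited from that in the non-strict sense because equality trivially implies the inequality.

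The main obstacle is therefore this upgrade from $\leq$ to $=$. Fix $x\in M$; since $\beta$ is a homomorphism, $\beta(r(x))\odot\beta(r(x))=\beta(r(x)\odot r(x))=\beta(x)$, which in the $\ell$-group $A$ reads $(2\beta(r(x))-w_N)\vee 0=\beta(x)$. When $\beta(x)>0$ this reduces to $2\beta(r(x))=\beta(x)+w_N$, and two-divisibility together with torsion-freeness of $A$ give the unique answer $\beta(r(x))=(\beta(x)+w_N)/2=s(\beta(x))$. When $\beta(x)=0$, the injectivity of $\beta\circ i=j$ forces $x=0$, so $\beta(r(0))=\beta(w/2)=w_N/2=s(0)$, where the middle equality is again by torsion-freeness of $A$ applied to $2\beta(w/2)=\beta(w)=w_N$. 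Hence (C2) holds, verifying that $\mathbf D(M)$ satisfies the universal property of $\mathbf C(M)$ and completing the identification $\mathbf D(M)\cong\mathbf C(M)$.
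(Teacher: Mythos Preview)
Your overall strategy matches the paper's: first show that $r$ is strict (which you do correctly, essentially as the paper does), then verify the universal property of the strict closure by obtaining $\beta$ from the universal property of $\mathbf D(M)$ and upgrading (D2) to (C2). However, your upgrade argument contains a genuine gap.

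You assert that from $(2\beta(r(x))-w_N)\vee 0=\beta(x)$, ``when $\beta(x)>0$ this reduces to $2\beta(r(x))=\beta(x)+w_N$''. This inference is invalid in a general Abelian $\ell$-group: in $\mathbb Z\times\mathbb Z$, for instance, $(1,-1)\vee(0,0)=(1,0)>(0,0)$ yet $(1,-1)\ne(1,0)$. The condition $\beta(x)\ne 0$ alone does not force $2\beta(r(x))-w_N\ge 0$, which is what you need in order to drop the $\vee\,0$. The gap is easily repaired: since square roots are monotone and you have already shown $r(0)=w/2$, we have $r(x)\ge w/2$ for every $x$, whence $\beta(r(x))\ge\beta(w/2)=w_N/2$ and therefore $2\beta(r(x))-w_N\ge 0$; now the $\vee\,0$ is redundant and the equality $2\beta(r(x))-w_N=\beta(x)$ follows for \emph{all} $x$, with no case split needed.

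The paper avoids this issue by a different route: having established that $r$ is strict, it invokes Theorem~4.4 to conclude that $K$ is two-divisible, writes $r(x)=x/2+w/2$ explicitly, and computes $\beta(r(x))=\beta(x/2)\oplus\beta(w/2)=\beta(x)/2+w_N/2=s(\beta(x))$ directly (using that $\beta$ extends to an $\ell$-group homomorphism and that $x/2\le w/2$). Your approach via the identity $\beta(r(x))\odot\beta(r(x))=\beta(x)$ is a legitimate alternative once the missing inequality $\beta(r(x))\ge w_N/2$ is supplied.
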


\begin{proof}
Assume that $N=\Gamma(H,v)$ is an MV-algebra with a square root $s$ and $j:M\to N$ is an embedding.
We have $w=i(u)=i(u/2)\oplus i(u/2)=i(u/2)+i(u/2)$, that is $w/2=i(u/2)$. If $y\in \mathbf D(M)$ is such that $y\odot y\leq 0$, then $(y-w+y)\vee 0=0$, which entails that $y-w+y\leq 0$, equivalently, $y\leq w/2$.
Now, from $w/2\odot w/2=0$ it follows that $w/2=r(0)$, and so $r$ is strict. By Theorem 4.4, $\mathbf D(M)$ is a two-divisible $\ell$-group and so
$r(x)=(x+w)/2=x/2+w/2$ for all $x\in \mathbf D(M)$; see \cite[Thm 5.2]{DvZa3}.

On the other hand, $s(y)=(y+v)/2=y/2+v/2$ for all $y\in N$, by \cite[Thm 5.2]{DvZa3}.
Due to Definition \ref{n8}, there exists an injective homomorphism $\beta:\mathbf D(M)\to N$ such that $\beta\circ i=j$. That is, (C1) holds.
Also, we have
$\beta(r(x))=\beta(x/2+w/2)=\beta(x/2\oplus w/2)=\beta(x/2)\oplus \beta(w/2)=\beta(x)/2\oplus \beta(w/2)=\beta(x)/2+\beta(w)/2
=\beta(x)/2+v/2= s(\beta(x))$. Thus, (C2) holds. Therefore, $\mathbf D(M)$ is a strict square root closure for $M$.
\end{proof}

Let $(M;\oplus,',0,1)$ be an MV-algebra, $I_1=\bigcap X(M)_1$, $I_2=\bigcap X(M)_2$ and $f:M\to M/I_1\times M/I_2$ be defined by $f(x)=(x/I_1,x/I_2)$, $x\in M$,
satisfying the condition
\begin{eqnarray}
\label{nn12} \exists a\in M \mbox{ such that }  f(a)=(1/I_1,0/I_2).
\end{eqnarray}
Then $a\in \B(M)$. Since $(a/I_1,a/I_2)=f(a)=(1/I_1,0/I_2)$, we have $a\in I_2$ and so
$[0,a]\s I_2$. Moreover, if $x\in I_2$, then $f(x)=(x/I_1,x/I_2)=(x/I_1,0/I_2)\leq f(a)$, so $x\leq a$. Hence $I_2=[0,a]$. A similar calculus shows that
$I_1=[0,a']$. Therefore, the element $a$ satisfying condition \eqref{nn12} is unique. Due to Remark 2.2, $[0,a]$ and $[0,a']$ are MV-algebras such that $M/I_1=M/[0,a']\cong [0,a]=I_2$ and $M/I_2=M/[0,a]\cong [0,a']=I_1$. It follows that $I_2$ is a Boolean algebra and $I_1$ is a Boolean subdirectly irreducible MV-algebra (see Corollary \ref{n4}).
Considering these notations and conditions, we present the following theorem.

\begin{thm}\label{n10}
Let an MV-algebra $(M;\oplus,',0,1)$ satisfy one of the following conditions
\begin{itemize}
\item[{\rm (i)}] $I_1=\{0\}$.
\item[{\rm (ii)}] $I_2=\{0\}$.
\item[{\rm (iii)}] $I_1\ne \{0\}\ne I_2$ and let \eqref{nn12} hold.
\end{itemize}
Then $M$ has a square root closure. Moreover, in the first case, $M$ is a Boolean algebra and $\mathbf D(M)\cong M$. In the second case, $\mathbf D(M)\cong \mathbf C(M)$, and in the third case, $\mathbf D(M)\cong [0,a]\times \mathbf C([0,a'])$ for a unique Boolean element $a\ne 0,1$ satisfying \eqref{nn12}.
\end{thm}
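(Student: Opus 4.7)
The plan is to dispatch the three cases separately, relying on the structural facts established immediately before the theorem: $a$ is a Boolean element of $M$ uniquely determined by \eqref{nn12}, $I_2 = [0,a]$, $I_1 = [0,a']$, and the isomorphism $M \cong [0,a] \times [0,a']$ identifies $[0,a]$ as a Boolean factor and $[0,a']$ as a Boolean subdirectly irreducible factor (Corollary~\ref{n4}).

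In case (i), $I_1 = \{0\}$ together with \eqref{nn12} forces $a = 1$ and hence $I_2 = M$; the subdirect embedding of $M$ into $\prod_{P \in X(M)_1} M/P$ then presents $M$ as a subalgebra of a product of Boolean algebras, so $M$ is itself Boolean. Remark~\ref{n9}(ii) applies, yielding $\mathbf{D}(M) \cong M$ with the identity as both the square root and the embedding.

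In case (ii), $I_2 = \{0\}$ together with \eqref{nn12} forces $a = 0$ and $I_1 = M$, so $M$ is Boolean subdirectly irreducible (Proposition~\ref{n2}) and has no proper Boolean ideal, since every Boolean ideal of $M$ contains $I_1 = M$. I claim that the strict square root closure $(\mathbf{C}(M), r, i_C)$ from Theorem~\ref{div7} serves as the square root closure of $M$. Given an injective homomorphism $j : M \to N$ into an MV-algebra with square root $s$, the decomposition $N \cong N_1 \times N_2$ from \cite[Thm 4.3]{DvZa3} (with $N_1 = [0, s(0)^- \odot s(0)^-]$ Boolean and $N_2$ strict) shows that the kernel of $\pi_1 \circ j : M \to N_1$ is a Boolean ideal of $M$, hence equals $M$; thus $\pi_1 \circ j \equiv 0$ and $N_1$ is trivial, making $N$ itself an MV-algebra with strict square root. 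The universal property of $\mathbf{C}(M)$ then yields the unique injective homomorphism $\beta : \mathbf{C}(M) \to N$ with $\beta \circ i_C = j$ and $\beta \circ r = s \circ \beta$, giving both (D1) and (D2).

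In case (iii), both factors of $M \cong [0,a] \times [0,a']$ are non-trivial, and cases (i) and (ii) applied to them give $\mathbf{D}([0,a]) \cong [0,a]$ and $\mathbf{D}([0,a']) \cong \mathbf{C}([0,a'])$. I then claim $D := [0,a] \times \mathbf{C}([0,a'])$, with coordinatewise square root $r := \id_{[0,a]} \times r_{\mathbf{C}([0,a'])}$ and embedding $i := \id_{[0,a]} \times i_C$, is the square root closure of $M$. For $j : M \to N$ with square root $s$, the Boolean element $b := j(a) \in N$ yields $N \cong [0,b] \times [0,b']$, and since $[0,b]$ is Boolean the square root $s$ must split as $\id_{[0,b]} \times s_2$ where $s_2$ is a square root on $[0,b']$. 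The restrictions $j|_{[0,a]} : [0,a] \to [0,b]$ and $j|_{[0,a']} : [0,a'] \to [0,b']$ are injective homomorphisms to which cases (i) and (ii) apply respectively, producing $\beta_1 = j|_{[0,a]}$ and a unique $\beta_2 : \mathbf{C}([0,a']) \to [0,b']$; their product gives the desired unique injective $\beta : D \to N$. The main technical obstacle lies here: one must justify that the Boolean element $b = j(a)$ produces a decomposition of $N$ compatible with $s$, so that $[0,b']$ inherits a bona fide square root to which case (ii) can be applied, and in particular that the image of $[0,a']$ lies in the strict part of $[0,b']$; once this is secured, the verifications of injectivity, (D1), (D2), and uniqueness follow coordinatewise from the component universal properties.
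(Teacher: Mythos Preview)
Your case (i) is fine (the reference to \eqref{nn12} is unnecessary there, but harmless).

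Your case (ii), however, contains a genuine error. You assert that $I_2=\{0\}$ forces $I_1=M$, and hence that every Boolean ideal of $M$ equals $M$, so the kernel of $\pi_1\circ j$ is all of $M$ and $N_1$ is trivial. This is false: condition \eqref{nn12} is \emph{not} assumed in case (ii), and without it $I_1$ need not be $M$. The paper's own Remark~\ref{rm:example}(v) gives the counterexample $M=\Gamma(\mathbb Z\lex\mathbb Z,(1,0))$, where $I_2=\{0\}$ but $I_1=\{0\}\times\mathbb Z^+\neq M$; here $M$ has a proper Boolean ideal. What Boolean subdirect irreducibility (Proposition~\ref{n2} and Theorem~\ref{n3}) actually gives you is only that $\pi_2\circ j:M\to N_2$ is injective, not that $\pi_1\circ j$ vanishes. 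Consequently $N$ need not be strict, and you cannot invoke the universal property of $\mathbf C(M)$ directly to land in $N$.

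The paper handles case (ii) by a substantially more delicate construction: from the injective $\pi_2\circ j:M\to N_2$ one gets $\alpha:\mathbf C(M)\to N_2$ via the strict closure, but to produce the required $\beta:\mathbf C(M)\to N\cong N_1\times N_2$ one must manufacture a compatible first coordinate. The paper does this by passing through $\mathbf C(N_1)\times N_2$, using the concrete description of elements of $\mathbf C(M)$ as sums $\sum x_i/2^n$ from Corollary~\ref{co:RDP}, and defining the $N_1$-component via $\bigvee_i(g(x_i)\wedge v)$. Your argument skips this entirely.

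In case (iii), you correctly identify the obstacle (that the decomposition $N\cong[0,b]\times[0,b']$ along $b=j(a)$ need not be compatible with $s$), but leave it unresolved. The paper avoids this by comparing $b=g(a)$ with the canonical Boolean element $v=s(0)'\odot s(0)'$ of $N$: it shows $v\le g(a)$ so that $g(a')\le v'$, and then uses Theorem~\ref{n5} and Theorem~\ref{n6} to equip $[0,g(a')]\subseteq N_2$ with a strict square root, to which the universal property of $\mathbf C([0,a'])$ applies. Since your case (iii) relies on your case (ii), the error propagates.
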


\begin{proof}
(i) If $I_1=\{0\}$, then according to the definition of $X(M)_1$, $g:M\to \prod_{P\in X(M)_1}M/P$ is an embedding into
a Boolean algebra, so $M$ is a Boolean algebra. Hence by Remark \ref{n9}(ii), $(M,\id_M,\id_M)$ is the square root closure of $M$.

(ii) If $I_2=\{0\}$, by Corollary \ref{n4}, $M$ is Boolean subdirectly irreducible.
Let $(\textbf{C}(M),i,r)$ be the strict square root closure of  $M$. We also claim that it is
the square root closure of $M$. Assume that $N$ is an MV-algebra with a square root $s$ and
$g:M\to N$ is an injective homomorphism of MV-algebras. By \cite[Thm 2.21]{Hol},
$N\cong [0,v]\times [0,v']$, where $N_1=[0,v]$ is a Boolean algebra and $N_2=[0,v']$ is an MV-algebra with strict square root, and $v=s(0)'\odot s(0)'$
(the isomorphism is $x\longmapsto (x\wedge v,x\wedge v')$).
Theorem \ref{n3} together with Proposition \ref{n2} imply that $\pi_2\circ g:M\to [0,v']$ is one-to-one.
Clearly, $\pi_2\circ g(x)=g(x)\wedge v'$ for all $x\in M$.
Since $[0,v']$ is strict, according to the definition of
$\textbf{C}(M)$, there exists a unique one-to-one homomorphism $\alpha: \textbf{C}(M)\to [0,v']$ such that $\alpha\circ i=\pi_2\circ g$.

Let $f:M\to N_1\times N_2$ be the injective homomorphism defined by $f(x)=(g(x)\wedge v,g(x)\wedge v')$ for all $x\in M$.
Assume that $\textbf{C}(N_1)$ is the strict square root closure of $N_1$ and consider $N_1$ as a subalgebra of $\textbf{C}(N_1)$.
Clearly, $\textbf{C}(N_1)\times N_2$ is a strict MV-algebra. It follows that there exists an embedding $\beta:\textbf{C}(M)\to \textbf{C}(N_1)\times N_2$ such that
$\beta(x)=f(x)=(g(x)\wedge v,g(x)\wedge v')$ for all $x\in M$.
\setlength{\unitlength}{1mm}
\begin{figure}[!ht]
\begin{center}
\begin{picture}(50,20)
\put(13,17){\vector(2,0){12}}
\put(8,14){\vector(0,-1){12}}
\put(24,14){\vector(-1,-1){12}}
\put(5,-2){\makebox(4,2){{ $\textbf{C}(N_1)\times N_2$}}}
\put(5,16){\makebox(4,2){{ $M$}}}
\put(28,16){\makebox(6,2){{ $\textbf{C}(M)$}}}
\put(15,19){\makebox(4,2){{ $i$}}}
\put(1,7){\makebox(4,2){{$f$}}}
\put(20,7){\makebox(4,2){{$\beta$}}}
\end{picture}
\caption{\label{fig100n} Homomorphisms between $M$, $\textbf{C}(M)$, and $\textbf{C}(N_1)\times N_2$}
\end{center}
\end{figure}
Choose $x\in \textbf{C}(M)$. By Theorem \ref{th:crit}(iii), $x=\sum_{i=1}^{2^n}x_i/2^n$ for some $n\in\mathbb N$ and $x_1,\ldots,x_{2^n}\in M$.
We have
\begin{eqnarray*}
\label{k1} \beta(x)&=&\beta(\sum_{i=1}^{2^n}x_i/2^n)= (\pi_1\circ\beta(\sum_{i=1}^{2^n}x_i/2^n), \pi_2\circ\beta(\sum_{i=1}^{2^n}x_i/2^n))\\
\label{k2} &=& (\sum_{i=1}^{2^n}\pi_1\circ\beta(x_i/2^n), \sum_{i=1}^{2^n}\pi_2\circ\beta(x_i/2^n))=
(\sum_{i=1}^{2^n}(\pi_1\circ\beta(x_i))/2^n, \sum_{i=1}^{2^n}(\pi_2\circ\beta(x_i))/2^n)\\
\label{k3} &=& (\sum_{i=1}^{2^n}(g(x_i)\wedge v)/2^n,\sum_{i=1}^{2^n}(g(x_i)\wedge v')/2^n)= (\sum_{i=1}^{2^n}(g(x_i)\wedge v)/2^n,\alpha(x)).
\end{eqnarray*}
For simplicity, we denote $\pi_i\circ\beta$ by $\beta_i$ for $i=1,2$.
Define $\varphi:\textbf{C}(M)\to N_1\times N_2$ by $\varphi(x)=(\bigvee_{i=1}^{2^n}(g(x_i)\wedge v),\beta_2(x))$. We will show that $\varphi$ is an
injective homomorphism.
It suffices to show that $\pi_1\circ\varphi:\textbf{C}(M)\to [0,v]$ is a homomorphism. By definition of $\beta_1$, we have $2^n\beta_1(x)=2^n(\sum_{i=1}^{2^n}(g(x_i)\wedge v)/2^n)=\sum_{i=1}^{2^n}(g(x_i)\wedge v)$, whence
$(2^n\beta_1(x))\wedge v=\bigoplus_{i=1}^{2^n}(g(x_i)\wedge v)=\bigvee_{i=1}^{2^n}(g(x_i)\wedge v)=\pi_1\circ\varphi(x)$; note that
$g(x_i)\wedge v\in N_1\s \B(N)$ for all integers $1\leq i\leq 2^n$. Hence, for all $x\in \textbf{C}(M)$ we have
\begin{eqnarray}
\label{k5} \pi_1\circ\varphi(x)=(2^n\beta_1(x))\wedge v=2^n.\beta_1(x).
\end{eqnarray}
Thus, $\pi_1\circ\varphi$ is well-defined. Moreover, $\varphi$ is one-to-one, since $\alpha$ is one-to-one.
In addition, $\varphi(0)=(0,0)$ and $\varphi(1)=(g(1)\wedge v,g(1)\wedge v')=(v,v')$.

Now, we show that it preserves $\oplus$ and $'$.

(a) From $\pi_1\circ\varphi(x')=\pi_1\circ\varphi(u-x)= \pi_1\circ\varphi(u-\sum_{i=1}^{2^n}x_i/2^{n})= \pi_1\circ\varphi(\sum_{i=1}^{2^n}(u-x_i)/2^{n})= \sum_{i=1}^{2^n}(g(u-x_i)/2^n)=
v-\sum_{i=1}^{2^n}g(x_i)/2^n=v-\pi_1\circ\varphi(x)= (\pi_1\circ\varphi(x))'$ it follows that $\pi_1\circ\varphi$ preserves the unary operation $'$.

Let $x=\sum_{i=1}^{2^n}x_i/2^n$ and $y=\sum_{j=1}^{2^m}y_j/2^n$ for some $n,m\in\mathbb N$, where $x_1,\ldots,x_{2^n},y_1,\ldots,y_{2^m}\in M$; by Corollary \ref{co:RDP}, such representations exist. Without loss of generality, we can assume that $n=m$. Otherwise, if e.g. $n<m$, then $x_i/2^n = 2^{m-n}x_i/2^m$, and every element $x_i$ will be now $2^{m-n}$-times. By (\ref{k5}), we get
\begin{eqnarray*}
\pi_1\circ\varphi(x\oplus y)&=&2^n.\beta_1(x\oplus y)=2^n.\beta(x)\oplus 2^n.\beta_1(y)=\pi_1\circ\varphi(x)\oplus \pi_1\circ\varphi(y).
\end{eqnarray*}
Therefore, $\varphi:\textbf{C}(M)\to N_1\times N_2$ is an injective homomorphism. By definition of $\varphi$, if $x\in M$, then
$\varphi(x)=(g(x)\wedge v,g(x)\wedge v')=f(x)$. Hence, $\textbf{C}(M)$ is a square root closure for $M$.

(iii) Let $I_1\neq\{0\}$ and $I_2\neq\{0\}$ and let \eqref{nn12} hold. By the note right before this theorem, $I_1=[0,a']$, $I_2=[0,a]$, and $a\in \B(M)\setminus\{0,1\}$.
We claim that $[0,a]\times \mathbf C([0,a'])$ is the square root closure of $M$, where $(\mathbf C([0,a']),i,r)$ is the strict square root of the MV-algebra $[0,a']$.
Let $r_1:[0,a]\to [0,a]$ be the identity map and $r_2$ be the strict square root on $(\mathbf C([0,a']),i,r)$. Then $(r_1,r_2)$ is a square root on  $[0,a]\times \mathbf C([0,a'])$.
Also, the map $f:M\to [0,a]\times \mathbf C([0,a'])$ defined by $f(x)=(x\wedge a,x\wedge a')$ is an injective homomorphism.
Assume that $N$ is an MV-algebra with a square root $s$,
$g:M\to N$ is an injective homomorphism of MV-algebras, and $v$ is the element introduced in (ii).
By Remark \ref{n7}(\ref{903}), $I_2=\bigcap\{g^{-1}(Q)\mid Q\in Y_2\}$, so by Proposition \ref{10.5}(iii),
$[0,v]=\bigcap X_2^N\s \bigcap Y_2=\downarrow g(I_2)=\downarrow g([0,a])$. That is, $v\leq g(a)$ consequently, $g(a')\leq v'$.
We have
\begin{eqnarray*}
g(a)&=&(g(a)\wedge v)\vee (g(a)\wedge v')=v\vee (g(a)\wedge v'), \\
v'&=& (g(a')\wedge v')\vee (g(a)\wedge v')=g(a')\vee (g(a)\wedge v').
\end{eqnarray*}
(1) By (i), for the Boolean algebra $[0,a]$, we know that $([0,a],f_1,r_1)$ is its square root closure as it is shown in
the commutative diagram in Figure \ref{fig1n}, where $f_1=r_1$ is the identity map on $[0,a]$ and $\alpha_1=g$.
\setlength{\unitlength}{1mm}
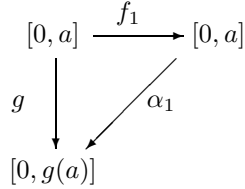
\begin{figure}[!ht]
\begin{center}
\begin{picture}(50,20)
\put(13,17){\vector(2,0){12}}
\put(8,14){\vector(0,-1){12}}
\put(24,14){\vector(-1,-1){12}}
\put(5,-2){\makebox(4,2){{ $[0,g(a)]$}}}
\put(5,16){\makebox(4,2){{ $[0,a]$}}}
\put(26,16){\makebox(6,2){{ $[0,a]$}}}
\put(15,19){\makebox(4,2){{ $f_1$}}}
\put(1,7){\makebox(4,2){{$g$}}}
\put(20,7){\makebox(4,2){{$\alpha_1$}}}
\end{picture}
\caption{\label{fig1n} The diagram of square root closure of $[0,a]$}
\end{center}
\end{figure}

(2) Furthermore, the MV-algebra $[0,a']$ is Boolean subdirectly irreducible.
Also, $[0,v']\cong [0,g(a')]\times [0,g(a)\wedge v']$ and $\varphi:[0,v']\to [0,g(a')]$ is an onto homomorphism, so by \cite[Prop 3.9]{DvZa3},
$t:[0,g(a')]\to [0,g(a')]$ defined by $t(\varphi(x))=\varphi(s(x))=s(x)\wedge g(a')$ for all $x\in [0,v']$ is a square root on $[0,g(a')]$.

On the other hand, by Theorem \ref{n6}, $[0,v']$ is Boolean subdirectly irreducible, so
$[0,g(a')]$ is Boolean subdirectly irreducible, too, because $[0,v']=[0,g(a')]\times [0,g(a)\wedge v']$. By Theorem \ref{n6}, $t$ is a strict square root on $[0,g(a')]$. According to the strict square root closure definition, we have the commutative diagram in Figure \ref{fig2n}.

\begin{figure}[!ht]
\begin{center}
\begin{picture}(50,20)
\put(13,17){\vector(2,0){12}}
\put(8,14){\vector(0,-1){12}}
\put(24,14){\vector(-1,-1){12}}
\put(5,-2){\makebox(4,2){{ $[0,g(a')]$}}}
\put(5,16){\makebox(4,2){{ $[0,a']$}}}
\put(26,16){\makebox(13,2){{ $\mathbf C([0,a'])$}}}
\put(15,19){\makebox(4,2){{ $f_2$}}}
\put(1,7){\makebox(4,2){{$g$}}}
\put(20,7){\makebox(4,2){{$\alpha_2$}}}
\end{picture}
\caption{\label{fig2n} The diagram of strict square root closures of $[0,a']$}
\end{center}
\end{figure}
From (1) and (2), it follows that $\alpha:[0,a]\times \mathbf C([0,g(a')])\to N$ defined by $\alpha(x,y)=\alpha_1(x)\vee \alpha_2(y)$
is a homomorphism of MV-algebras such that $\alpha\circ f=g$. Indeed, let $x\in M$. Then $\alpha(f(x))=\alpha(x\wedge a,x\wedge a')=\alpha_1(x\wedge a)\vee
\alpha_2(x\wedge a')=g(x\wedge a)\vee g(x\wedge a')=g(x)$. That is, (D1) holds.
Moreover, for each $(x,y)\in  [0,a]\times \mathbf C([0,g(a')])$ we have $\alpha((x,y))=\alpha((r_1,r_2)(x,y)\odot (r_1,r_2)(x,y))=\alpha((r_1,r_2)(x,y))\odot \alpha((r_1,r_2)(x,y))$
implies that $\alpha((r_1,r_2)(x,y))\leq s(\alpha((r_1,r_2)(x,y)))$, that is (D2) holds.
Therefore, $([0,a]\times \mathbf C([0,g(a')]),(r_1,r_2),f)$ is the square root closure of $M$.
\end{proof}

\begin{rmk}\label{rm:example}
(i) We note that due to Example \ref{example}(ii), every at least three-element linearly ordered MV-algebra is Boolean subdirectly irreducible. Due to Proposition \ref{n2}, $I_2=\bigcap X(M)_2=\{0\}$. Theorem \ref{n10}(ii) yields that it has a square root closure, and $\mathbf D(M)=\mathbf C(M)$.

(ii) Every direct product of at least three-element linearly ordered MV-algebras is Boolean subdirectly, i.e., $I_2=\{0\}$ and it has a square root closure, and $\mathbf D(M)=\mathbf C(M)$.

(iii) Let $M_n=\{1/n,2/n,\ldots,n/n\}$, $n\ge 2$. Then $X(M_n)_1=\emptyset$, $X(M_n)_2=\{\{0\}\}$, so $I_1=M$, $I_2=\{0\}$ and \eqref{nn12} holds with the element $a=0$. The same is true for $M=\Gamma(\mathbb R,1)$, and $\mathbf D(M)=\mathbf C(M)$

(iv) Let $M=\{0,1\}$. Then $X(M)_1=\{0\}$ and $X(M)_2=M$, and the element $a= 1$ satisfies \eqref{nn12}. Moreover, $\mathbf D(M)=\mathbf C(M)$.

(v) Let $M=\Gamma(\mathbb Z\lex \mathbb Z,(1,0))$. Then $X(M)_1=\{\{(0,0)\}\times \mathbb Z^+\}$, $X(M)_2=\{(0,0)\}$, $I_1=\{0\}\times \mathbb Z^+$, $I_2=\{(0,0)\}$, \eqref{nn12} fails. In addition, $\mathbf D(M)=\mathbf C(M)$.

(vi) Let $A=[0,1]$ be the standard MV-algebra of the real unit interval and $B$ an arbitrary Boolean algebra. For the MV-algebra $M:=B\times A$, we have
\begin{eqnarray*}
X(M)=\{P\times A\colon \mbox{ $P$ is a prime ideal of $B$}\}\cup \{B\times \{0\}\}.
\end{eqnarray*}
An ideal $I$ of $M$ belongs to $X(M)_1$ \iff $I=P\times A$ for some $P\in X(B)$. Therefore, $I_1=\bigcap X(M)_1=\{0\}\times A$ and $X(M)_2=\{B\times \{0\}\}$, so that $I_2=B\times \{0\}$. The element $a=(1,0)$ satisfies \eqref{nn12}.
\end{rmk}

Now, using (i) of the latter remark and the criterion Theorem \ref{th:crit}, we present some examples of the square root closures and their relation with the strict square root closures for $M_n=\{0,1/n,2/n,\ldots,n/n\}$ $(n\ge 1)$, $\Gamma(\mathbb Z\lex\mathbb Z,(1,0))$, and $M(\alpha)$, $\alpha$ rational in $[0,1]$. We define for each prime number $p\ge 2$,
\begin{align*}
\mathfrak{}\mathbb D(p)&=\big\{\frac{j}{p^m}\mid j\in \mathbb Z,\, m\ge 0\big\},\\
\mathbb D_2(p)&=\big\{\frac{1}{p}\frac{i}{2^k}\mid i\in \mathbb Z,\, k\ge 0\big\}.
\end{align*}
Then $(\mathbb D(p),1)$ is a unital $p$-divisible subgroup of $(\mathbb R,1)$ that is not two-divisible for any prime $p>2$, and
$(\mathbb D_2(p),1)$ is a unital two-divisible subgroup of $\mathbb R$ containing a copy of $\frac{1}{p}\mathbb Z$. In particular, $\mathbb D:=\mathbb D_2(1)$ is the group of dyadic numbers with strong unit $1$, and $\mathbb D(p)=\frac{1}{p}\mathbb D$.

\begin{exm}\label{ex:D(M)}
{\rm Let $M_n=\Gamma(\frac{1}{n}\mathbb Z,1)$ for $n\ge 1$.

(1) If $n=1$, $\mathbf D(M_1) =M_1$, $\mathbf C(M_1)=\Gamma(\mathbb D,1)$.

(2) If $n=2$, $M_2\varsubsetneq \mathbf C(M_2)=\Gamma(\mathbb D,1) = \mathbf D(M_2)$.

(3) If $n=p$ is a prime number, then $\mathbf D_p$ is not two-divisible, see \cite[Ex 4.15(2)]{DvZa2}, and $\mathbf D(M_p)=\Gamma(\mathbf D_2(p),1)=\mathbf C(M_p)$.

(4) Let $n=p_1^{j_1}\cdots p_k^{j_k}$ be a compound number, where each $p_j>1$ and $p_j$ is prime. Define
$$
\mathbf D_2(p_1,\ldots,p_k)=\frac{1}{p_1}\mathbb D+\cdots + \frac{1}{p_k}\mathbb D= \big\{\frac{1}{p^{j_1}_1}\frac{i_1}{2^{m_1}}+\cdots+ \frac{1}{p^{j_k}_k}\frac{i_k}{2^{m_k}}\mid i_1,\ldots,i_k\in \mathbb Z,\, m_1,\ldots,m_k\ge 0\big\}.
$$
Then $(\mathbf D_2(p_1,\ldots,p_k),1)$ is a unital subgroup of $(\mathbb R,1)$ that is two-divisible and containing a copy of $\frac{1}{p_i}\mathbb Z$ for each $i=1,\ldots,k$. So $M_n\varsubsetneq \mathbf D(M_n)=\Gamma(\mathbf D_2(p_1,\ldots,p_k),1)= \mathbf C(M_n)$.

(5) If $M=\Gamma(\mathbb Z\lex \mathbb Z,(1,0))$, then $M\varsubsetneq \mathbf D(M)=\Gamma(\mathbb D \lex \mathbb D,(1,0))=\mathbf C(M)$.

(6) For each irrational $\alpha$, $0<\alpha <1/2$, let $M(\alpha)=
\{m+n\alpha \mid m,n \in \mathbb Z, 0\le m+n\alpha\le 1\}$, $G(\alpha)=\{m + n\alpha \mid m,n\in \mathbb Z\}$, and $\mathbb D(\alpha)=\{d+m\alpha/2^n \mid d\in \mathbb D, n\ge 0, m\in \mathbb Z\}$. Then $M(\alpha)$ is an MV-subalgebra of $[0,1]$ generated by $\alpha$, and $M(\alpha) = \Gamma(G(\alpha),1) \varsubsetneq \mathbf D(M(\alpha))=\mathbf C(M(\alpha))= \Gamma(\mathbb D(\alpha), 1)$.

If $\beta$ is an irrational in $[0,1/2]$, $M(\alpha)\cong M(\beta)$ iff $M(\alpha)=M(\beta)$ iff $\alpha=\beta$; see \cite[Cor 7.2.6]{CDM}. Whence, we have uncountably many non-isomorphic MV-algebras $M(\alpha)$'s without square root; see \cite[Ex 4.15(1)]{DvZa2}.

(7) Let $M$ be the Boolean algebra $M= \{0,1\}^n$, $n\ge 1$, then $M\cong \Gamma(\mathbb Z^n,(1,\ldots,1))$ and $M=  \mathbf D(M)\varsubsetneq \Gamma(\mathbb D^n,(1,\ldots,1))=\mathbf C(M)$.

(8) Let $M=B$ be a Boolean algebra. Due to the Stone theorem, there is an algebra $\mathcal S(B)$ of subsets of some set $\Omega\ne \emptyset$ such that $B\cong \mathcal S(B)$. Define $\mathcal F(B)=\{\chi_A\mid A \in \mathcal S(B)\}$ and let $G(B)=\{\sum_{i=1}^jn_i\chi_{A_i}\mid n_i \in \mathbb Z, \chi_{A_i}\in \mathcal F(B), i=1,\ldots, j,\, j\ge 1\}$. Then $(G(B),1)$ is a unital $\ell$-group of integer-valued functions such that $B\cong \mathcal F(B)=\Gamma(G(B),1)$. If $\mathbb D(B)=\{f/2^n\mid f \in G(B), n\ge 0\}$, then $(\mathbb D(B),1)$ is a two-divisible unital $\ell$-group such that $B \cong \mathcal F(B)=\mathbf D(B)\varsubsetneq \mathbf C(B)=\Gamma(\mathbb D(B),1)$.

(9) Let $M_i=\Gamma(G_i,u_i)$ be an MV-algebra and let $\mathbf C(M_i)=\Gamma(D_i,u_i)$, where $(D_i,u_i)$ is a two-divisible unital $\ell$-group, $G_i\subseteq D_i$ for each $i=1,\ldots,n$. Then $M:=\prod_{i=1}^n M_i=\Gamma(G,u)$, where $u=(u_1,\ldots,u_n)\in G:=\prod_{i=1}^n G_i$. If $D:=D_1\times\cdots \times D_n$, then $D$ is a two-divisible $\ell$-group. Let $(h_1,\ldots,h_n)\in D$. Due to the criterion Theorem \ref{th:crit}, for each $h_i\in D_i$, there is an integer $m_i$ such that $2^{m_i}h_i\in G_i$. Let $m=m_1+\cdots+m_n$. Then $2^m(h_1,\ldots,h_n)=(2^mh_1,\ldots,2^mh_n)\in G$, which by the mentioned criterion gives $\mathbf C(\prod_{i=1}^n M_i)\cong \Gamma(D,u)= \prod_{i=1}^n \Gamma(D_i,u_i)= \prod_{i=1}^n \mathbf C(M_i)$.
}
\end{exm}

At the end of the section, we present the following problem.

\begin{open}
Let $(M;\oplus,',0,1)$ be an MV-algebra. Does it have the square root closure? If yes, is $M/I_1\times \mathbf C(M/I_2)$ the square closure of $M$?
\end{open}

\section{Greatest Subalgebras with Square Root}%7

In the last section, we embed special pseudo MV-algebras in pseudo MV-algebras with square roots. Now, we define a square root of an individual element of a pseudo MV-algebra and find the greatest subalgebra of a special pseudo MV-algebra with weak square root.

\begin{defn}\label{9.1}
An element $a$ of a pseudo MV-algebra $(M;\oplus,^-,^\sim,0,1)$ is called a {\it square root} of an element $x\in M$ if $a$ satisfies conditions (Sq1) and (Sq2). That is, $a\odot a=x$ and if $y\odot y\leq x$, then $y\leq a$ for all $y\in M$.
\end{defn}

Clearly, (Sq2) implies that if $x\in M$ has a square root $a\in M$, then $a$ is unique, so we denote the element $a$ by $\sqrt{x}$.
Moreover, in each pseudo MV-algebra, the top element $1$ has a square root and $\sqrt{1}=1$. Let $x\leq y$ be elements of $M$ such that $\sqrt{x}$ and $\sqrt{y}$ exist. Then $\sqrt{x}\odot\sqrt{x}\leq y$ and (Sq2) implies that $\sqrt{x}\leq \sqrt{y}$. Therefore,
\begin{eqnarray}
\label{eq9.1} x\leq y \mbox{ implies that } \sqrt{x}\leq \sqrt{y}.
\end{eqnarray}

If $\sqrt{x}$ exists for each $x\in M$, then the mapping $r:M\to M$ defined by $r(x)=\sqrt{x}$, $x\in M$, is a weak square root mapping, introduced in \cite[Def 3.2]{DvZa3}. If $M$ is an MV-algebra, then $r$ is a square root mapping introduced in \cite{Hol}; see also \cite{DvZa3}. Moreover, not every weak square root mapping on a pseudo MV-algebra is automatically a square root mapping (see \cite[Ex 6.1-6.2]{DvZa3}); on MV-algebras, they are equivalent. In those examples, $\sqrt{x}$ exists for each $x\in M$, but the mapping $x\mapsto \sqrt{x}$, $x\in M$, is not a square root; it is only a weak square root.

\begin{exm}\label{9.2}
(i) Consider the subalgebra $M_3=\{0,1/3,2/3,1\}$ of the MV-algebra of the real unit interval $[0,1]$.
Then $\sqrt{0}=1/3$, $\sqrt{1/3}=2/3$, $\sqrt{1}=1$, but $2/3$
does not have any square root. In the MV-algebra $M_{2n-1}=\Gamma(\frac{1}{2n-1}\mathbb Z,1)$, the square root of $1/(2n-1)$ exists and is equal to $n/(2n-1)$, whereas, the square root of $2(n-1)/(2n-1)$ fails. Similarly, in $M_{2n}=\Gamma(\frac{1}{2n}\mathbb Z,1)$, $\sqrt{0}=n/(2n)$ and $1/(2n)$ does not have a square root.

(ii) Consider the $\ell$-group $G:=\mathbb Z\lex\mathbb Z\lex\mathbb Z$ defined as follows: For $(a,b,c),(x,y,z)\in G$, let
\begin{eqnarray*}
(a,b,c)+(x,y,z)=(a+x,b+y,c+z+ay),\quad -(a,b,c,d)=(-a,-b,-c+ab).
\end{eqnarray*}
By \cite[P 138, E41]{Anderson}, $(G;+,(0,0,0))$ is a non-Abelian linearly ordered group. The element $u=(1,0,0)$ is a strong unit of $G$, and $M=\Gamma(G,u)$ is a pseudo MV-algebra.

The pseudo MV-algebra $\Gamma(G,u)$ has no square root. Let $m,n\in\mathbb N$. Consider the element
$(1,-2n,2m)\in M$. We have
$(1,-n,m)\odot (1,-n,m)=((1,-n,m)+(-1,0,0)+(1,-n,m))\vee (0,0,0)=((0,-n,m)+(1,-n,m))\vee (0,0,0)=(1,-2n,2m)\vee (0,0,0)=(1,-2n,2m)$.
On the other hand, assume that $(x,y,z)\in M$ such that $(x,y,z)\odot (x,y,z)\leq (1,-2n,2m)$. If $x=0$, then clearly
$(x,y,x)\leq (1,-n,m)$. If $x=1$, then $y\leq 0$ and  $(1,2y,2z)=(x,y,z)\odot (x,y,z)\leq (1,-2n,2m)$, so
$2y<-2n$ or $2y=-2n$ and $2z\leq 2m$. In both cases, $(x,y,z)\leq (1,-n,m)$. That is, $\sqrt{(1,-2n,2m)}=(1,-n,m)$.
\end{exm}

\begin{prop}\label{9.2.0}
Let $M=\Gamma(G,u)$ be a pseudo MV-algebra.
 \begin{itemize}[nolistsep]
 \item[{\rm (i)}] If $\sqrt{0}$ exists, then $\sqrt{0}\leq \sqrt{0}^-\wedge\sqrt{0}^\sim$ and $\sqrt{0}=\max\{x\wedge x^-\mid x\in M\}$.
 \item[{\rm (ii)}] The element $\sqrt{0}$ exists \iff the set $\{y\in M\mid 2y\leq u\}$ has a top element. In addition, if $M$ is representable and $u/2$ exists, then $\sqrt{0}$ exists and $\sqrt{0}=u/2$.
 \item[{\rm (iii)}] If $\sqrt{x}$ exists, then $\sqrt{x}\leq (x\oplus\sqrt{0})\wedge (\sqrt{0}\oplus x)$.
 \end{itemize}
\end{prop}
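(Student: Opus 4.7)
The plan is to work inside $(G,u)$ with $M=\Gamma(G,u)$, using $y\odot z=(y-u+z)\vee 0$, $y\oplus z=(y+z)\wedge u$, and the chain
\[
y\odot y=0\;\Longleftrightarrow\;y-u+y\leq 0\;\Longleftrightarrow\;y+y\leq u\;\Longleftrightarrow\;y\leq y^-\;\Longleftrightarrow\;y\leq y^\sim,
\]
all of whose implications are single-step additions of $\pm y,\pm u$ on either side. For (i) I set $s=\sqrt{0}$; from $s-u+s\leq 0$, adding $-s$ on the left and then $u$ on the left gives $s\leq s^-$, while adding $-s$ on the right and then $u$ on the right yields $s\leq s^\sim$, so $s\leq s^-\wedge s^\sim$. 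For the maximum characterization, I would use monotonicity of $\odot$ together with $x\wedge x^-\leq x,x^-$ to get $(x\wedge x^-)\odot(x\wedge x^-)\leq x\odot x^-=0$; (Sq2) then forces $x\wedge x^-\leq s$, and equality is attained at $x=s$, since $s\leq s^-$ gives $s\wedge s^-=s$.

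For (ii), the forward direction is immediate from (i): $s\in\{y:2y\leq u\}$ and any $y$ in that set satisfies $y\odot y=0$, so (Sq2) gives $y\leq s$. Conversely, if $s_0=\max\{y\in M:2y\leq u\}$ exists, then $s_0\odot s_0=0$ and every $y$ with $y\odot y\leq 0$ lies in the same set and hence $\leq s_0$, so $s_0=\sqrt{0}$. For the additional claim I would reduce through a subdirect embedding into linearly ordered $\ell$-group factors to the purely group-theoretic fact that in a linearly ordered group $2y\leq 2v$ implies $y\leq v$; that goes by contradiction, using that $y>v$ forces $-y<-v$ and hence $y\leq -y+v+v<-v+v+v=v$, absurd.

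Part (iii) is the technical heart. Put $a=\sqrt{x}$, $s=\sqrt{0}$; from $a\odot a=x$ I get $a-u+a\leq x$. Adding $u-a=a^-$ on the left yields $a\leq a^-+x$, hence $a-x\leq a^-$, and adding $a$ on the right gives the anchor inequality $a-x+a\leq u$. Using $-x\leq 0$, I then translate this anchor on the right and on the left, respectively:
\[
(a-x)+(a-x)\leq (a-x)+a=a-x+a\leq u,\qquad (-x+a)+(-x+a)=-x+(a-x+a)\leq -x+u\leq u.
\]
Thus $(a-x)\vee 0$ and $(-x+a)\vee 0$ both lie in $\{y\in M:2y\leq u\}$ and so are $\leq s$ by (ii). When $a\geq x$ this reads $a-x\leq s$ and $-x+a\leq s$, which rearrange to $a\leq s+x$ and $a\leq x+s$; when $a<x$ both inequalities follow trivially from $s\geq 0$. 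Together with $a\leq u$, this yields $a\leq(x\oplus s)\wedge(s\oplus x)$.

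The main obstacle I anticipate is the non-commutative bookkeeping in (iii): the sums $(a-x)+(a-x)$ and $(-x+a)+(-x+a)$ are genuinely distinct elements of $G$ in the pseudo setting, and they must be matched to $s\oplus x$ and $x\oplus s$ respectively. In the commutative case both collapse to $2(a-x)$ and a single argument suffices, but here one must work from the single anchor $a-x+a\leq u$ and translate it separately on the left and on the right by $-x$; correctly pairing each translate with its target inequality is the chief subtlety.
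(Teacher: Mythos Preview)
Your arguments for (i) and (ii) are correct and match the paper's proof essentially line for line.

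For (iii) your approach is correct but genuinely different from the paper's. The paper never leaves the pseudo MV-algebra: writing $a=\sqrt{x}$, it observes
\[
(x^\sim\odot a)\odot(x^\sim\odot a)\leq x^\sim\odot(a\odot a)=x^\sim\odot x=0,
\]
so $x^\sim\odot a\leq\sqrt{0}$ by (Sq2), and then uses the identity $a=x\vee a=x\oplus(x^\sim\odot a)\leq x\oplus\sqrt{0}$; the dual inequality comes from $(a\odot x^-)\odot(a\odot x^-)\leq 0$. Your route instead works entirely in $G$: you pass from $a-u+a\leq x$ to the anchor $a-x+a\leq u$, then translate by $-x$ on the appropriate side to obtain $2(a-x)\leq u$ and $2(-x+a)\leq u$, and invoke (ii). Both arguments are short; the paper's has the advantage of staying at the algebraic level (so it transfers verbatim to settings without an underlying group representation), while yours makes the role of the ``half'' element $u/2$ more transparent and ties (iii) directly back to the characterization in (ii).

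One presentational point in your (iii): the case split ``when $a\geq x$ \dots\ when $a<x$'' tacitly assumes $a$ and $x$ are comparable, which is not given in a general pseudo MV-algebra. In fact the second case is vacuous: since $x\odot x=(x-u+x)\vee 0\leq x$, (Sq2) forces $x\leq\sqrt{x}=a$, so $a-x\geq 0$ and $-x+a\geq 0$ always. Once you note this, the passage from $2(a-x)\leq u$ to $(a-x)\vee 0\in\{y:2y\leq u\}$ is immediate (the $\vee 0$ is redundant), and your argument goes through cleanly without any case analysis.
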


\begin{proof}
(i) We have $\sqrt{0}\odot \sqrt{0}=0$ and \cite[Prop 1.9]{georgescu} imply that $\sqrt{0}\leq \sqrt{0}^-,\sqrt{0}^\sim$. Then $\sqrt{0}=\sqrt{0}\wedge \sqrt{0}^-\in \{x\wedge x^-\mid x\in M\}$. For each $x\in M$,
$(x\wedge x^-)\odot (x\wedge x^-)\leq x\odot x^-=0$, so $x\wedge x^-\leq \sqrt{0}$. Therefore,
$\sqrt{0}=\max\{x\wedge x^-\mid x\in M\}$.

(ii) Since $\sqrt{0}-u+\sqrt{0}\leq(\sqrt{0}-u+\sqrt{0})\vee 0=\sqrt{0}\odot\sqrt{0}=0$, we have $\sqrt{0}-u\leq -\sqrt{0}$ and so
$2\sqrt{0}\leq u$, that is $\sqrt{0}\in \{y\in M\mid 2y\leq u\}$. Now, let $y\in M$ be such that $2y\leq u$, then $2y-u\leq 0$ and so $y-u\leq -y$ which implies that $y-u+y\leq 0$. Hence $y\odot y=(y-u+y)\vee 0=0$, consequently, $y\leq\sqrt{0}$.
Therefore, $\sqrt{0}=\max\{y\in M\mid 2y\leq u\}$. Now, assume that $M$ is representable and $u/2$ exists.
For each $y\in \{y\in M\mid 2y\leq u\}$, we have $2y\leq u=2(u/2)$, so $y\leq u/2$. Hence $u/2=\max\{y\in M\mid 2y\leq u\}=\sqrt{0}$.

(iii) From $(x^\sim\odot \sqrt{x})\odot (x^\sim\odot \sqrt{x})\leq x^\sim \odot (\sqrt{x}\odot\sqrt{x})\leq x^\sim\odot x=0$ and (Sq2), it follows that
$x^\sim\odot \sqrt{x}\leq \sqrt{0}$, whence $\sqrt{x}=x\vee\sqrt{x}=x\oplus (x^\sim\odot \sqrt{x})\leq x\oplus \sqrt{0}$.
In a similar way, using $(\sqrt{x}\odot x^-)\odot (\sqrt{x}\odot x^-)\leq (\sqrt{x}\odot\sqrt{x})\odot x^-\leq x\odot x^-$,
we can prove that $\sqrt{x}\leq \sqrt{0}\oplus x$.
\end{proof}

\begin{lem}\label{9.3}
Let $M=\Gamma(G,u)$ be a symmetric representable pseudo MV-algebra, $u/2\in M$, $N$ be a linearly ordered pseudo MV-algebra and $f:M\to N$ be a surjective homomorphism. If $\sqrt{x}$ exists for some $x\in M$, then $f(\sqrt{x})$ exists and
\begin{equation*}
f(\sqrt{x})=\left\{\begin{array}{ll}
(f(x)+f(u))/2=\sqrt{f(x)} &  \mbox{ if }  f(x)\neq 0\\
f(u)/2=\sqrt{0}& \ \mbox{if }  f(x)=0.
\end{array} \right.
\end{equation*}
\end{lem}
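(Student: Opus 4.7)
The plan is to exhibit a closed form $\sqrt{x}=(x+u)/2$ for every $x\in M$ whose square root exists, then transport this formula across $f$ and verify directly in the chain $N$ that the resulting element of $N$ satisfies (Sq1) and (Sq2).

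First I would record a centrality step. Since $M$ is symmetric we have $u\in \C(G)$, and because $u/2\in M$, $M$ is representable, and $u\in \C(G)$, the chain argument in the proof of Lemma \ref{div01}(i) applies in every linearly ordered quotient $M_i$ of a subdirect decomposition of $M$ and shows that $u/2\in \C(G)$. In particular, in the group $G$ one has $a\odot a=(2a-u)\vee 0$ for every $a\in M$. Since $f$ is onto, $N$ is again symmetric and $f(u)/2=f(u/2)\in N$, so the same argument gives $f(u)/2\in \C(H)$, where $N=\Gamma(H,f(u))$.

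Next, the key claim: whenever $\sqrt{x}$ exists in $M$, the element $(x+u)/2$ exists in $G$ and equals $\sqrt{x}$. Set $a=\sqrt{x}$ and $b=a\vee (u/2)\in M$. I work through a subdirect embedding $G\hookrightarrow \prod_i G_i$ into linearly ordered groups, with images $a_i,x_i,u_i$. In each chain, the equation $a_i\odot a_i=(2a_i-u_i)\vee 0=x_i$ forces $a_i=(x_i+u_i)/2$ when $x_i>0$ (the unique solution in a chain), and leaves $a_i\le u_i/2$ when $x_i=0$; in either case $b_i=a_i\vee (u_i/2)$ equals $(x_i+u_i)/2$ uniformly. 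Hence $2b=x+u$ componentwise, so already in $G$ we get $2b=x+u$, that is $b=(x+u)/2\in M$. A direct computation $b\odot b=(2b-u)\vee 0=x\vee 0=x$ shows $b$ satisfies (Sq1), and since $b\ge a=\sqrt{x}$, the maximality of $\sqrt{x}$ forces $b=a$, giving $\sqrt{x}=(x+u)/2$.

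Extending $f$ to the induced $\ell$-group homomorphism $G\to H$ and using that $f$ commutes with division by $2$ (because $f(u/2)=f(u)/2$ and $f$ is a group homomorphism), I obtain
\[
f(\sqrt{x})=f((x+u)/2)=(f(x)+f(u))/2\in N.
\]
If $f(x)=0$, this equals $f(u)/2$, which is $\sqrt{0}$ in $N$ by Proposition \ref{9.2.0}(ii) (since $N$ is representable and $f(u)/2\in N$). If $f(x)\neq 0$, I verify in the chain $N$ that $c:=(f(x)+f(u))/2$ realizes $\sqrt{f(x)}$: for (Sq1), $c\odot c=(2c-f(u))\vee 0=f(x)\vee 0=f(x)$; for (Sq2), any $y\in N$ with $y\odot y\le f(x)$ satisfies $(2y-f(u))\vee 0\le f(x)$, hence $2y\le f(x)+f(u)$ and $y\le c$. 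Thus $f(\sqrt{x})=\sqrt{f(x)}$ in either case.

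The principal obstacle is the closed-form step: one must rule out the possibility that $\sqrt{x}$ exists while $(x+u)/2$ does not lie in $G$, which is precisely what the trick of passing from $a$ to $a\vee (u/2)$ accomplishes by absorbing the ambiguity in the components where $x_i=0$. Everything else reduces to group-theoretic computation once $u/2$ is known to be central.
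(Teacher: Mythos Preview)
Your proof is correct and takes a route that inverts the paper's logical order. The paper argues directly in the chain $N$: from $f(\sqrt{x})\odot f(\sqrt{x})=f(x)$ and the linearity of $N$ it extracts $2f(\sqrt{x})=f(x)+f(u)$ when $f(x)\neq 0$, then verifies (Sq1)--(Sq2) for that element; the case $f(x)=0$ is handled separately via Proposition~\ref{9.2.0}(ii). You instead first establish the closed form $\sqrt{x}=(x+u)/2$ inside $M$ (your device of replacing $a=\sqrt{x}$ by $b=a\vee(u/2)$ to absorb the components where $x_i=0$ is exactly what makes this work), and only then push it through $f$. This is in effect a self-contained proof of the forward implication of Proposition~\ref{9.2.1}(ii), which the paper proves \emph{after} Lemma~\ref{9.3} and by appealing to it. What your approach buys is uniformity: you obtain $f(\sqrt{x})=(f(x)+f(u))/2$ without a case split, so the case $f(x)=0$ drops out automatically (the paper's treatment of that case records $\sqrt{0}=f(u)/2$ in $N$ but leaves the equality $f(\sqrt{x})=f(u)/2$ itself implicit---it needs $u/2\le\sqrt{x}$, which your argument has built in). What the paper's approach buys is brevity for this lemma on its own, deferring the closed form in $M$ to the next proposition.
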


\begin{proof}
Since $u/2$ is defined in $M$, clearly, $f(u)/2$ is defined in $N$.
Let $x\in M$ be such that $\sqrt{x}$ exists. If $f(x)\neq 0$, then $0<f(x)=f(\sqrt{x}\odot \sqrt{x})=f(\sqrt{x})\odot f(\sqrt{x})=
(f(\sqrt{x})-f(u)+f(\sqrt{x}))\vee 0$, so $f(x)=f(\sqrt{x})-f(u)+f(\sqrt{x})$ (since $N$ is a chain and symmetric).
It follows that $f(x)+f(u)=2f(\sqrt{x})$, that is $f(\sqrt{x})=(f(x)+f(u))/2$. Clearly,
$f(\sqrt{x})\odot f(\sqrt{x})=f(x)$. Let $y\in N$ be such that $y\odot y\leq f(x)$.
Then $2y\leq f(x)+f(u)=2f(\sqrt{x})$, consequently, $y\leq f(\sqrt{x})$. Thus, $\sqrt{f(x)}=f(\sqrt{x})$.

If $f(x)=0$, by Proposition \ref{9.2.0}(ii), in $N$ we have, $\sqrt{0}=f(u)/2$ and so $f(u/2)=f(u)/2=\sqrt{0}=\sqrt{f(x)}$.
\end{proof}

\begin{prop}\label{9.2.1}
Let $M=\Gamma(G,u)$ be a symmetric representable pseudo MV-algebra such that $u/2\in G$ and let $x\in M$. Then
\begin{itemize}[nolistsep]
\item[{\rm (i)}] $\sqrt{0}$ exists and $\sqrt{0}=u/2\in \C(G)$.
\item[{\rm (ii)}] $\sqrt{x}$ exists \iff $(x+u)/2\in M$, in either case, $\sqrt{x}=(x+u)/2$. In either case, $x/2\in M$. Moreover, if $M$ is a subdirect product of \{$\Gamma(G_i,u_i)\mid i\in I\}$, and $x=(x_i)_i\in M$, then $\sqrt{x}$ exists \iff $\sqrt{x_i}$ exists in $\Gamma(G_i,u_i)$ for each $i\in I$. In either case, $\sqrt{x}=(\sqrt{x_i})_i$.
\item[{\rm (iii)}] If $\sqrt{x}$ exists, then $\sqrt{x^-}$ exists in $M$, and $\sqrt{x^-}=\sqrt{x}\ra \sqrt{0}=\sqrt{x^\sim}$.
\item[{\rm (iv)}] If $\sqrt{x}$ and $\sqrt{y}$ exist, then $\sqrt{x\vee y}$ and $\sqrt{x\wedge y}$ exists in $M$, $\sqrt{x\vee y}=\sqrt{x}\vee \sqrt{y}$, and $\sqrt{x\wedge y}=\sqrt{x}\wedge\sqrt{y}$.
\item[{\rm (v)}] Let $\sqrt{x}$ and $\sqrt{y}$ exist. If $x\oplus y=y\oplus x$, then  $\sqrt{x\oplus y}=(\sqrt{x}\odot\sqrt{0}^-)\oplus \sqrt{y}$.
 \item[{\rm (vi)}] Let $\sqrt{x}$ and $\sqrt{y}$ exist. If $x+ y=y+ x$, then  $\sqrt{x\odot y}=(\sqrt{x}\odot \sqrt{y})\vee\sqrt{0}$.
 \item[{\rm (vii)}] $\sqrt{z\odot z}$ and $\sqrt{z\oplus z}$ exist for all $z\in M$.

\end{itemize}
\end{prop}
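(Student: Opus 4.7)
The plan is to prove (i) and (ii) first via reduction to linearly ordered components using the subdirect embedding from representability, and then to derive (iii)--(vii) by algebraic manipulation using the explicit formula $\sqrt{x}=x/2+u/2$ established in (ii). For (i), let $f:M\to\prod_{i\in I} M_i$ be a subdirect embedding into linearly ordered $M_i=\Gamma(G_i,u_i)$; each $M_i$ is symmetric (inherited from $M$) and $u_i/2=\pi_i(f(u/2))$ exists. The chain-case portion of the proof of Lemma~\ref{div01}(i) (which uses only symmetry of the chain and the presence of $u/2$, not an ambient square root on $M_i$) forces $u_i/2\in\C(G_i)$; centrality then lifts through $f$ to $u/2\in\C(G)$, and Proposition~\ref{9.2.0}(ii) delivers $\sqrt 0=u/2$.

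For (ii), the forward direction applies Lemma~\ref{9.3} componentwise: in both branches one has $\pi_i(\sqrt x)=(\pi_i(x)+u_i)/2$, so $2\sqrt x=x+u$ in $G$ and $(x+u)/2\in M$. The backward direction sets $a:=(x+u)/2$ and checks $a\odot a=(2a-u)\vee 0=x$ using $u\in\C(G)$ (which follows from $u/2\in\C(G)$ since $u=2(u/2)$); minimality $y\odot y\le x\Rightarrow y\le a$ reduces to the chain-level fact that $2y\le 2a$ implies $y\le a$. Also $x/2=a-u/2\in G$ and $0\le x/2\le x$ forces $x/2\in M$. The subdirect-product variant is automatic because existence and value of $\sqrt x$ are encoded by $(x+u)/2$, which commutes with projections. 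With (i)--(ii) in hand, items (iii), (iv), (vi), (vii) unwind into routine $\ell$-group computations using $\sqrt x=x/2+u/2=u/2+x/2$ and $\sqrt 0=\sqrt 0^-=u/2$: for instance, $\sqrt{x^-}=(2u-x)/2=u-x/2=(u/2-x/2)\oplus u/2=\sqrt{x}^-\oplus\sqrt 0$; $((x\vee y)+u)/2=((x+u)\vee(y+u))/2=\sqrt x\vee\sqrt y$ since halving commutes with lattice operations in a representable $\ell$-group (chainwise); $\sqrt{z\odot z}=((2z-u)\vee 0+u)/2=(2z\vee u)/2=z\vee u/2$. For (vi) the hypothesis $x+y=y+x$ in each chain forces $x/2,y/2$ to commute, since centralizers in torsion-free linearly ordered groups are closed under halving in the divisible hull, after which the identity follows directly.

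The main obstacle is (v). I would reduce to the chain case, where $x\oplus y=y\oplus x$ splits into (a) $x+y=y+x\le u$, treated as in (vi) with commuting halves, and (b) both $x+y\ge u$ and $y+x\ge u$, in which case $\sqrt{x\oplus y}=\sqrt u=u$ and the identity reduces to $(x/2+y/2+u/2)\wedge u=u$, i.e., to $x/2+y/2\ge u/2$. Writing $p=x/2,\ q=y/2$, this becomes the inequality $p+q+p+q\ge u$ in an arbitrary linearly ordered group, given $p+p+q+q\ge u$ and $q+q+p+p\ge u$. The proof is a case analysis on whether $p+q\ge q+p$ or $q+p\ge p+q$ in the chain: using the monotonicity $a\ge b\Leftrightarrow 2a\ge 2b$ of linearly ordered groups, in each case one compares $p+q+p+q$ with one of $p+p+q+q$ or $q+q+p+p$ after reshuffling letters past the given inequality, to conclude $p+q+p+q\ge u$. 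The subtlety is that $p+q\ge u/2$ does not follow from $p+p+q+q\ge u$ alone in the non-Abelian setting; both hypotheses must be combined, and the combinatorial comparison of the four words in $p,q$ is where the bulk of the technical work for (v) resides.
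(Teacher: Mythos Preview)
Your overall plan is correct and, for parts (i)--(iv), (vi), (vii), tracks the paper's proof closely: reduce to chains via the subdirect embedding, use Lemma~\ref{div01}(i) to get $u/2\in\C(G)$, establish $\sqrt{x}=(x+u)/2$ via Lemma~\ref{9.3}, and then compute. Your word-combinatorics argument for (v) also goes through (the case split on $p+q\lessgtr q+p$ works as you outline).

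The one place where you diverge from the paper is exactly the part you flag as the ``main obstacle.'' You assert that $p+q\ge u/2$ does not follow from $2p+2q\ge u$ alone and that both hypotheses $x+y\ge u$, $y+x\ge u$ must be combined via reshuffling words. In fact the paper shows it \emph{does} follow from $x+y\ge u$ alone, precisely because of the centrality of $u/2$ you already proved in (i): from $u-y\le x$ one halves in the chain to get $(u-y)/2\le x/2$, and since $u/2\in\C(G)$ one has $(u-y)/2=u/2-y/2$, whence $u/2\le x/2+y/2$. This makes your four-letter word comparison unnecessary. The paper also uses the same centrality trick to prove the auxiliary lemma you invoke for (vi): in a chain, $x+y=y+x$ forces $x/2$ and $y/2$ to commute (argue first that $x$ commutes with $y/2$, then halve $x$). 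So your route for (v) is sound but needlessly hard; exploiting $u/2\in\C(G)$ collapses case~(b) to a two-line computation and removes the ``bulk of the technical work'' you anticipated.
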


\begin{proof}
Let $(G,u)$ be a representable unital $\ell$-group, $\{(G_i,u_i)\mid i\in I\}$ be a family of linearly ordered unital $\ell$-groups, and $f:G\to \prod_{i\in I}G_i$ be a subdirect embedding. By Proposition \ref{9.2.0}(ii), $\sqrt{0}=u/2$. Let $\pi_i:\prod_{i\in I}G\to G_i$ be the $i$-th natural projection map, $i\in I$.  We denote $\pi_i\circ f$ simply by $f_i$.

(i) Choose $g\in G$. For each $i\in I$, from $u_i=f_i(u)=f_i(u/2+u/2)=f_i(u/2)+f_i(u/2)$, it follows that $f_i(u)/2$ exists in $G_i$ and $f_i(u)/2=f_i(u/2)$. By Proposition \ref{9.2.0}(ii), $\sqrt{0_i}$ exists in the symmetric pseudo MV-algebra $\Gamma(G_i,f_i(u))$ and $\sqrt{0_i}=f_i(u/2)$.
Since $G_i$ is linearly ordered, we have:
If $f_i(g)+f_i(u/2)\leq f_i(u/2)+f_i(g)$, then $f_i(u/2)+f_i(g)+f_i(u/2)\le f_i(u)+f_i(g)=f_i(g)+f_i(u)$, so $f_i(u/2)+f_i(g)\leq f_i(g)+f_i(u/2)$. Analogously, we proceed if $f_i(u/2)+f_i(g)\le f_i(g)+f_i(u/2)$. Thus, $u/2\in \C(G)$.
Therefore, $\sqrt{0}$ exists, $\sqrt{0}=u/2$, and belongs to $\C(G)$.

(ii) Let $x=(x_i)_i\in M$ be such that $(x+u)/2$ exists. Then $(x+u)/2\odot (x+u)/2=x$ and if $t\odot t\leq x$, then $2t-u\leq (t-u+t)\vee 0=t\odot t\leq x$ implies that
$2t\leq x+u$ and so $t\leq (x+u)/2$. That is, $\sqrt{x}=(x+u)/2=((x_i+u_i)/2)_i$.

Conversely, let $\sqrt{x}$ exist. Choose $i\in I$.
Due to Lemma \ref{9.3}, we have
$$
f_i(\sqrt{x})=(f_i(x)+f_i(u))/2= \sqrt{f_i(x)}.
$$
So, $f_i(2\sqrt{x})=(f_i(x)+f_i(u))=f_i(x+u)$ for all $i\in I$, consequently $2\sqrt{x}=(x+u)$ and so $\sqrt{x}=(x+u)/2$.

In addition, $x/2=(x+u)/2-u/2=\sqrt{x}-u/2\le u-u/2\le u$, so that $x/2\in M$.

From these ideas, we have also $\sqrt{x}$ exists in $M$ iff $\sqrt{x_i}$ exists in $\Gamma(G_i,u_i)$, and then $\sqrt{x}=(\sqrt{x_i})_i$.

(iii) $\sqrt{x}\ra \sqrt{0}=\big((u-(x+u)/2)+u/2\big)\wedge u=(u-x/2)\wedge u=u-x/2=(x^-+u)/2$. Hence by (ii),
$\sqrt{x^-}$ exists and is equal to $\sqrt{x}\ra \sqrt{0}$. Since $M$ is symmetric, $x^-=x^\sim$ and $\sqrt{x^-}=\sqrt{x^\sim}$.

(iv) Let $\sqrt{x}$ and $\sqrt{y}$ be defined. For the first part, it suffices to show that $((x\vee y)+u)/2$ exists and is equal to $\sqrt{x}\vee\sqrt{y}$.
According to \cite[1.1.10]{Anderson}, we have $(x\vee y)+u=(x+u)\vee (y+u)$. Since $(x+u)/2$ and $(y+u)/2$ exist, so
$((x\vee y)+u)/2=(x+u)/2 \vee (y+u)/2$ exists. Therefore, by (ii),
$\sqrt{x\vee y}=\sqrt{x}\vee\sqrt{y}$.

For the second part, we can use analogous statements as in the first one, or we can use (iii) and the first part.

(v) Due to (iii), it is enough to assume that $M$ is linearly ordered. First, we note that if $x+y=y+x$, then $x/2+y/2=x/2+x/2$. Indeed, since $M$ is a chain assume that $x+y/2\leq y/2+x$, then $y+x=x+y\leq y/2+x+y/2$ and so
$y/2+x\leq x+y/2$. That is $x+y/2=y/2+x$.  Similarly, using  $x+y/2=y/2+x$, we can prove that $x/2+y/2=y/2+x/2$.

Check
$(\sqrt{x}\odot \sqrt{0}^-)\oplus \sqrt{y}=((x+u)/2\odot u/2)\oplus (y+u)/2=((x+u)/2-u+ u/2)\oplus (y+u)/2=(x/2+(y+u)/2)\wedge u$.
If $u\leq x+y$, then $\sqrt{x\oplus y}=\sqrt{u}=u$. On the other hand,  $u-y\leq x$ and so $u/2-y/2=(u-y)/2\leq x/2$ which implies that $u/2\leq x/2+y/2$. Thus, $u\leq x/2+y/2+u/2=x/2+(y+u)/2$, consequently $(\sqrt{x}\odot \sqrt{0}^-)\oplus \sqrt{y}=(x/2+(y+u)/2)\wedge u=u$,
that is $\sqrt{x\oplus y}=u=(\sqrt{x}\odot \sqrt{0}^-)\oplus \sqrt{y}$. If $x\oplus y<u$, then $y\oplus x<u$, so by the assumption we have $x+y=x\oplus y=y\oplus x=y+x$.
Hence $(\sqrt{x}\odot \sqrt{0}^-)\oplus \sqrt{y}=(x/2+(y+u)/2)\wedge u=((x+y+u)/2)\wedge u=((x+y)+u)/2=((x\oplus y)+u)/2=\sqrt{x\oplus y}$, since $x+y=y+x$.

(vi) Again, we can assume $M$ is linearly ordered. By (i), we have
\begin{eqnarray*}
(\sqrt{x}\odot\sqrt{y})\vee\sqrt{0}&=&\big(((x+u)/2-u+(y+u)/2)\vee 0\big)\vee u/2=(x+y)/2\vee u/2\\
&=&((x+y-u)/2\vee 0)+u/2=\big(((x+y-u)\vee 0)/2\big)+u/2\\
&=&((x\odot y)+u)/2.
\end{eqnarray*}
Hence by (iii), $\sqrt{x\odot y}=(\sqrt{x}\odot\sqrt{y})\vee\sqrt{0}$.
Note that since $G$ is linearly ordered, for each $x\in G$ with $x/2\in G$, we have $x/2\vee 0=(x\vee 0)/2$.

(vii) By (iii), we assume $M$ is linearly ordered. Let $z\in M$. We have $(z\vee \sqrt{0})\odot (z\vee \sqrt{0})=(z\odot z)\vee (z\odot \sqrt{0})$. If $z\leq \sqrt{0}$, then
$z\odot z=0=(z\odot z)\vee (z\odot \sqrt{0})$. If $\sqrt{0}<z$, then $z\odot \sqrt{0}\leq z\odot z$ and so
$(z\odot z)\vee (z\odot \sqrt{0})=z\odot z$.
Now, let $y\in M$ such that $y\odot y\leq z\odot z$. If $y\odot y=0$, then $y\leq \sqrt{0}$ and so $y\leq z\vee \sqrt{0}$.
Otherwise, $0<y\odot y$ implies that $y\odot y=y-u+y=2y-u$ and $z\odot z=z-u+z=2z-u$. Hence
$2y-u\leq 2z-u$, so $2y\leq 2z$ consequently $y\leq z\leq z\vee \sqrt{0}$. Therefore, $\sqrt{z\odot z}=z\vee \sqrt{0}$.

Since $\sqrt{z^-\odot z^-}$ exists, then by (iv), $\sqrt{z\oplus z}=\sqrt{(z^-\odot z^-)^\sim}$ exists.
\end{proof}

Let $X$ be a subalgebra of a pseudo MV-algebra $M$. We say that $X$ has the {\it weak square root property} if $\sqrt x$ exists in $M$ for each $x\in X$, $\sqrt x\in X$. In this case, the mapping $r_X(x)=\sqrt x$, $x\in X$, is a weak square root on $X$.

We note that the Boolean skeleton $\B(M)$ of a pseudo MV-algebra has the weak square root property iff $\sqrt 0$ exists in $M$ and $\sqrt 0 = 0$. Indeed, let $\sqrt 0=0$. The Boolean algebra $\B(M)$ has the unique square root $r_{\B(M)}=\id_{B(M)}$. Let $b \in \B(M)$ and let $y\in M$ be such that $y\odot y \le b$. Then $(y\wedge b^-)\odot (y\wedge b^-)\le 0=\sqrt 0$, so that $y\wedge b^-\le 0$ and $y \le b$. This implies $\sqrt b$ exists in $M$, $\sqrt b=b$, and $\B(M)$ has the weak square root property.

The converse statement is clear because the only square root mapping on a Boolean algebra is the identity map.

\begin{thm}\label{9.4}
Let $M=\Gamma(G,u)$ be a representable symmetric pseudo MV-algebra such that $u/2\in G$ and define $X_1=\{x\in M\mid \sqrt{x}\mbox{ exists}\}$.
\begin{itemize}[nolistsep]
\item[{\rm (i)}] If $M$ is an MV-algebra, then $X_1$ is a subalgebra of $M$ and $X_1$ has the weak square root property.
\item[{\rm (ii)}] If $M$ is an MV-algebra and $u/2^n$ exists for all $n\in\mathbb N$, then the greatest subalgebra of $M$ with weak square root exits.
\item[{\rm (iii)}] We have $x\odot x\in X_1$ for all $x\in M$.
\end{itemize}
\end{thm}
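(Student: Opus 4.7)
The plan is to deduce each part of Theorem 9.4 from Proposition 9.2.1, together with an additional closure construction needed for part (ii).

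Part (iii) I would dispatch immediately: Proposition 9.2.1(vii) asserts that $\sqrt{z\odot z}$ exists for every $z\in M$ (in fact equals $z\vee\sqrt{0}$), so by definition $z\odot z\in X_1$.

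For part (i), every MV-algebra is commutative and representable, so the hypotheses of Proposition 9.2.1 are in force. I would verify closure of $X_1$ under each MV-algebra operation: the constants $0,1\in X_1$ using 9.2.1(i) (which gives $\sqrt{0}=u/2$) together with $\sqrt{1}=1$; closure under $^-$ follows from 9.2.1(iii); closure under $\oplus$ follows from 9.2.1(v), whose side condition $x\oplus y=y\oplus x$ holds automatically in the commutative setting. For the weak square root property of $X_1$, I would show $\sqrt{x}\in X_1$ whenever $x\in X_1$ by using the explicit formula $\sqrt{x}=(x+u)/2$ and verifying that $\sqrt{\sqrt{x}}=(x+3u)/4$ lies in $M$.

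For part (ii), the extra hypothesis $u/2^n\in M$ for every $n\in\mathbb N$ is essential. My proposal is to define
\[
X:=\{x\in M\mid x^{(n)} \text{ is defined in } M \text{ for every } n\ge 0\},
\]
where $x^{(0)}:=x$ and $x^{(n+1)}:=\sqrt{x^{(n)}}$. Then $X$ is closed under $\sqrt{\cdot}$ by construction, and hence satisfies the weak square root property. To verify that $X$ is a subalgebra, I would check: $0\in X$ because $0^{(n)}=u-u/2^n$, which lies in $M$ by the hypothesis; $1\in X$ trivially; closure under $^-$ and $\oplus$ follows by iterating Proposition 9.2.1(iii) and 9.2.1(v), respectively. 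Maximality is then automatic: if $Y\subseteq M$ is any subalgebra with the weak square root property, then for each $y\in Y$ all iterates $y^{(n)}$ lie in $Y\subseteq M$, forcing $Y\subseteq X$.

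The main technical obstacle will be establishing closure of $X$ under $\oplus$ in part (ii). Proposition 9.2.1(v) expresses $\sqrt{x\oplus y}=(\sqrt{x}\odot\sqrt{0}^-)\oplus\sqrt{y}$, a formula whose constituent values must themselves admit all further iterates in $M$. Handling this requires a simultaneous induction on the closure of $X$ under $\oplus$, $\odot$, $^-$, and $\sqrt{\cdot}$, with the hypothesis $u/2^n\in M$ invoked at each stage to ensure that the iterated images of $\sqrt{0}$ remain available.
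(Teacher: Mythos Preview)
Your treatment of (iii) and the subalgebra portion of (i) matches the paper exactly: both use Proposition~9.2.1(i),(iii),(v),(vii) in the same way.

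There is, however, a genuine gap in your argument for the \emph{weak square root property} in (i). You propose to show $\sqrt{x}\in X_1$ by checking that $\sqrt{\sqrt{x}}=(x+3u)/4$ lies in $M$. But the hypothesis of (i) is only $u/2\in G$; nothing forces $u/4\in G$, so $(x+3u)/4$ need not even be defined. Concretely, take $M=M_2=\Gamma(\tfrac{1}{2}\mathbb Z,1)$: here $X_1=\{0,1\}$, yet $\sqrt{0}=1/2\notin X_1$, so $X_1$ is not closed under $\sqrt{\phantom{x}}$. The paper's own proof of (i) establishes only that $X_1$ is a subalgebra and says nothing further; the phrase ``$X_1$ has the weak square root property'' in the statement appears to be intended in the trivial sense that $\sqrt{x}$ exists in $M$ for every $x\in X_1$ (immediate from the definition of $X_1$), not that $\sqrt{x}\in X_1$. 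You are trying to prove more than is actually claimed, and what you are trying to prove is false.

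For (ii) your construction via iterated square roots $x^{(n)}$ is equivalent in spirit to the paper's, but the paper organizes the argument differently and thereby avoids the technical obstacle you flag. Rather than verifying directly that your set $X$ is closed under $\oplus$ (which, as you note, forces a simultaneous induction through the formula $\sqrt{x\oplus y}=(\sqrt{x}\odot\sqrt{0}^-)\oplus\sqrt{y}$), the paper defines $X_{n+1}$ as the ``$X_1$'' of the MV-algebra $X_n$ and invokes (i) at each stage to conclude $X_{n+1}$ is a subalgebra of $X_n$; then $X=\bigcap_n X_n$ is automatically a subalgebra as an intersection of subalgebras. The hypothesis $u/2^n\in M$ is used precisely to ensure that $u/2$ lies in each $X_n$, so that (i) applies. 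This iterative packaging is cleaner and sidesteps the bookkeeping you anticipate.
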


\begin{proof}
(i) By Proposition \ref{9.2.1}(i), $0\in X_1$, so $0,1\in X_1$ (recall that $\sqrt{1}=1$).
Also, by Proposition \ref{9.2.1}(iii) and (iv), $X_1$ is closed under the operations $\vee$, $\wedge$ and $'$.
In addition, since $\oplus$ is commutative, Proposition \ref{9.2.1}(v) implies that $X_1$ is closed under $\oplus$.
Therefore, $X_1$ is a subalgebra of $M$.

(ii) Let  $u/2^n$ exist for all $n\in\mathbb N$.
Set $X_{n+1}:=\{x\in X_n\mid \sqrt{x}\mbox{ exists}\}$ for all $n\in\mathbb N$. Using induction and reasoning  similar to (i), $X_{n+1}$ is a subalgebra of $X_n$ and so
$X:=\bigcap_{n\in\mathbb N}X_n$ is a subalgebra of $M$. Clearly, $X$ is a pseudo MV-algebra with square root. It is the greatest
subalgebra of $M$ with square root.

(iii) Let $x\in M$. Use a standard subdirect decomposition to linear factors. By Proposition \ref{9.2.1}(vi), $\pi_i(f(x))\vee \sqrt{0_i}$ is a square root of $\pi_i(f(x))\odot \pi_i(f(x))=\pi_i(f(x\odot x))$. It follows that for each $i\in I$, we have
\begin{eqnarray*}
\pi_i(f(x\odot x))&=&\big(\pi_i(f(x))\vee \sqrt{0_i} \big)\odot \big(\pi_i(f(x))\vee \sqrt{0_i} \big)\\
&=& \big(\pi_i(f(x))\vee \pi_i(f(\sqrt{0})) \big)\odot \big(\pi_i(f(x))\vee \pi_i(f(\sqrt{0})) \big) \quad \mbox{ by Lemma \ref{9.3}}\\
&=& \pi_i\circ f\big((x\vee \sqrt{0})\odot (x\vee \sqrt{0}) \big).
\end{eqnarray*}
Hence, $f(x\odot x)=f\big((x\vee \sqrt{0})\odot (x\vee \sqrt{0}) \big)$, and so
$x\odot x=(x\vee \sqrt{0})\odot (x\vee \sqrt{0})$. On the other hand, if $y\in M$ such that $y\odot y\leq x\odot x$, then
$\pi_i(f(y))\odot \pi_i(f(y))\leq \pi_i(f(x))\odot \pi_i(f(x))$. Proposition \ref{9.2.1}(vii) and Lemma \ref{9.3} entail
$\pi_i(f(y))\leq \pi_i(f(x))\vee \pi_i(f(\sqrt{0}))=\pi_i(f(x\vee \sqrt{0}))$ for all $i\in I$ which implies that
$f(y)\leq f(x\vee \sqrt{0})$ and so $y\leq x\vee \sqrt{0}$. Therefore, $\sqrt{x\odot x}$ exists and is equal to
$x\vee \sqrt{0}$. That is, $x\odot x\in X_1$.
\end{proof}

To illustrate the latter result, we have that if $M=M_3$, then $u/2\notin M$, and $X_1=\{0,1/3,1\}$ is not a subalgebra of $M$. The same is true for $M=M_{2n-1}$, where $u/2\notin M_{2n-1}$, $\sqrt{1/(2n-1)} = n/(2n-1)$, so $1/(2n-1)\in X_1=\{0,1/(2n-1), 3/(2n-1),\ldots,(2n-1)/(2n-1)\}$, but $(1/(2n-1))'=2(n-1)/(2n-1)\notin X_1$, and $X_1$ is not a subalgebra. On the other side, if $M=M_{2n}$, then $X_1=\{0,2/2n,4/2n,\ldots,2n/2n\}$ which is a subalgebra on $M$.

A close connection exists between the square root of $0$ and the square root of idempotent elements of a pseudo MV-algebra. Indeed, in a pseudo MV-algebra $M$, $\sqrt{0}$ exists \iff $\sqrt{b}$ exists for all $b\in\B(M)$:

\begin{prop}\label{9.5}
Let $(M;\oplus,^-,^\sim,0,1)$ be a pseudo MV-algebra such that $\sqrt{0}$ exists. Then:
\begin{itemize}[nolistsep]
\item[{\rm (i)}] If $\sqrt{x}$ exists, then $\sqrt{x}\leq (x\oplus\sqrt{0})\wedge (\sqrt{0}\oplus x)$.
\item[{\rm (ii)}] For all $b\in\B(M)$, $\sqrt{b}$ exists and $\sqrt{b}=b\vee \sqrt{0}$.
\item[{\rm (iii)}] If $\sqrt{0}=0$, then $\sqrt{b}=b$ for all $b\in\B(M)$. In addition, if $x\in M$ is such that $\sqrt{x}$ exists, then $x\in \B(M)$.
\end{itemize}
\end{prop}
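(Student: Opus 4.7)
The plan is to handle the three parts in order, relying on Proposition~\ref{9.2.0}(iii) for part (i), a direct computation plus a Boolean-component argument for part (ii), and a tight application of (i) for part (iii).

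For part (i), the inequality $\sqrt{x}\leq (x\oplus\sqrt{0})\wedge(\sqrt{0}\oplus x)$ is already proved as Proposition~\ref{9.2.0}(iii), so I would simply cite that statement. The only role of (i) in the present proposition is to make it available on the same footing as (ii) and (iii); no new argument is needed.

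For part (ii), let $b\in\B(M)$. I would verify that $b\vee\sqrt{0}$ satisfies (Sq1) and (Sq2). For (Sq1), I would expand $(b\vee\sqrt{0})\odot(b\vee\sqrt{0})$ using the two-sided distributivity of $\odot$ over $\vee$, which is standard in pseudo MV-algebras, obtaining $(b\odot b)\vee(b\odot\sqrt{0})\vee(\sqrt{0}\odot b)\vee(\sqrt{0}\odot\sqrt{0})$. Since $b$ is Boolean, $b\odot b=b$; by hypothesis $\sqrt{0}\odot\sqrt{0}=0$; and since $a\odot c\leq a\wedge c\leq a$, both mixed terms are dominated by $b$. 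Hence the join collapses to $b$. For (Sq2), suppose $y\odot y\leq b$. The key observation is that $M\cong[0,b]\times[0,b^-]$ via $x\mapsto(x\wedge b,x\wedge b^-)$ by \cite[Thm 4.3]{DvZa3}, so it suffices to bound $y\wedge b^-$ by $\sqrt 0$. Using monotonicity of $\odot$,
\[
(y\wedge b^-)\odot(y\wedge b^-)\leq (y\odot y)\wedge(b^-\odot b^-)\leq b\wedge b^-=0,
\]
and then (Sq2) for $\sqrt{0}$ gives $y\wedge b^-\leq\sqrt{0}$. Therefore $y=(y\wedge b)\vee(y\wedge b^-)\leq b\vee\sqrt{0}$, completing (Sq2).

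For part (iii), the forward implication is immediate from (ii): if $\sqrt{0}=0$, then $\sqrt{b}=b\vee 0=b$. For the second assertion, suppose $\sqrt{x}$ exists. By part (i), $\sqrt{x}\leq(x\oplus\sqrt{0})\wedge(\sqrt{0}\oplus x)=(x\oplus 0)\wedge(0\oplus x)=x$. On the other hand, since $a\odot a\leq a\wedge a=a$ in any pseudo MV-algebra, we have $x=\sqrt{x}\odot\sqrt{x}\leq\sqrt{x}$. Hence $x=\sqrt{x}$, so $x\odot x=x$, which characterizes Boolean elements, giving $x\in\B(M)$.

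The only subtle step is the (Sq2) verification in part (ii), where I use that $b\wedge b^-=0$ for Boolean $b$ together with the monotonicity inequality $(y\wedge b^-)\odot(y\wedge b^-)\leq(y\odot y)\wedge(b^-\odot b^-)$; once the meet is isolated, the hypothesis that $\sqrt{0}$ is a square root of $0$ in the sense of (Sq2) does the rest. No non-commutative subtlety intervenes because all the elements involved in the critical meet lie under the central Boolean element $b$ or its complement.
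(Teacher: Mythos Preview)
Your proof is correct and, for parts (i) and (iii), matches the paper's argument essentially verbatim (the paper re-derives (i) in place rather than citing Proposition~\ref{9.2.0}(iii), but the computation is identical). The one genuine difference is in the (Sq2) verification of part (ii): you use the Boolean decomposition $y=(y\wedge b)\vee(y\wedge b^-)$ and bound the $b^-$-component by $\sqrt{0}$, whereas the paper simply says ``from (i)'', meaning that the computation behind (i) applies verbatim to any $z$ with $z\odot z\leq b$ (not just to $\sqrt{b}$) and yields $z\leq b\oplus\sqrt{0}=b\vee\sqrt{0}$, the last equality holding because $b$ is Boolean. Your route is a bit more self-contained and avoids needing the identity $b\oplus\sqrt{0}=b\vee\sqrt{0}$; the paper's route is shorter once one realizes the argument of (i) never used the maximality of $\sqrt{x}$.

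One minor correction: your citation of \cite[Thm~4.3]{DvZa3} for $M\cong[0,b]\times[0,b^-]$ is off, since that theorem concerns the specific decomposition arising from a square root. The fact you actually use---that for any Boolean $b$ one has $y=(y\wedge b)\vee(y\wedge b^-)$---is the standard Boolean decomposition (cf.\ Remark~2.2 in the first part of the paper); you do not even need the full product isomorphism, only this lattice identity together with monotonicity of $\odot$.
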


\begin{proof}
(i) Let $x\in M$ and $\sqrt{x}$ exist. Then $\sqrt{x}\odot\sqrt{x}=x$ implies that
$(\sqrt{x}\odot \sqrt{x})\odot x^-=0$ and $x^\sim \odot (\sqrt{x}\odot \sqrt{x})=0$. Hence,
$(\sqrt{x}\odot x^-)\odot (\sqrt{x}\odot x^-)=0$ so
$\sqrt{x}\odot x^-\leq \sqrt{0}$ and $x^\sim\odot\sqrt{x}\leq \sqrt{0}$. Thus $\sqrt{x}= x\vee \sqrt{x}=x\oplus (x^\sim \odot \sqrt{x})\leq x\oplus \sqrt{0}$. Similarly, we can show that $\sqrt{x}\leq \sqrt{0}\oplus x$.

(ii) Let $b\in \B(M)$. The condition (Sq1) holds, because $(b\vee \sqrt{0})\odot (b\vee \sqrt{0})=b\vee (b\odot\sqrt{0})=b$.
From (i), we conclude that $\sqrt{b}=b\vee\sqrt{0}$.

(iii) It is a straight consequence of (i) and (ii).
\end{proof}

\begin{cor}\label{9.6}
Consider the assumptions of Theorem {\rm \ref{9.4}}. Then $\B(M)\s X$, where $X:=\bigcap_{n\in\mathbb N}X_n$ is the greatest subalgebra of $M$ with square root.
\end{cor}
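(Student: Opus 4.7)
My plan is to introduce the auxiliary subset
\[ Y := \{x \in M \colon x/2^n \in M \text{ for every } n \in \mathbb N\}, \]
well-defined because a representable $\ell$-group enjoys unique extraction of roots, and then to establish the chain $\B(M) \subseteq Y \subseteq X$. I will split this into three steps: (a) $\B(M) \subseteq Y$; (b) $Y$ is closed under the partial operation $x \mapsto \sqrt{x}$; and (c) an induction giving $Y \subseteq X_n$ for every $n$, whence $Y \subseteq X = \bigcap_n X_n$.

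For step (a), I fix $b \in \B(M)$ and $n \in \mathbb N$ and prove that $c_n := b \wedge (u/2^n) \in M$ realizes $b/2^n$. Passing to a subdirect embedding $G \hookrightarrow \prod_{i \in I} G_i$ with each $G_i$ linearly ordered (available by representability), I exploit that homomorphisms preserve Booleanicity and that $\B(M_i) = \{0, u_i\}$ in any linearly ordered factor, so each projection $b_i$ is $0$ or $u_i$. A short case split then yields $c_{n,i} \in \{0, u_i/2^n\}$ with $2^n c_{n,i} = b_i$ in both cases; injectivity of the embedding therefore forces $2^n c_n = b$ in $G$, proving $b \in Y$.

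For step (b), if $x \in Y$, then $x/2 \in M$, and Proposition~\ref{9.2.1}(ii) gives $\sqrt{x} = (x+u)/2 = x/2 + u/2 \in M$. For each $n \geq 1$, I will show $\sqrt{x}/2^n = x/2^{n+1} + u/2^{n+1}$, which lies in $G$ (both summands do) and is bounded above by $u/2^n \leq u$, so $\sqrt{x} \in Y$. This computation uses that $u/2^n \in \C(G)$ for every $n$: starting from $u/2 \in \C(G)$ (Proposition~\ref{9.2.1}(i)) and working in the linearly ordered factors, a H\"older-type argument (in a linearly ordered group, $2a$ central forces $ag = ga$ by pitting $a^2 g = g a^2$ against the linear order) iterates to give $u_i/2^n \in \C(G_i)$ for every $n$, hence $u/2^n \in \C(G)$ by representability.

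For step (c), $Y \subseteq X_1$ is immediate from Proposition~\ref{9.2.1}(ii); and if $Y \subseteq X_n$, then for $x \in Y$ one has $\sqrt{x} \in Y \subseteq X_n$, so $x \in X_{n+1}$ by the definition in Theorem~\ref{9.4}(ii). Hence $Y \subseteq X_n$ for all $n$, and $\B(M) \subseteq Y \subseteq X$ as required. The main delicate point is step~(a), where representability is genuinely used to reduce the existence of $b/2^n$ in $M$ to the trivial Boolean skeletons of the linearly ordered factors; the centrality claim in step~(b) is the only other mildly technical input.
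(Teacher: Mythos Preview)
Your proof is correct, but it takes a different route from the paper's. The paper's argument, though compressed to a single line (``It follows from Theorem~\ref{9.4} and Proposition~\ref{9.5}''), rests on the explicit formula of Proposition~\ref{9.5}(ii): for any $b \in \B(M)$ the square root exists in $M$ and equals $b \vee \sqrt{0}$. Iterating with Proposition~\ref{9.2.1}(iv), the $n$-fold square root of $b$ is $b \vee r^n(0)$, where $r^n(0) = (2^n-1)u/2^n$ exists in $M$ precisely because $u/2^n$ does; hence $b \in X_n$ for every $n$. No auxiliary set is needed, and the whole computation never leaves the orbit $\{\,b \vee r^k(0) : k \geq 0\,\}$.

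Your approach instead factors through the set $Y = \{x \in M : x/2^n \in M \text{ for all } n\}$ of ``infinitely two-divisible'' elements, and hinges on Proposition~\ref{9.2.1}(ii) (the equivalence between existence of $\sqrt{x}$ and $(x+u)/2 \in M$) rather than on Proposition~\ref{9.5}. This is longer---you must identify $b/2^n$ with $b \wedge (u/2^n)$ via subdirect reduction to linear factors, and separately argue centrality of each $u/2^n$---but it yields a slightly stronger intermediate conclusion: every $x \in M$ with $x/2^n \in M$ for all $n$ lies in $X$, not merely the Boolean elements. The paper's route is more economical for this corollary; yours gives a group-theoretic description of a (potentially larger) subset of $X$ that may be of independent interest.
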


\begin{proof}
It follows from Theorem \ref{9.4} and Proposition \ref{9.5}.
\end{proof}

\section{Conclusion}

Square roots on MV-algebras were generalized in \cite{DvZa3} to square roots on pseudo MV-algebras. In the present study, which is divided into two parts, we extended results on square roots and weak square roots.

Part I. We are interested in new characterizations of square roots with accent to strict square roots; see Theorem 3.3. We showed that a strict square root implies that the corresponding $\ell$-group is two-divisible, Theorem 4.1, Theorem 4.4, and Theorem 4.9. We characterized semisimple MV-algebras with strict square root; see Theorem 4.17.

Part II. We exhibited situations when a pseudo MV-algebra can be embedded into a pseudo MV-algebra with square root, see, e.g., Theorem \ref{div1}. We defined a strict square root closure of a pseudo MV-algebra, showing that every MV-algebra has a strict square root closure, Theorem \ref{div7}. We also introduced a square root closure of an MV-algebra, Theorem \ref{n10}, and we compared both notions of closures.
Finally, we have investigated the existence of square roots not for all elements but only for some. We showed that in some symmetric representable pseudo MV-algebra such that one-half of the top element exists, there is a maximal subalgebra with the weak square root property; see Theorem \ref{9.4}.

The research brought some interesting questions like (1) the existence of a strict square root closure for some pseudo MV-algebras that are not commutative, for example, for linearly ordered ones, and (2) the existence of a maximal subalgebra with square root that is not weak.


\begin{thebibliography}{AnCo}

\bibitem[AnFe]{Anderson} M. Anderson and T. Feil, {\it ``Lattice-Ordered Groups: An Introduction"},
Springer Science and Business Media, USA, 1988. https://doi.org/10.1007/978-94-009-2871-8

\bibitem[CDM]{CDM}
R. Cignoli, I.M.L. D'Ottaviano and D. Mundici, {\it ``Algebraic Foundations of Many-Valued Reasoning"},
Springer Science and Business Media, Dordrecht, 2000.
https://doi.org/10.1007/978-94-015-9480-6

\bibitem[Dar]{Dar} M. Darnel, {\it ``Theory of Lattice-Ordered Groups"}, {\it CRC Press}, USA, 1994.

\bibitem[DiLe]{DiLe} A. Di Nola, A. Lettieri, {\it Coproduct MV-algebras, nonstandard reals, and Riesz spaces}, J. Algebra
{\bf 185}(3) (1996), 605--620. https://doi.org/10.1006/jabr.1996.0342

\bibitem[DiSe]{DiSe} A. Di Nola, S. Sessa, {\it On MV-algebras of continuous functions},
In: U. H{\"o}hle, E.P. Klement (eds), Non-Classical Logics and Their Applications to Fuzzy Subsets: A Handbook of the Mathematical Foundations of Fuzzy Set Theory, Vol {\bf 32}, Part A, Chapter: II, pp. 23--33, Springer, Dordrecht, 1995.
https://doi.org/10.1007/978-94-011-0215-5\_3

\bibitem[Dvu2]{Dvu2}
A. Dvure\v censkij, {\it States on pseudo MV-algebras},  Studia Logica {\bf 68}  (2001), 301--327. \\
https://doi.org/10.1023/A:1012490620450

\bibitem[DvRi]{DvRi}
A. Dvure\v{c}enskij, B. Rie\v{c}an, {\it Weakly divisible $MV$-algebras and product},
J. Math. Anal. Appl. {\bf 234} (1999), 208--222.

\bibitem[DvZa2]{DvZa2}
A. Dvure\v censkij, O. Zahiri, {\it On EMV-algebras with square roots}, J. Math. Anal. Appl. {\bf 524} (2023), Art. Num 127113. https://doi.org/10.1016/j.jmaa.2023.127113

\bibitem[DvZa3]{DvZa3}
A. Dvure\v censkij, O. Zahiri, {\it Some results on pseudo MV-algebras with square roots}, Fuzzy Sets and Systems {\bf 465} (2023), Art. Num 108527.\\ https://doi.org/10.1016/j.fss.2023.108527

\bibitem[DvZa4]{DvZa4}
A. Dvure\v censkij, O. Zahiri, {\it Representation and embedding of pseudo MV-algebras with square roots I. Strict square roots},

\bibitem[Fuc]{Fuc} L. Fuchs, {\it Partially Ordered Algebraic Systems}, Pergamon Press, Oxford--New York, 1963.

\bibitem[GeIo]{georgescu}
G. Georgescu and A. Iorgulescu, {\it Pseudo MV-algebras}, Multiple-Valued Logics {\bf 6} (2001), 193--215.

\bibitem[Hol]{Ho}
C. Holland, {\it The lattice-ordered groups of automorphisms of an ordered set}, Michigan Math. J. {\bf 10} (1963), 399--408.
http://gdmltest.u-ga.fr/item/1028998976/

\bibitem[H{\"o}l]{Hol} U. H{\"o}hle, {\it Commutative, residuated 1—monoids}. In: U. H{\"o}hle., E.P. Klement (eds), Non-Classical Logics and their Applications to Fuzzy Subsets: A Handbook of the Mathematical Foundations of Fuzzy Set Theory, Vol {\bf 32}, pp. 53--106. Springer, Dordrecht, 1995.
https://doi.org/10.1007/978-94-011-0215-5\_5

\bibitem[KoMe]{KoMe}
V.M. Kopytov, N.Ya. Medvedev, {\it ``The Theory of Lattice-Ordered Groups"}, Kluwer Academic Publication, Dordrecht, 1994. https://doi.org/10.1007/978-94-015-8304-6

\bibitem[LaLe]{LaLe}
S. Lapenta, I. Leu\c{s}tean, {\it Notes on divisible MV-algebras}, Soft Computing {\bf 21} (2017), 6213--6223.
\end{thebibliography}
\end{document}